\newsavebox{\@brx}
\newcommand{\llangle}[1][]{\savebox{\@brx}{\(\m@th{#1\langle}\)}%
  \mathopen{\copy\@brx\kern-0.5\wd\@brx\usebox{\@brx}}}
\newcommand{\rrangle}[1][]{\savebox{\@brx}{\(\m@th{#1\rangle}\)}%
  \mathclose{\copy\@brx\kern-0.5\wd\@brx\usebox{\@brx}}}
\DeclarePairedDelimiter\floor{\lfloor}{\rfloor}
\newtheorem{thmx}{Theorem}
\numberwithin{equation}{section}
\definecolor{plum}{rgb}{0.8,0.2,0.8}
\title{Applications of the Ohsawa-Takegoshi Extension Theorem to Direct Image Problems}
\date{\today}
\author{Ya Deng}
\address{Institut de Recherche Math\'ematique Avanc\'ee\\
	Universit\'e de Strasbourg}
\email{ deng@math.unistra.fr;\ dengya.math@gmail.com}
\begin{document}
\maketitle

\newtheorem{thm}{Theorem}[section]
\newenvironment{thmbis}[1]
{\renewcommand{\thethm}{\ref{#1}\emph{bis}}%
	\addtocounter{thm}{-1}%
	\begin{thm}}
	{\end{thm}}

\newtheorem{rem}[thm]{Remark}
\newtheorem{problem}[thm]{Problem}
\newtheorem{conjecture}[thm]{Conjecture}
\newtheorem{lem}[thm]{Lemma}
\newtheorem{cor}[thm]{Corollary}
\newtheorem{dfn}[thm]{Definition}
\newtheorem{proposition}[thm]{Proposition}
\newtheorem{example}[thm]{Example}
\newtheorem{propx}[thmx]{Proposition}
\newtheorem{corx}[thmx]{Corollary}

\def\oc{\mathscr{O}} \def\oce{\mathcal{O}_E} \def\xc{\mathcal{X}}
\def\ac{\mathcal{A}} \def\rc{\mathcal{R}} \def\mc{\mathcal{M}}
\def\wc{\mathcal{W}} \def\fc{\mathcal{F}} \def\cf{\mathcal{C_F^+}}
\def\jc{\mathscr{J}}
\def\ic{\mathcal{I}}
\def\kc{\mathcal{K}}
\def\lc{\mathcal{L}}
\def\vc{\mathcal{V}}
\def\xc{\mathcal{X}}
\def\yc{\mathcal{Y}}
\def\af{\mathbf{a}}
\def\df{\mathbf{d}}

\def\cb{\mathbb{C}}
\def\ib{\mathbb{I}}
\def\ab{\mathbb{A}}
\def\vol{\operatorname{vol}}
\def\ord{\operatorname{ord}}
\def\Im{\operatorname{Im}}
\def\dm{\mathrm{d}}

\def\as{{a^\star}} \def\es{e^\star} \def\cs{\mathscr{C}}

\def\tl{\widetilde} \def\tly{{Y}} \def\om{\omega}

\def\cb{\mathbb{C}} \def\nb{\mathbb{N}}
\def\gb{\mathbb{G}} \def\nbs{\mathbb{N}^\star}
\def\pb{\mathbb{P}} \def\pbe{\mathbb{P}(E)} \def\rb{\mathbb{R}}
\def\zbb{\mathbb{Z}}
\def\ys{\mathscr{Y}}
\def\ls{\mathscr{L}}
\def\hs{\mathscr{H}}
\def\xs{\mathscr{X}}
\def\zs{\mathscr{Z}}
\def\grs{{\rm{Gr}}_{k+1}(V)}
\def\ebf{\mathbf{e}}
\def\yf{\mathfrak{Y}}
\def\gf{\mathbf{G}}
\def\bff{\mathbf{b}}
\def\wk{\mathfrak{w}}
\def\ir{{\rm i}}

\def\hb{\bold{H}} \def\fb{\bold{F}} \def\eb{\bold{E}}
\def\pbb{\bold{P}}

\def\nab{\overline{\nabla}} \def\n{|\!|} \def\spec{\textrm{Spec}\,}
\def\cinf{\mathcal{C}_\infty} \def\d{\partial}
\def\db{\overline{\partial}}
\def\hess{\sqrt{-1}\partial\overline{\partial}}
\def\zb{\overline{z}} \def\lra{\longrightarrow}
\def\tb{\overline{t}}
\def\sn{\sqrt{-1}}
\begin{abstract}
In the first part of the paper,   we study a Fujita-type conjecture by Popa and Schnell, and give an effective bound on the generic global generation of the direct image of the twisted pluricanonical bundle. We also point out the relation between the Seshadri constant and the optimal bound. In the second part, we give  an affirmative answer to a question by Demailly-Peternell-Schneider in a more general setting. As an application, we generalize the theorems by Fujino and Gongyo on  images of weak Fano manifolds to the Kawamata log terminal cases, and refine a result by Broustet and Pacienza on the rational connectedness of the image.
\end{abstract}
\tableofcontents

\section{\sc Introduction}\label{introduction}
The first goal of this paper  is to study the following conjecture by Popa and Schnell on the global generation of pushforwards of pluricanonical bundles twisted by ample line bundles. 
\begin{conjecture}(Popa-Schnell)\label{PS}
	Let $f:X\rightarrow Y$ be a surjective morphism of smooth
	projective varieties,  with ${\rm dim}(Y)=n$, and let $L$ be an ample line bundle on $Y$. Then, for every
	$k\geq 1$, the sheaf 
	$$f_*(K_X^{\otimes k})\otimes L^l$$
	is globally generated for
	any $l\geq k(n+1)$.
\end{conjecture}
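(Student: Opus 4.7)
The plan is to establish pointwise global generation at every $y_0\in Y$ by producing sections of $H^0(X,K_X^{\otimes k}\otimes f^*L^l)$ whose restrictions to the fiber $X_{y_0}$ generate the stalk of $f_*(K_X^{\otimes k})$ at $y_0$. The central tool will be the Ohsawa--Takegoshi extension theorem in its pluricanonical form. At a point $y_0$ over which $f$ is smooth, I would start from a section $s\in H^0(X_{y_0},K_{X_{y_0}}^{\otimes k})$ and apply a codimension-one extension iteratively $n$ times to obtain $\tilde s\in H^0(X,K_X^{\otimes k}\otimes f^*L^l)$, using the pullback of a suitable singular metric on $L^l$ to furnish the curvature positivity required at each step.

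To execute this extension one needs a singular Hermitian metric on $f^*L^l$ whose multiplier ideal sheaf cuts out the fiber $X_{y_0}$, yet whose curvature remains semi-positive globally. Such a metric is produced by pulling back a singular metric on $L^l$ built from sections of a high power of $L$ vanishing to order $n$ at $y_0$: with $l\geq n+1$, the ampleness of $L$ permits a weight $\varphi$ on $L^l$ that behaves like $n\log|z|$ near $y_0$ in local coordinates, while $\sqrt{-1}\partial\overline{\partial}\varphi + l\,c_1(L,h_L)\geq 0$ holds globally. The factor $k$ then enters because of the pluricanonical power: to apply OT one must additionally equip $K_X^{\otimes (k-1)}$ with a semi-positively curved singular metric, which I would build via a Berndtsson--P\u{a}un positivity-of-direct-images bootstrap that charges an additional $L^{(k-1)(n+1)}$ to dominate its curvature. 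Adding the two contributions yields the bound $l\geq k(n+1)$, and the $L^2$ estimate from OT guarantees that $\tilde s|_{X_{y_0}} = s$ up to the prescribed trivialization of $f^*L^l$ along the fiber.

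The remaining difficulty is the case when $y_0$ is a critical value of $f$, so that $X_{y_0}$ is singular and OT does not apply directly to the fiber. I would handle this by passing to a log-resolution $\widetilde X\to X$ of a suitable boundary, transferring the problem to the resolution, and invoking the invariance of plurigenera of Siu and P\u{a}un to propagate sections from smooth neighboring fibers to the degenerate fiber $X_{y_0}$. The main obstacle I anticipate is achieving the \emph{sharp} bound $l = k(n+1)$ rather than $l = k(n+1) + \varepsilon$: OT arguments with singular weights generically introduce such perturbations, and running the Berndtsson--P\u{a}un bootstrap on $K_X^{\otimes(k-1)}$ without spawning additional positivity requirements will demand a delicate optimization of the singular weight $\varphi$ (saturating the multiplier ideal $\mathcal{J}(\varphi) = \mathfrak{m}_{y_0}$ on the nose) together with a precise accounting of the multiplier ideals near the singular fibers, so that the iterative extension does not accumulate error.
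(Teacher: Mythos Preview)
The statement you are attempting to prove is labeled \emph{Conjecture} in the paper, and the paper does \emph{not} prove it; it remains open. What the paper actually establishes is Theorem~\ref{mainot} (global generation at a regular value $y$ with the bound $l\geq k(\lfloor n/\epsilon(L,y)\rfloor+1)$ depending on the Seshadri constant), together with the generic statements in Proposition~\ref{sesha main} and Theorem~\ref{sharp bound}. So there is no ``paper's own proof'' to compare against; the relevant question is whether your outline would close the conjecture.

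It does not, and the gap is precisely the step where you assert that ``with $l\geq n+1$, the ampleness of $L$ permits a weight $\varphi$ on $L^l$ that behaves like $n\log|z|$ near $y_0$ while $\sqrt{-1}\partial\bar\partial\varphi + l\,c_1(L,h_L)\geq 0$ globally.'' This is exactly the assertion that $\gamma(L^l,y_0)>n$, equivalently (by Theorem~\ref{Demailly constant}) that $\epsilon(L,y_0)>n/l$. Ampleness alone does not give this: the Seshadri constant $\epsilon(L,y_0)$ can be arbitrarily small at a given point, and even at very general points the best known lower bound is $1/n$ (Theorem~\ref{EKL}). Hence with $l=n+1$ the required logarithmic weight simply need not exist, and the Ohsawa--Takegoshi step cannot be run. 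The paper's Theorem~\ref{mainot} is precisely what your argument \emph{does} yield once you keep honest track of this constraint, and Proposition~\ref{sesha main} spells out that the sharp Popa--Schnell bound would follow from the (open) Ein--Lazarsfeld Conjecture~\ref{EL conj} on Seshadri constants.

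Your treatment of singular fibers is also not a proof: passing to a log resolution and invoking invariance of plurigenera does not by itself produce global generation of the direct image at a critical value $y_0$; one would still need to extend sections from the special fiber, not merely know that plurigenera are constant nearby. The paper makes no claim at singular fibers for this reason.
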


\medskip

In \cite{PS14}, Popa and Schnell proved the conjecture in the case when $L$ is an ample and globally generated line bundle, and in general when ${\rm dim}(X)=1$.  In a  recent preprint \cite{Dut17}, Dutta  was able to  remove the global generation assumption on
$L$ making a statement about generic global generation with weaker bound on the twist, as in the work of Angehrn and Siu \cite{AS95}, on the effective freeness of adjoint bundles. Her theorem is as follows:
\begin{thm}[Dutta]\label{Dut}
	Let $f:X\rightarrow Y$ be a surjective morphism of  
	projective varieties,  with $Y$ smooth and ${\rm dim}(Y)=n$. Let $L$ be an ample line bundle on $Y$.  Consider a klt  pair $(X,\Delta)$ on $X$, with $\Delta$ $\mathbb{Q}$-effective, such that $k(K_X+\Delta)$ is a  Cartier divisor for some $k\geq 1$. Denote $P=\oc_X\big(k(K_X+\Delta)\big)$. Then, for every
	$m\geq 1$, the sheaf 
	$$f_*P\otimes L^l$$
	is generated  by  global  sections  at  a  general  point $y\in Y$, either
	\begin{enumerate}[(a)]
		\item for all $l\geq k\big(\binom{n+1}{2}+1\big)$\\
		or
		\item for all $l\geq k(n+1)$ when $n\leq 4$.
	\end{enumerate}
\end{thm}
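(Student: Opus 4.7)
\emph{Proof proposal.} The title of the paper already signals that the key analytic input is the Ohsawa--Takegoshi $L^2$-extension theorem. My plan is to transpose the Angehrn--Siu method for effective base-point-freeness of adjoint bundles into the relative setting, by combining (i) a singular Hermitian metric on $L$ that isolates a chosen general point $y \in Y$, and (ii) an $L^2$-extension theorem on the total space $X$ that lifts a section of $P|_{X_y}$ to a global section of $P \otimes f^*L^l$; pushing forward such an extension realises it as a section of $f_*P\otimes L^l$ generating the stalk at $y$.

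Concretely, I would first choose $y$ generic enough that $f$ is smooth over $y$, that $X_y$ is smooth, and that no component of $\Delta$ behaves pathologically on $X_y$. Next, the Angehrn--Siu construction (or, for case~(b), the sharpened variants of Heier/Helmke valid when $n \leq 4$) yields an effective $\mathbb{Q}$-divisor $D \equiv \lambda L$ on $Y$ with $\lambda \leq \binom{n+1}{2}+1$ (resp.\ $\lambda \leq n+1$) such that the pair $(Y,D)$ has $y$ as an isolated minimal log canonical centre. Scaling by $k$ gives a singular metric $h_0$ on $L^{k\lambda}$ whose multiplier ideal $\mathcal{J}(h_0)$ cuts out $y$ with the appropriate scheme structure near $y$. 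I would then pair $h_0$ with any smooth semipositive metric on $L^{l-k\lambda}$ (the exponent is $\geq 0$ by hypothesis), pull the combined metric back to $X$, and mix it with the natural singular metric attached to $\Delta$ so that
\[
P\otimes f^*L^l \;=\; K_X \;+\; \bigl((k-1)(K_X+\Delta)+\Delta+f^*L^l\bigr)
\]
becomes the adjoint of a line bundle carrying a singular metric of positive curvature. Ohsawa--Takegoshi in the Demailly formulation (or its relative refinement in the spirit of Cao--P\u{a}un) then extends any element of $H^0(X_y, P|_{X_y})$ to a section in $H^0(X, P\otimes f^*L^l)$, which pushes down to a local generator of $f_*P\otimes L^l$ at $y$.

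The main technical obstacle is ensuring that the pull-back of the Angehrn--Siu metric interacts cleanly with the $L^2$-geometry on $X$ along the fibre $X_y$: ramification of $f$ can in principle create additional log-canonical centres inside $X$ lying over $y$, and one must verify that these do not obstruct the extension. This is where the klt hypothesis on $(X,\Delta)$ is essential, since it lets the fractional contribution coming from $f^*D$ be absorbed while keeping the pair log terminal on the fibre. The factor $k$ in the final bound is forced by the pluricanonical twist, which requires raising the base metric to the $k$-th power in order to absorb $K_X^{\otimes k}$; case~(b) is the identical argument fuelled by the sharper Heier/Helmke isolation available when $n \leq 4$, with $\binom{n+1}{2}+1$ simply replaced by $n+1$.
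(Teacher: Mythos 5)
First, a point of order: the paper does not prove Theorem~\ref{Dut} at all. It is quoted from Dutta's preprint \cite{Dut17} purely as motivation and context, so there is no internal proof to compare yours against. The closest relatives inside the paper are Theorems~\ref{mainot} and~\ref{sharp bound}, whose proofs do share your two ingredients: a singular metric on a multiple of $L$ whose multiplier ideal isolates the point $y$ (there obtained from the Seshadri constant rather than from Angehrn--Siu), and an Ohsawa--Takegoshi-type extension of sections from the fibre $X_y$. Your outline also tracks the actual strategy of \cite{Dut17}: Angehrn--Siu \cite{AS95} (resp.\ the low-dimensional refinements for $n\le 4$) produces $D\equiv\lambda L$ with $y$ an isolated minimal lc centre and $\lambda\le\binom{n+1}{2}+1$ (resp.\ $n+1$), and the factor $k$ comes from scaling. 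So as a road map the proposal is sound.

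The genuine gap is the step where you write $P\otimes f^*L^{l}=K_X+\bigl((k-1)(K_X+\Delta)+\Delta+f^*L^{l}\bigr)$ and assert that the bundle in parentheses ``becomes the adjoint of a line bundle carrying a singular metric of positive curvature.'' The metric attached to $\Delta$ and the pulled-back Angehrn--Siu metric only cover the $\Delta$ and $f^*L^{l}$ blocks; for $k\ge 2$ a semipositively curved singular metric on the pluricanonical block $(k-1)(K_X+\Delta)$ does not come for free, and producing it is precisely the hard part of every theorem of this type. It is supplied either by the $m$-relative Bergman kernel metric of Berndtsson--P\u{a}un \cite{BP10} (as in Theorem~\ref{extension2}, which in turn requires verifying that sections of $k(K_{X}+\Delta)$ restricted to a nearby regular fibre extend locally and that the restricted multiplier ideal is trivial on the general fibre), or by a Siu--P\u{a}un two-tower induction as in the paper's proof of Theorem~\ref{mainot}; in either case one must also check that the section to be extended is $L^2$ along $X_y$ with respect to this metric. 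A smaller but related issue: your worry about extra log canonical centres over $y$ is legitimate, but the cure is not the klt hypothesis on $(X,\Delta)$ alone --- it is the genericity of $y$ (smoothness of $f$ over $y$, so that the pullback of the base metric has lc singularities exactly along the reduced fibre $X_y$) combined with the fact that the Angehrn--Siu centre downstairs is the single reduced point $y$. As written, then, the proposal correctly identifies the architecture of the argument but omits the central construction that makes the extension step legal.
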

Here $\binom{n+1}{2}$ is the Angehrn-Siu type bound in their work on the Fujita conjecture \cite{AS95}. 

\medskip

Inspired by Demailly's recent work on the Ohsawa-Takegoshi type extension theorem \cite{Dem15} and P\u{a}un's proof of Siu's invariance of plurigenera \cite{Pau07}, we are able to prove the following theorem:
\begin{thmx}\label{mainot}
	Let $f:X\rightarrow Y$ be a morphism of smooth
	projective varieties,  with ${\rm dim}(Y)=n$, and let $L$ be an ample line bundle on $Y$.   If $y$ is a regular value of $f$, then for every
	$k\geq 1$, the sheaf 
	$$f_*(K_X^{\otimes k})\otimes L^l$$
	is generated  by  global  sections  at  $y$ for
	any $l\geq k(\floor*{ \frac{n}{\epsilon(L,y)}}+1)$. Here $\epsilon(L,y)>0$ is the Seshadri constant of $L$ at the point $y$.
\end{thmx}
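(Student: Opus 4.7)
The plan is to convert global generation at $y$ into an extension problem across the fiber $X_y$, manufacture a singular Hermitian metric with prescribed Lelong number at $y$ using the Seshadri constant, and then iterate an Ohsawa--Takegoshi extension \`a la P\u{a}un.

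\emph{Reduction.} Since $y$ is a regular value, $X_y$ is smooth and adjunction gives $K_X|_{X_y}\simeq K_{X_y}$. Cohomology and base change identify the fiber of $f_*(K_X^{\otimes k})$ at $y$ with $H^{0}(X_y,K_{X_y}^{\otimes k})$, while the projection formula identifies $H^{0}(Y,f_*(K_X^{\otimes k})\otimes L^l)$ with $H^{0}(X,K_X^{\otimes k}\otimes f^{*}L^{l})$. Global generation of the direct image at $y$ therefore amounts to surjectivity of
$$
\rho\colon H^{0}\bigl(X,K_X^{\otimes k}\otimes f^{*}L^{l}\bigr)\longrightarrow H^{0}\bigl(X_y,K_{X_y}^{\otimes k}\bigr),
$$
so the task reduces to extending sections from $X_y$ to $X$.

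\emph{Seshadri-based singular metric.} Set $l_0:=\lfloor n/\epsilon(L,y)\rfloor+1$, so that $l_0\epsilon(L,y)>n$, and pick a rational $\eta$ with $n/l_0<\eta<\epsilon(L,y)$. By the analytic characterization of the Seshadri constant (Demailly), there exists a singular Hermitian metric $h_\eta$ on $L$ with positive curvature current, smooth on $Y\setminus\{y\}$, and Lelong number $\eta$ at $y$. Then the pullback $f^{*}h_\eta^{\otimes l_0}$ is a singular metric on $f^{*}L^{l_0}$ with positive curvature current, smooth on $X\setminus X_y$, and generic Lelong number $l_0\eta>n$ along the codimension-$n$ submanifold $X_y$; in particular its multiplier ideal sheaf is contained in $\mathcal{I}_{X_y}$.

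\emph{Extension by P\u{a}un-style induction.} I extend sections by induction on $k$. For $k=1$, apply Demailly's optimal-constant Ohsawa--Takegoshi extension theorem to $K_X\otimes f^{*}L^{l}$, equipping $f^{*}L^{l}=f^{*}L^{l_0}\otimes f^{*}L^{l-l_0}$ with $f^{*}h_\eta^{\otimes l_0}$ on the first factor and a smooth positive metric on the second; since $\mathscr{J}(f^{*}h_\eta^{\otimes l_0})\subset\mathcal{I}_{X_y}$, every $u\in H^{0}(X_y,K_{X_y})$ extends. For $k\geq 2$, assume inductively that every section in $H^{0}(X_y,K_{X_y}^{\otimes(k-1)})$ extends to a global section of $K_X^{\otimes(k-1)}\otimes f^{*}L^{(k-1)l_0}$. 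Fix extensions $\tilde\sigma_1,\dots,\tilde\sigma_N$ of a basis of $H^{0}(X_y,K_{X_y}^{\otimes(k-1)})$, and define a singular Hermitian metric $h_{k-1}$ on $K_X^{\otimes(k-1)}\otimes f^{*}L^{(k-1)l_0}$ by the weight $\log\sum_j|\tilde\sigma_j|^{2}$; its curvature current is positive. Tensoring $h_{k-1}$ with $f^{*}h_\eta^{\otimes l_0}$ on an extra $f^{*}L^{l_0}$ factor (and a smooth positive metric on the leftover $f^{*}L^{l-kl_0}$) yields a singular positively curved metric on $K_X^{\otimes(k-1)}\otimes f^{*}L^{l}$ whose multiplier ideal near $X_y$ lies in $\mathcal{I}_{X_y}$, and a further application of Ohsawa--Takegoshi extends any $u\in H^{0}(X_y,K_{X_y}^{\otimes k})$.

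\emph{Main obstacle.} The delicate point is the $L^{2}$-estimate that makes the bootstrap run: to invoke Ohsawa--Takegoshi in the inductive step one has to verify that $u$ is $L^{2}$-integrable against the composite metric $h_{k-1}\cdot f^{*}h_\eta^{\otimes l_0}$ in a neighborhood of $X_y$, which reduces to a H\"older-type pointwise comparison between $|u|^{2}$ and $\sum_j|\sigma_j|^{2}$ at a general point of $X_y$. This is precisely the $L^{2/m}$ estimate at the heart of P\u{a}un's proof of Siu's invariance of plurigenera, and the ample twist by $L^{l}$---whose positivity at $y$ is measured by the Seshadri constant---is what allows the bookkeeping to close cleanly: the minimal Lelong number forcing $\mathscr{J}\subset\mathcal{I}_{X_y}$ is the codimension $n$, which dictates $l_0\epsilon(L,y)>n$ at each of the $k$ inductive steps and hence the bound $l\geq kl_0=k\bigl(\lfloor n/\epsilon(L,y)\rfloor+1\bigr)$.
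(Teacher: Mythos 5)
Your overall strategy is the paper's: reduce global generation at $y$ to surjectivity of the restriction map, use $\epsilon(L,y)$ via Demailly's equality $\epsilon=\gamma$ to build a positively curved singular metric on $L^{l_0}$ whose pullback is singular exactly along $X_y$, and then extend by Ohsawa--Takegoshi. Your $k=1$ case is essentially identical to the paper's (which phrases it as Corollary 3.4/Proposition 3.1, taking care to normalize the weight so that $\psi$ has \emph{log canonical} singularities along $X_y$, i.e.\ $\mathscr{J}(\psi)=\mathcal{I}_{X_y}$ exactly, not merely $\subset\mathcal{I}_{X_y}$, which is what Demailly's version of the extension theorem requires).

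For $k\geq 2$, however, your inductive scheme has a genuine gap. You induct directly on $k$: extend a basis $\sigma_1,\dots,\sigma_N$ of $H^0(X_y,K_{X_y}^{\otimes(k-1)})$ to sections $\tilde\sigma_j$, form the metric with weight $\log\sum_j|\tilde\sigma_j|^2$ on $K_X^{\otimes(k-1)}\otimes f^*L^{(k-1)l_0}$, and then try to extend $u\in H^0(X_y,K_{X_y}^{\otimes k})$ against it. The $L^2$ hypothesis of Ohsawa--Takegoshi then requires (up to the harmless $f^*h_\eta$ factor) that $\int_{X_y}|u|^2/\sum_j|\sigma_j|^2<+\infty$, and this simply need not hold: the $\sigma_j$ span $H^0(X_y,K_{X_y}^{\otimes(k-1)})$, which can have a nonempty base locus on $X_y$ along which $u$ does not vanish. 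This is exactly the known failure of the ``naive'' induction for invariance of plurigenera, and naming it as ``the $L^{2/m}$ estimate at the heart of P\u{a}un's proof'' does not fix it, because P\u{a}un's argument has a different structure designed precisely to circumvent this: one introduces an auxiliary very ample $A$ on $X$ so that $F_{0,r}=r(K_X+\tilde L)+A$ is globally generated (killing common zeros along $X_y$), runs a two-index bootstrap producing extensions $u_j^{(q,r)}$ of $\sigma^q u_j^{(0,r)}|_{X_y}$ in $F_{q,r}=(qk+r)(K_X+\tilde L)+A$ with uniform $L^2$ bounds, and then lets $q\to\infty$ to obtain a limit weight $f_\infty$ and a metric $h_\infty$ on $(k-1)(K_X+\tilde L)$ satisfying the crucial lower bound $f_\infty\geq\log|\sigma|^2_{\hat h^k}+O(1)$ on $X_y$; only then is $|\sigma|^2_{\tilde h h_\infty}=|\sigma|^{2(k-1)/k}_{h_\infty}\cdot|\sigma|^{2/k}_{\hat h}$ bounded and the final extension step legitimate. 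The auxiliary bundle $A$, the tower $F_{q,r}$, the uniform estimates permitting the limit, and the resulting bound on $X_y$ are the actual content of the $k\geq 2$ case, and they are absent from your argument.
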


Motivated in part by his study of linear
series in connection with the Fujita conjecture, Demailly  introduced the \emph{Seshadri constant} to measure the local positivity of the ample line bundle at a point  \cite{Dem92}.
After that Ein and Lazarsfeld systematically studied the Seshadri constant, and they first proved that for any ample line bundle $L$ on a projective surface $Y$, the Seshadri constant 
$$
\epsilon(L,y)\geq 1
$$
for a very general point on $Y$ \cite{EL93}. Inspired by this result, they further raised the following conjecture:
\begin{conjecture}[Ein-Lazarsfeld]\label{EL conj}
	Let $Y$ be any projective manifold, and $L$ any ample line bundle on $Y$. Then the Seshadri constant 
	$$\epsilon(L,y)\geq 1$$ 
	at a very general point $y\in Y$.
\end{conjecture}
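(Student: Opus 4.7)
I would argue by contradiction. Suppose that at a very general point $y\in Y$ one has $\epsilon(L,y)<1$. By definition of the Seshadri constant this means that through each such $y$ there is an irreducible curve $C_y\ni y$ with
\[
\frac{L\cdot C_y}{\operatorname{mult}_y C_y}<1.
\]
The first step is to assemble these curves into one algebraic family. A standard Hilbert-scheme / Chow-variety argument, using that ``very general'' is the complement of only countably many proper subvarieties, produces an irreducible parameter space $T$ with universal curve $\pi:\mathcal{C}\to T$ and evaluation map $\operatorname{ev}:\mathcal{C}\to Y$ that is dominant. After specializing to a point of $T$ whose image under $\operatorname{ev}$ is very general in $Y$, one obtains a covering family of irreducible curves whose multi-point Seshadri quotient is bounded above by a fixed constant $<1$.

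\textbf{Second step.} Analyze the covering family. Let $m\geq 1$ be the generic value of $\operatorname{mult}_{\operatorname{ev}(c)}(C_{\pi(c)})$ for $c\in\mathcal{C}$ general. If $m=1$, then $L\cdot C_t$ is a nonnegative integer strictly less than $1$, hence $0$, contradicting the ampleness of $L$. If $m\geq 2$, I would try to apply Mori's bend-and-break (possibly after reduction to $\operatorname{char} p$) or the tangent-cone specialization of Ein, K\"uchle and Lazarsfeld to split $C_t$ and extract a new covering family whose Seshadri quotient is strictly smaller, iterating down to the case $m=1$.

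\textbf{Alternative in the spirit of this paper.} One can instead exploit the jet-separation characterization $\epsilon(L,y)=\lim_{m\to\infty}s(mL,y)/m$, where $s(mL,y)$ is the largest $s$ for which $mL$ separates $s$-jets at $y$. Combined with Demailly's sharp Ohsawa--Takegoshi extension theorem (the same engine behind Theorem~A), one can try to extend jets of pluricanonical sections from a very general $y$ to produce effective lower bounds on $s(mL,y)/m$, obtaining $\epsilon(L,y)\geq c$ for some explicit $c$ depending on $n$.

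\textbf{Main obstacle.} This is in fact a well-known open problem for $\dim Y\geq 3$, and neither strategy reaches the threshold $1$. The covering-family approach of Ein--K\"uchle--Lazarsfeld yields only $\epsilon(L,y)\geq 1/n$ at a very general point, because once $m\geq 2$ there is no general mechanism forcing $C_t$ to split into curves of strictly smaller quotient. The Ohsawa--Takegoshi / jet-separation route is similarly bottlenecked by Angehrn--Siu-type constants that scale like $\binom{n+1}{2}$ rather than linearly in $n$, and so cannot give a dimension-free bound. In dimension two the conjecture is proved via Reider's theorem, which rests on the rank-$2$ Bogomolov inequality and has no higher-rank analogue; any full proof in higher dimensions seems to demand a genuinely new structural input constraining covering families of irreducible curves with prescribed generic multiplicity.
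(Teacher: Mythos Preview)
The paper does not prove this statement: it is recorded there as Conjecture~\ref{EL conj} (Ein--Lazarsfeld), and the paper neither claims nor attempts a proof. It is invoked only conditionally, in Proposition~\ref{sesha main}, where the conclusion ``Conjecture~\ref{PS} holds at general points'' is drawn \emph{under the hypothesis} that Conjecture~\ref{EL conj} is true; unconditionally the paper relies only on the Ein--K\"uchle--Lazarsfeld bound $\epsilon(L,y)\geq 1/n$ at very general points (Theorem~\ref{EKLmain}/\ref{EKL}).

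Your write-up is therefore not really a proof proposal but a (correct) diagnosis of why the conjecture is open, and you say so explicitly in your ``Main obstacle'' paragraph. Your summary of the two natural approaches and their bottlenecks---the covering-family argument stalling at $1/n$ because there is no splitting mechanism once the generic multiplicity exceeds~$1$, and the jet-separation/Ohsawa--Takegoshi route being capped by Angehrn--Siu-type constants---is accurate and matches the state of the art reflected in the paper's references. There is nothing to compare your argument to, since the paper offers none; if anything, you might make clearer at the outset that you are explaining why no proof is expected here, rather than presenting a plan you believe can be completed.
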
 
In \cite{EKL95}, they proved the existence of  universal generic bound in a fixed dimension. However, the bound is suboptimal by a factor of $n={\rm dim}(Y)$. 
\begin{thm}[Ein-K\"uchle-Lazarsfeld]\label{EKLmain}
	Let $Y$ be a projective variety, and $L$ an ample line bundle on $Y$. Then for any given $\delta>0$, the locus 
	$$
	\{y\in Y| \epsilon(L,y)>\frac{1}{n+\delta} \}
	$$
	contains a Zariski-dense open set in $Y$.
\end{thm}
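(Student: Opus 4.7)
The plan is to recast global generation of $f_*(K_X^{\otimes k})\otimes L^{\otimes l}$ at the regular value $y$ as an Ohsawa-Takegoshi extension problem across the smooth fiber $X_y$, build an auxiliary singular Hermitian metric out of the Seshadri constant $\epsilon(L,y)$, and solve the extension by iterating Demailly's Ohsawa-Takegoshi theorem in the style of P\u{a}un's proof of Siu's invariance of plurigenera.

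First, the reduction. Since $y$ is a regular value, $f$ is smooth over a neighbourhood of $y$, $f_*(K_X^{\otimes k})$ is locally free there, and cohomology and base change plus adjunction identify its fiber at $y$ with $H^0(X_y,K_{X_y}^{\otimes k})$ (the normal bundle $N_{X_y/X}\cong f^*T_yY$ is trivial, so $K_{X_y}^{\otimes k}\cong K_X^{\otimes k}|_{X_y}$). Via the projection formula, global generation at $y$ becomes surjectivity of the restriction
\[
H^0\!\bigl(X,K_X^{\otimes k}\otimes f^*L^{\otimes l}\bigr)\longrightarrow H^0\!\bigl(X_y,K_{X_y}^{\otimes k}\bigr),
\]
so I must extend every $\sigma\in H^0(X_y,K_{X_y}^{\otimes k})$ to a global section of $K_X^{\otimes k}\otimes f^*L^{\otimes l}$.

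Next, the metric and the iteration. The hypothesis $l\geq k(\lfloor n/\epsilon(L,y)\rfloor+1)$ gives $l\epsilon(L,y)>kn$, so I fix $\epsilon'<\epsilon(L,y)$ with $l\epsilon'>kn$. The jet-separation interpretation of the Seshadri constant (Demailly) yields a positively curved singular Hermitian metric $h_L$ on $L$ with isolated analytic singularity at $y$ and Lelong number $\nu(h_L,y)\geq\epsilon'$; pulling back through the submersion $f$, the metric $f^*h_L^{\otimes l}$ on $f^*L^{\otimes l}$ is semipositive with Lelong number $\geq l\epsilon'>kn$ along the codimension-$n$ fiber $X_y$. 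To extend $\sigma$, I run P\u{a}un's loop: inductively produce $\Sigma_m\in H^0(X,K_X^{\otimes mk}\otimes f^*L^{\otimes ml})$ with $\Sigma_m|_{X_y}=\sigma^{\otimes m}$ by factoring $K_X^{\otimes mk}\otimes f^*L^{\otimes ml}=K_X\otimes B_m$, equipping $B_m=K_X^{\otimes(mk-1)}\otimes f^*L^{\otimes ml}$ with the psh metric $|\Sigma_{m-1}|^{2(mk-1)/((m-1)k)}\cdot(f^*h_L)^{\otimes l\beta_m}$ assembled from the previous extension and the Seshadri metric, and applying Demailly's optimal-constant Ohsawa-Takegoshi theorem. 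Passing to a weak limit of the normalized densities $|\Sigma_m|^{2/m}$ produces a positively curved singular metric on $K_X^{\otimes k}\otimes f^*L^{\otimes l}$, and one final Ohsawa-Takegoshi extension with respect to this metric yields the desired section $S\in H^0(X,K_X^{\otimes k}\otimes f^*L^{\otimes l})$ restricting to $\sigma$ on $X_y$.

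The main obstacle is to ensure, at each inductive step, that the auxiliary metric on $B_m$ is simultaneously semipositively curved on all of $X$ and has multiplier ideal restricting trivially to $X_y$, so that $\sigma^{\otimes m}$ is $L^2$-integrable for it. This trivial-restriction condition is exactly what the Seshadri input controls, via Skoda's integrability criterion in the $n$ directions transverse to $X_y$: the codimension $n$ accounts for the $n$ in $kn$, while the P\u{a}un iteration on $m$ contributes the remaining factor $k$. Pushing these estimates through all $m$ without collapsing positivity or $L^2$-integrability, and extracting a clean limit section from the sequence $\{\Sigma_m\}$ in the non-compact setting away from $X_y$, is the technical heart of the proof.
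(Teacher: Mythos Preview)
Your proposal does not address the stated theorem. The statement labeled \texttt{EKLmain} is the Ein--K\"uchle--Lazarsfeld theorem: for an ample line bundle $L$ on a projective variety $Y$ of dimension $n$ and any $\delta>0$, the locus $\{y\in Y\mid \epsilon(L,y)>\frac{1}{n+\delta}\}$ contains a Zariski-dense open set. This is a statement purely about Seshadri constants on $Y$; no fibration $f:X\to Y$, no pluricanonical bundle, and no direct image sheaf appear in it. What you have sketched is instead a proof of Theorem~\ref{mainot} of the paper (the Popa--Schnell-type global generation result for $f_*(K_X^{\otimes k})\otimes L^{\otimes l}$), which \emph{uses} EKLmain as a black box rather than proving it.

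Note also that the paper does not prove EKLmain at all: it is quoted from \cite{EKL95} as an external input. A proof of EKLmain requires the arguments of Ein--K\"uchle--Lazarsfeld (counting jets along subvarieties and a Noetherian/semicontinuity argument to pass from very general to general points), none of which appear in your write-up. Incidentally, your sketch of Theorem~\ref{mainot} is broadly in the spirit of the paper's own proof (P\u{a}un iteration plus Demailly's extension theorem), but that is beside the point here since you are answering the wrong question.
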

Applying Theorem \ref{EKLmain} to our Theorem \ref{mainot}, we have the following general result:
\begin{propx}\label{sesha main}
With the same setting in Theorem \ref{mainot}.  For any $k\geq 1$, the direct image
	$$f_*(K_X^{\otimes k})\otimes L^l$$
	is generated  by  global  sections  at  general points of $Y$ for
	any $l\geq k(n^2+1)$. In particular, if the manifold $Y$ satisfies Conjecture \ref{EL conj}, then Conjecture \ref{PS} holds true for general points in $Y$; that is, the direct image
	$$f_*(K_X^{\otimes k})\otimes L^l$$
	is generated  by  global  sections  at   general points of $Y$ for
	any $l\geq k(n+1)$.
\end{propx}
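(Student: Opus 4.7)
The plan is to combine Theorem \ref{mainot} with Theorem \ref{EKLmain} and generic smoothness. First I would observe that since both the set of regular values of $f$ (by generic smoothness, or Sard's theorem in the algebraic setting) and the locus $\{y\in Y\mid \epsilon(L,y)>\frac{1}{n+\delta}\}$ (by Theorem \ref{EKLmain}) contain Zariski-dense open subsets of $Y$, their intersection is again Zariski-dense and open. Hence for a general point $y\in Y$ we may simultaneously assume $y$ is a regular value of $f$ and $\epsilon(L,y)>\frac{1}{n+\delta}$ for any fixed $\delta>0$ we choose in advance.

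Next I would apply Theorem \ref{mainot}: at such a point $y$, the sheaf $f_*(K_X^{\otimes k})\otimes L^l$ is generated by global sections at $y$ whenever
\[
l\;\geq\; k\Bigl(\floor*{\tfrac{n}{\epsilon(L,y)}}+1\Bigr).
\]
The idea is to choose $\delta$ small enough that $\frac{n}{\epsilon(L,y)}< n(n+\delta)<n^2+1$, which forces $\floor*{n/\epsilon(L,y)}\leq n^2$. Concretely, taking any $\delta<\tfrac{1}{n}$ suffices, and then the bound becomes $l\geq k(n^2+1)$, proving the first assertion.

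For the second assertion, under the Ein-Lazarsfeld Conjecture \ref{EL conj} one has $\epsilon(L,y)\geq 1$ at a very general point $y\in Y$. Intersecting once more with the regular-value locus, a general point $y$ satisfies $\frac{n}{\epsilon(L,y)}\leq n$ and is a regular value of $f$, so Theorem \ref{mainot} yields global generation of $f_*(K_X^{\otimes k})\otimes L^l$ at $y$ whenever $l\geq k(n+1)$, which is exactly Conjecture \ref{PS} at general points.

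There is essentially no real obstacle here: the whole statement is a packaging of Theorem \ref{mainot} and Theorem \ref{EKLmain}, together with the elementary arithmetic of the floor function and the standard fact that the intersection of two Zariski-dense open sets is Zariski-dense. The only minor point requiring a brief justification is the choice of $\delta$, ensuring that $n(n+\delta)$ stays strictly below $n^2+1$ so that the floor drops to at most $n^2$.
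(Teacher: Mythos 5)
Your proposal is correct and is essentially the argument the paper intends: Proposition~\ref{sesha main} is stated there as a direct application of Theorem~\ref{EKLmain} to Theorem~\ref{mainot}, exactly as you do, with the same choice of $\delta<1/n$ to force $\floor*{n/\epsilon(L,y)}\leq n^2$. The only point worth a half-sentence more care is that Conjecture~\ref{EL conj} gives $\epsilon(L,y)\geq 1$ only at a \emph{very general} point, so to conclude at \emph{general} points one should invoke the semicontinuity statement for Seshadri constants of ample bundles recorded after Theorem~\ref{EKL} (since $1>n/(n+1)$, the locus $\{z:\epsilon(L,z)>n/(n+1)\}$ then contains a Zariski-dense open set).
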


Compared to Theorem \ref{Dut} by Dutta, our bound for $l$ is also quadratic on $n$ but slightly weaker than hers. However, if we apply the  result that $K_Y+(n+1)L$ is semi-ample for any ample line bundle $L$, we can obtain a linear bound for $l$. Moreover, when $K_Y$ is pseudoeffective, $l$ can be taken independent of $n$.

\begin{thmx}\label{sharp bound}
	Let $f:X\rightarrow Y$ be a surjective morphism of  
projective varieties,  with $X$  normal, $Y$  smooth  and ${\rm dim}(Y)=n$. Let $L$ be an ample line bundle over $Y$.  Consider a klt $\mathbb{Q}$-pair $(X,\Delta)$ on $X$, with $\Delta$ effective. Then for any positive integer $m$ such that $m(K_X+\Delta)$ is a (integral) Cartier divisor,    the direct image
$$f_*\big(m(K_X+\Delta)\big)\otimes L^l$$
is generated  by  global  sections  at  a  general  point $y\in Y$, either
\begin{enumerate}[(a)]
	\item for all $l\geq m(n+1)+n^2-n$\\
	\item for all $l\geq n^2+2$ when $K_Y$ is pseudo-effective.
\end{enumerate}
\end{thmx}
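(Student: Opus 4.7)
The plan is to leverage Proposition~\ref{sesha main} together with two classical inputs: first, the semi-ampleness of $M:=K_Y+(n+1)L$, which follows from the Cone Theorem (giving nefness) and Kawamata's base-point-free theorem applied to the nef and big divisor $2(K_Y+(n+1)L)-K_Y$; and second, the weak positivity of $f_{*}\bigl(m(K_{X/Y}+\Delta)\bigr)$ for klt pairs, in the sense of Viehweg. The algebraic heart of the argument is the identity $L^{n+1}\cong M\otimes K_Y^{-1}$ which, combined with the projection-formula rewrite $f_{*}(m(K_X+\Delta))\cong f_{*}(m(K_{X/Y}+\Delta))\otimes K_Y^{\otimes m}$, allows one to exchange each copy of $K_Y$ either for a copy of the semi-ample $M$ at the cost of $n+1$ copies of $L$ (the strategy for (a)), or to keep it as a pseudoeffective twist at no cost (the strategy for (b)).

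For part (a), substitute $K_Y^{\otimes m}\cong M^{\otimes m}\otimes L^{-m(n+1)}$ to rewrite
\[
f_{*}\bigl(m(K_X+\Delta)\bigr)\otimes L^{l}\;\cong\;f_{*}\bigl(m(K_{X/Y}+\Delta)\bigr)\otimes M^{\otimes m}\otimes L^{\,l-m(n+1)}.
\]
Because $M^{\otimes m}$ is semi-ample, a section of a suitable power of $M$ nonvanishing at a general $y\in Y$ lets us absorb the $M^{\otimes m}$-factor, so it suffices to produce generic global generation for $f_{*}(m(K_{X/Y}+\Delta))\otimes L^{\,l-m(n+1)}$; the target bound then becomes $l-m(n+1)\geq n^{2}-n$. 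One applies Proposition~\ref{sesha main} in its klt/relative formulation to the weakly positive sheaf $f_{*}(m(K_{X/Y}+\Delta))$, with the Ein--K\"uchle--Lazarsfeld estimate $\epsilon(L,y)>\frac{1}{n+\delta}$ (Theorem~\ref{EKLmain}) controlling the Seshadri-type constant that appears in Theorem~\ref{mainot}.

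For part (b), one keeps all $m$ copies of $K_Y$. Since $K_Y$ is pseudoeffective, so is $K_Y^{\otimes m}$, and the sheaf
\[
\mathcal{F}_m:=f_{*}\bigl(m(K_{X/Y}+\Delta)\bigr)\otimes K_Y^{\otimes m}
\]
is weakly positive (Viehweg tensor pseudoeffective). The Ohsawa--Takegoshi extension argument underlying Theorem~\ref{mainot}, adapted so that the singular hermitian metric used for extension incorporates both the weakly positive structure of $f_{*}(m(K_{X/Y}+\Delta))$ and a positive curvature current on $K_Y^{\otimes m}$, then yields generic global generation of $\mathcal{F}_m\otimes L^{l}$ as soon as $l\geq n^{2}+2$: Theorem~\ref{EKLmain} supplies $n^{2}+1$ via the Seshadri-constant bound, and the extra unit absorbs the pseudoeffective (rather than strictly positive) contribution of $K_Y^{\otimes m}$. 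Crucially, the bound depends neither on $m$ nor on the boundary $\Delta$.

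The main obstacle is not the algebraic juggling of $K_Y$, $M$, and $L^{n+1}$, which is essentially bookkeeping once the identity $L^{n+1}\cong M\otimes K_Y^{-1}$ is exploited. Rather, it is the extension of the Ohsawa--Takegoshi/Seshadri-constant technique of Theorem~\ref{mainot} from the absolute setting $f_{*}(K_X^{\otimes k})$ to (i) klt pairs $m(K_X+\Delta)$ with $X$ only normal, and (ii) further twists by a weakly positive sheaf and a pseudoeffective line bundle, while keeping the dependence on $m$ linear for (a) and trivial for (b). This forces the singular-hermitian-metric and $L^{2}$-extension estimates of the earlier sections to be made uniform in both $\Delta$ and in the pseudoeffective correction $K_Y^{\otimes m}$, and it is there that the precise constants $n^{2}-n$ in (a) and $n^{2}+2$ in (b) are actually pinned down.
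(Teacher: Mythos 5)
Your accounting of the constants is the right one, and your part (b) is close in spirit to the paper's argument: the paper feeds $(m-1)K_Y+L$ (big, since $K_Y$ is pseudo-effective) into the pseudo-effective twist of Theorem \ref{extension2} and the last copy of $K_Y$ into $A_Y=K_Y+lL$, which is exactly where $n^2+2=(n^2+1)+1$ comes from. Part (a), however, rests on a reduction that is false. After absorbing $M^{\otimes m}$ you claim it suffices to prove generic global generation of $f_{*}\big(m(K_{X/Y}+\Delta)\big)\otimes L^{\,n^{2}-n}$, citing a ``klt/relative formulation'' of Proposition \ref{sesha main}. No such statement exists in the paper, and the statement you need is wrong: for $n=1$ it asserts that $f_{*}(mK_{X/Y})$ itself is generically globally generated for every fibration over a curve, which already fails for a bielliptic surface fibered over an elliptic curve, where $f_{*}K_{X/Y}$ is a nontrivial torsion line bundle with no sections. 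The Ohsawa--Takegoshi mechanism of Theorem \ref{extension2} produces sections of $mK_{X/Y}+P+f^{*}A_Y$ where $A_Y$ must contain one full copy of $K_Y$ plus a divisor of Seshadri constant $>n$; that copy of $K_Y$ cannot be stripped off. The correct intermediate statement is therefore generic global generation of $f_{*}\big(m(K_{X/Y}+\Delta)\big)\otimes\big(K_Y+(n^{2}+1)L\big)\cong f_{*}\big(m(K_{X/Y}+\Delta)\big)\otimes M\otimes L^{\,n^{2}-n}$, one factor of $M$ heavier than what you assert. The paper never makes your reduction: it applies Theorem \ref{extension2} once, on a log resolution $\mu:X'\to X$, with pseudo-effective twist $P=(m-1)f'^{*}\big(K_Y+(n+1)L\big)+\sum_j mb_jF_j$ carrying a smooth semipositive metric and with $A_Y=K_Y+l'L$, $l'\geq n^2+1$; the count $(m-1)(n+1)+(n^{2}+1)=m(n+1)+n^{2}-n$ then comes out directly.

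Two further gaps. First, absorbing $M^{\otimes m}$ by a section nonvanishing at a general $y$ is not available for arbitrary $m$: semi-ampleness only makes some multiple of $M$ base-point free, and a nontrivial torsion $M$ has $h^{0}(M^{\otimes m})=0$ for most $m$. The paper avoids this by using only a semipositive \emph{metric} on $(m-1)\big(K_Y+(n+1)L\big)$, never its sections. Second, since $X$ is merely normal and $(X,\Delta)$ klt, one cannot apply the smooth extension theorem directly: the paper passes to a log resolution, puts the fractional boundary $\sum_j mb_jF_j$ (with $0<b_j<1$, hence trivial multiplier ideal on general fibers) into $P$, and removes the exceptional part $\sum_i ma_iE_i$ at the end via an isomorphism on global sections. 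Your proposal, which invokes Viehweg weak positivity but never uses it in any estimate, does not address either point.
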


\medskip

The second part of the paper is to study a question   by Demailly-Peternell-Schneider in \cite{DPS01}:
\begin{problem}\label{DPS}
	Let $X$ and $Y$ be normal projective $\mathbb{Q}$-Gorenstein varieties. Let $f:X\rightarrow Y$
	be a surjective morphism. If $-K_X$ is pseudo-effective and  its non-nef locus does not project onto $Y$, is $-K_Y$ pseudo-effective?
\end{problem}

Inspired by the recent work of J. Cao on the local isotriviality on the Albanese map of  projective manifolds with nef anticanonical bundles \cite{Cao16}, we give an affirmative answer to the above problem in a more general setting:
\begin{thmx}\label{DPS01}
		Let $X$ be a normal projective variety and $D$ an
	effective $\mathbb{Q}$-divisor on $X$ such that the pair $(X,D)$ is log canonical. Let $Y$ be a normal projective $\mathbb{Q}$-Gorenstein variety, and $f:X\rightarrow Y$ is a surjective morphism.
	\begin{enumerate}[(a)]
		\item \label{claim1} Assume that $-(K_X+D)$ is pseudo-effective, such that the non-nef locus ${\rm NNef}\big(-(K_X+D)\big)$  does not project onto $Y$ via $f$. Then $-K_Y$ is pseudo-effective. 
		\item \label{claim2} If we further assume that both $X$ and $Y$ are smooth, with $\Delta$ a (not necessarily effective) $\mathbb{Q}$-divisor on $Y$, such that $-(K_X+D)-f^*\Delta$ is pseudo-effective, with its non-nef locus does not map onto $Y$. 	Then $-K_Y-\Delta$ is pseudo-effective with its non-nef locus contained in $f\big({\rm NNef}(-K_X-D-f^*\Delta)\big) \bigcup Z\bigcup Z_D $, where $Z$ is the minimal proper subvariety on $Y$ such that $f:X\setminus f^{-1}(Z)\rightarrow Y\setminus Z$ is smooth, and $Z_D$ is an at most countable union of proper subvarities containing $Z$ such that for every $y\notin Z_D$, the pair $\big(f^{-1}(y), D_{\upharpoonright f^{-1}(y)}\big)$ is also lc.
	\end{enumerate} 
\end{thmx}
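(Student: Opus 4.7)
The plan is to bootstrap pseudo-effectivity from $X$ to $Y$ via the positivity of direct images of relative log-canonical bundles, in the spirit of J.~Cao's analysis of the Albanese map of projective manifolds with nef anticanonical bundle. Set $N := -(K_X+D)-f^*\Delta$, with $\Delta = 0$ in case~(a). The decisive identity is
$$K_{X/Y}+D+N \;=\; -\,f^*(K_Y+\Delta),$$
so that for every sufficiently divisible $m$,
$$\oc_X\bigl(m(K_{X/Y}+D)+mN\bigr) \;\cong\; f^*\oc_Y\bigl(-m(K_Y+\Delta)\bigr).$$
By the non-nef locus hypothesis, $N$ carries a singular Hermitian metric $h_N$ of semipositive curvature whose polar locus does not project onto $Y$; in particular $h_N|_F$ is a well-defined psef singular metric on a general fibre $F$.

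First I would reduce (a) to the smooth snc setting of (b) by a log resolution $\pi\colon\widetilde X\to X$ of $(X,D)$: the lc hypothesis yields $K_{\widetilde X}+\widetilde D = \pi^*(K_X+D)+E$ with $\widetilde D$ reduced snc and $E$ effective $\pi$-exceptional, while $\widetilde f := f\circ\pi$ inherits the non-nef locus condition after replacing $N$ by $\pi^*N+E$. Next I would invoke the direct-image positivity theorem for twisted relative log-canonical bundles, in the form of Berndtsson-P\u{a}un combined with Demailly's optimal Ohsawa-Takegoshi extension: the torsion-free sheaf
$$\mathcal{E}_m \;:=\; \widetilde f_*\!\left(\oc_{\widetilde X}\bigl(m(K_{\widetilde X/Y}+\widetilde D)\bigr)\otimes\jc(h_N^{\otimes m})\right)$$
carries a canonical singular Hermitian metric with Griffiths-semipositive curvature. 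The technical engine is the fibrewise $L^2$ extension of sections of the adjoint bundle $K_{\widetilde X/Y}+\widetilde D$ to the total space, with $L^2$-estimates controlled by $h_N$ and by the lc singularities of $\widetilde D$; the non-nef locus hypothesis is precisely what guarantees that $h_N$ is integrable on a general fibre, so that $\mathcal{E}_m$ has non-zero generic rank.

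The third step identifies $\mathcal{E}_m$ with a rank-one subsheaf of $\oc_Y\bigl(-m(K_Y+\Delta)\bigr)$: combining the above isomorphism, the projection formula, and $\widetilde f_*\oc_{\widetilde X}=\oc_Y$ (normality plus connected fibres, after Stein factorization), the induced metric from Step~2 descends to a semipositively curved singular Hermitian metric on $-m(K_Y+\Delta)$, whose $m$-th root exhibits $-(K_Y+\Delta)$ as pseudo-effective. For part~(b), tracing through where this metric fails to be locally bounded yields the stated inclusion: the unbounded locus is contained in $f\bigl({\rm NNef}(N)\bigr)$ (where $h_N$ blows up), the discriminant $Z$ of $f$ (where the Bergman-type fibrewise construction breaks down), and the at most countable set $Z_D\supseteq Z$ parametrising fibres on which $\bigl(f^{-1}(y),D|_{f^{-1}(y)}\bigr)$ fails to be lc.

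The main obstacle is Step~2: one needs a direct-image positivity theorem in the logarithmic pair setting that accommodates both a singular psef twist $h_N$ and lc singularities of $(X,D)$, and one must follow the optimal Ohsawa-Takegoshi extension process finely enough to locate precisely the loci where the induced metric on $\mathcal{E}_m$ can become unbounded. Carrying this out requires a careful adaptation of Cao's Bergman-kernel / Narasimhan-Simha-type construction to the present non-nef locus and lc framework.
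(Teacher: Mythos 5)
There is a genuine gap at the very first substantive step. You assert that ``by the non-nef locus hypothesis, $N$ carries a singular Hermitian metric $h_N$ of semipositive curvature whose polar locus does not project onto $Y$.'' This does not follow. Pseudo-effectivity of $N$ together with the condition that ${\rm NNef}(N)$ fails to dominate $Y$ only gives, for each $m$, a current $T_{{\rm min},m}\in c_1(N)$ with $T_{{\rm min},m}\geq -\frac{1}{m}\omega$ whose positive-Lelong-number locus $E_+(T_{{\rm min},m})$ does not dominate $Y$; it gives no control whatsoever on the singularities of an honestly \emph{positive} current in $c_1(N)$. Already for $N$ nef (so ${\rm NNef}(N)=\emptyset$) the unique positive current in $c_1(N)$ can be a current of integration along a divisor (the classical Demailly--Peternell--Schneider example on a ruled surface over an elliptic curve), so no semipositive metric with small polar set need exist. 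Since your entire construction of $\mathcal{E}_m$ and of its Bergman-type metric is fed by this single $h_N$, the argument breaks down at the outset. The paper's proof is organized precisely to avoid this: it works with the whole sequence of Demailly-regularized metrics $\tilde h_m$ with neat singularities and ${\rm i}\Theta_{\tilde h_m}\geq -\frac{2}{m}\omega$, absorbs the small negativity by a \emph{fixed} ample twist $A_X$ with ${\rm i}\Theta_h\geq 3\omega$, extends sections of $A_{X\upharpoonright X_y}$ fiber by fiber via the Berndtsson--P\u{a}un relative Bergman metric and the Ohsawa--Takegoshi theorem, and only at the very end lets $m\to\infty$ at the level of base loci of $(-K_Y-\Delta)^{\otimes rmq}\otimes A_Y^{\otimes r}\otimes\det f_*(A_X)$, where the auxiliary ample contribution becomes negligible.

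Two further points. First, your Step 2 invokes a Griffiths-semipositivity theorem for $\widetilde f_*\bigl(m(K_{\widetilde X/Y}+\widetilde D)\otimes\jc(h_N^{\otimes m})\bigr)$ in a setting (lc boundary, singular pseudo-effective twist with negativity $-\frac{2}{m}\omega$ once the previous point is repaired) where such a statement is not off-the-shelf; proving it would require exactly the fiberwise $L^2$-extension the paper carries out explicitly, so it cannot be treated as a black box. The paper sidesteps singular metrics on torsion-free sheaves altogether by deducing generic global generation of $(-K_Y-\Delta)^{\otimes mq}\otimes A_Y\otimes f_*(A_X)$ and then passing to the determinant to land on a genuine line bundle on $Y$. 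Second, for part (a) you reduce to (b) by resolving $X$ only, but $Y$ is merely normal $\mathbb{Q}$-Gorenstein there; one must also resolve $Y$ and then descend pseudo-effectivity of $-K_{Y'}$ to $-K_Y$ (the paper's Lemma 4.2), a step your sketch omits.
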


The following theorem by Fujino and Gongyo \cite[Theorem 1.1]{FG14} is a direct consequence of our Claim \eqref{claim2} in Theorem \ref{DPS01}.
\begin{thm}[Fujino-Gongyo]\label{FG2}
	Let $f:X\rightarrow Y$ be a smooth fibration between smooth projective varieties. Let $D$ be
	an effective $\mathbb{Q}$-divisor on $X$ such that $(X, D)$ is lc, ${\rm Supp}(D)$ is a simple normal crossing divisor, and ${\rm Supp}(D)$ is relatively normal crossing over $Y$. Let $\Delta$ be a (not necessarily effective) $\mathbb{Q}$-divisor on $Y$. Assume that $-(K_X+D)-f^*\Delta$ is nef. Then so is $-K_Y-\Delta$.
\end{thm}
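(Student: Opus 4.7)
The plan is to reduce this directly to Claim \eqref{claim2} of Theorem \ref{DPS01}, whose setup essentially matches the hypotheses here.

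First I would observe that since $-(K_X+D)-f^*\Delta$ is nef, it is in particular pseudo-effective, and its non-nef locus is empty; a fortiori the non-nef locus does not project onto $Y$, so Theorem \ref{DPS01}\eqref{claim2} applies. It yields that $-K_Y-\Delta$ is pseudo-effective with non-nef locus contained in $f\big({\rm NNef}(-K_X-D-f^*\Delta)\big) \cup Z \cup Z_D = Z \cup Z_D$, where $Z \subset Y$ is the minimal proper subvariety over which $f$ fails to be smooth, and $Z_D$ is a countable union of proper subvarieties containing $Z$ outside of which the fiber pair $\big(f^{-1}(y), D_{\upharpoonright f^{-1}(y)}\big)$ is lc.

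Next I would verify that both exceptional loci can be taken empty. Since $f$ is by hypothesis a smooth fibration, $Z=\emptyset$. Since ${\rm Supp}(D)$ is relatively snc over $Y$, for every $y\in Y$ the restriction $D_{\upharpoonright f^{-1}(y)}$ is an snc $\mathbb{Q}$-divisor on the smooth fiber whose coefficients coincide with those of $D$; the log canonical hypothesis on $(X,D)$ with snc support forces these coefficients to lie in $[0,1]$, so $\big(f^{-1}(y),D_{\upharpoonright f^{-1}(y)}\big)$ is automatically lc for every $y\in Y$, and $Z_D$ may be chosen empty as well.

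Finally, the non-nef locus of $-K_Y-\Delta$ is empty, which by the standard characterization of nefness for a pseudo-effective class via the non-nef locus is equivalent to $-K_Y-\Delta$ being nef. I do not foresee a real obstacle: the argument is a direct unpacking of the stronger statement in Theorem \ref{DPS01}\eqref{claim2}, and the only subtlety worth double-checking is that $Z_D=\emptyset$, which is immediate once one notes that restriction to a smooth fiber preserves the coefficients of a relatively snc divisor.
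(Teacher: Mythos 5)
Your argument is correct and is essentially identical to the paper's own deduction of Theorem \ref{FG2} from Theorem \ref{DPS01}\eqref{claim2}: both note that nefness of $-(K_X+D)-f^*\Delta$ empties its non-nef locus, that smoothness of $f$ gives $Z=\emptyset$, and that the relative snc hypothesis together with $(X,D)$ being lc forces every fiber pair to be lc so $Z_D=\emptyset$. No gaps.
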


Moreover, we can also use analytic methods to prove the following theorem.
\begin{thmx}\label{main2}
		Let $X$ be a normal projective variety and $D$ an
	effective $\mathbb{Q}$-divisor on $X$ such that $K_X+D$ is $\mathbb{Q}$-Cartier. Let $Y$ be a normal projective $\mathbb{Q}$-Gorenstein variety, and $f:X\rightarrow Y$ is a surjective morphism.
	\begin{enumerate}[(a)]
		\item\label{claima} Assume that both $X$ and $Y$ are smooth, with $(X,D)$   klt. Let $\Delta$ be a (not necessarily effective) $\mathbb{Q}$-divisor over $Y$, such that  $-K_X-D-f^*\Delta$ is big and its non-nef locus ${\rm NNef}(-K_X-D-f^*\Delta)$ does not dominate $Y$, then $-K_Y-\Delta$ is big.
		\item\label{claimb} Assume that the restriction of $f$ to ${\rm NNef}(-K_X-D)\bigcup {\rm Nklt}(X,D)$  does not dominate $Y$, then $-K_Y$ is  big. Here ${\rm Nklt}(X,D)$ denotes the \emph{non-klt locus} of $(X,D)$.
	\end{enumerate}
\end{thmx}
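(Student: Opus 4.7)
The plan is to establish both claims by combining the positivity of direct images (Berndtsson--P\u{a}un) with the adjunction identity
\[
K_{X/Y}+D+L=-f^*(K_Y+\Delta),\qquad L:=-K_X-D-f^*\Delta,
\]
in the same spirit as the proof of the pseudo-effectivity statement of Theorem~\ref{DPS01}, but sharpened so as to detect a K\"ahler current on $Y$.

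For part \eqref{claima}, the bigness of $L$ provides a singular hermitian metric $h_L$ on $L$ whose curvature current dominates $\epsilon\omega$ for some K\"ahler form $\omega$ on $X$. The assumption that ${\rm NNef}(L)$ does not dominate $Y$ allows one to arrange, after a Demailly-type regularization of $h_L$, that for a very general $y\in Y$ the restriction $h_L|_{X_y}$ is a well-defined singular metric on $L|_{X_y}$ whose curvature still dominates $\epsilon\omega|_{X_y}$. Tensoring with the klt metric naturally attached to $D$---which restricts to a klt metric on the general fiber, exactly as in Theorem~\ref{DPS01}\eqref{claim2}---yields a positively curved metric on $L+D$ whose fibrewise restriction is klt with a strictly positive component. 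For $m$ sufficiently divisible, after Stein factorization of $f$ one has
\[
f_*\mathcal{O}_X\big(m(K_{X/Y}+D+L)\big)\cong\mathcal{O}_Y\big(-m(K_Y+\Delta)\big).
\]
The Berndtsson--P\u{a}un direct image theorem endows the left-hand side with a natural singular hermitian metric of semi-positive Griffiths curvature, and the strict positivity of $\Theta_{h_L}$ on the general fiber then promotes this downstairs to a K\"ahler current on $Y$, which is exactly the bigness of $-K_Y-\Delta$.

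For part \eqref{claimb}, the plan is to reduce to the klt setting of \eqref{claima} by choosing a log resolution $\mu\colon X'\to X$ of $(X,D)$. Writing $K_{X'}+D'=\mu^*(K_X+D)+E$ with $(X',D')$ klt (all coefficients strictly less than $1$) and $E$ an effective $\mu$-exceptional divisor supported on $\mu^{-1}({\rm Nklt}(X,D))$, the hypothesis on ${\rm NNef}(-K_X-D)\cup{\rm Nklt}(X,D)$ translates into ${\rm NNef}(-K_{X'}-D')$ not dominating $Y$ via $f':=f\circ\mu$. Pseudo-effectivity of $-K_Y$ then follows from Theorem~\ref{DPS01}\eqref{claim1} applied to $f'$, and the upgrade to bigness is obtained by perturbing with a small pullback of an ample class on $Y$ and rerunning the K\"ahler current construction of \eqref{claima}, with the exceptional divisor $E$ together with Kodaira's lemma providing the extra room to keep the perturbed class big on $X'$ while its non-nef locus continues to avoid a general fiber.

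The main technical obstacle is transmitting the strict positivity $\Theta_{h_L}\geq\epsilon\omega$ on general fibers into a K\"ahler current on $Y$: the Berndtsson--P\u{a}un theorem by itself only guarantees semi-positivity of the direct image metric. Closing this gap hinges on a quantitative Ohsawa--Takegoshi extension (in the spirit of Demailly's recent work and P\u{a}un's treatment of plurigenera) to show that the fibrewise $L^2$ construction on the direct image inherits a strictly positive lower bound for its curvature. A secondary subtlety in \eqref{claimb} is that the effective exceptional divisor $E$ arising from the log resolution might introduce new components of the non-nef locus on $X'$, and one must verify that these stay vertical over $Y$ so that the hypothesis of \eqref{claima} can actually be invoked after perturbation.
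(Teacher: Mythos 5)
Your strategy for part (a) diverges from the paper's and, as written, contains a genuine gap at exactly the step you flag as ``the main technical obstacle.'' You propose to endow $f_*\oc_X\big(m(K_{X/Y}+D+L)\big)\cong \oc_Y\big(-m(K_Y+\Delta)\big)$ with the Berndtsson--P\u{a}un metric and then to upgrade its semi-positive curvature to a K\"ahler current on $Y$ using the fibrewise strict positivity of $h_L$. That upgrade is not a known consequence of the direct image machinery: strict positivity of the relative Bergman-type metric in the \emph{base} directions does not follow from strict positivity of $\Theta_{h_L}$ restricted to the fibers, and no quantitative Ohsawa--Takegoshi statement in the paper (or in Demailly's extension theorem) produces such a curvature lower bound downstairs. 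The paper avoids this issue entirely. It uses Kodaira's lemma \emph{upstairs}: since $-\,(K_X+D+f^*\Delta)$ is big, one writes $-a(K_X+D+f^*\Delta)-2f^*A_Y$ as an effective divisor $E$ for a very ample $A_Y$ on $Y$, builds a metric $h_0$ on $L:=-m^2(K_X+D+f^*\Delta)-2f^*A_Y+m^2D$ whose multiplier ideal is trivial on a general fiber (this uses both the klt hypothesis and the non-dominance of the non-nef locus, via a H\"older inequality), and then applies the extension theorem to produce a \emph{nonzero section} of $m^2K_{X/Y}+L+f^*A_Y=f^*\big(-m^2(K_Y+\Delta)-A_Y\big)$. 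Stein factorization plus Boucksom's descent for finite surjective morphisms gives pseudo-effectivity of $-m^2(K_Y+\Delta)-A_Y$, which is precisely the bigness of $-(K_Y+\Delta)$. In short: bigness is transmitted not through curvature of a direct image metric but through an explicit section of ($-m^2(K_Y+\Delta)$ minus an ample), and this is the idea your argument is missing.

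For part (b) there is a second, independent error: you claim that a log resolution $\mu\colon X'\to X$ yields $K_{X'}+D'=\mu^*(K_X+D)+E$ with $(X',D')$ klt and $E$ effective exceptional. This is impossible when $(X,D)$ is genuinely non-klt, since a log discrepancy $\le 0$ forces a coefficient $\ge 1$ in $D'$ that cannot be absorbed into an effective $E$ on the other side. The paper instead keeps $D'$ and $F$ as effective divisors without common components (so $(X',D')$ is only lc/non-klt in general) and uses the hypothesis that ${\rm Nklt}(X,D)$ does not dominate $Y$ to ensure the relevant multiplier ideals are trivial on a \emph{general fiber}, which is all the extension theorem requires; the exceptional $F$ is then removed from the section spaces by normality of $X$, and the passage from $Y'$ back to $Y$ is handled with exceptional divisors as in Lemma \ref{inherience}. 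So the reduction of (b) to a literal application of (a) does not work; one must rerun the fibrewise argument with the weaker, fiber-restricted triviality of multiplier ideals.
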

As a combination of Theorem \ref{DPS01} and \ref{main2}, we prove the following result, which is a generalization of another theorem by Fujino and Gongyo \cite[Theorem 1.1]{FG12}.
\begin{corx}\label{main3}
Let $f:X\rightarrow Y$ be a smooth fibration between smooth projective manifolds. Assume that $\Delta$ is  a (not necessarily effective) $\mathbb{Q}$-divisor over $Y$, $(X,D)$ is klt,  and $(X_y,D_{\upharpoonright X_y})$ is lc for every $y\in Y$.     If $-K_X-D-f^*\Delta$ is big and nef, so is $-K_Y-\Delta$.
\end{corx}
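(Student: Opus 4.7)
The plan is to obtain Corollary \ref{main3} as an immediate combination of Theorem \ref{DPS01}(b), which controls the non-nef locus of $-K_Y-\Delta$, and Theorem \ref{main2}(a), which upgrades pseudo-effectivity to bigness. The key observation is that the hypotheses of the corollary are strong enough to make the exceptional loci appearing in Theorem \ref{DPS01}(b) all vanish, so that the conclusion degenerates to nefness on the nose.

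First I would note three simplifications. Since $-K_X-D-f^*\Delta$ is assumed to be nef, its non-nef locus ${\rm NNef}(-K_X-D-f^*\Delta)$ is empty, which in particular ensures that it does not dominate $Y$; hence the hypotheses of both Theorem \ref{DPS01}(b) and Theorem \ref{main2}(a) on the non-nef locus are automatically satisfied. Second, because $f$ is a smooth fibration, the minimal subvariety $Z\subset Y$ outside which $f$ is smooth may be taken to be empty. Third, the hypothesis that $(X_y, D_{\upharpoonright X_y})$ is lc for every $y\in Y$ allows me to take $Z_D=\emptyset$ in Theorem \ref{DPS01}(b) as well.

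With these reductions in place, I would apply Theorem \ref{DPS01}(b) to the lc pair $(X,D)$ (klt implies lc): pseudo-effectivity follows from nefness, and the conclusion tells us that $-K_Y-\Delta$ is pseudo-effective with its non-nef locus contained in $f(\emptyset)\cup\emptyset\cup\emptyset=\emptyset$, i.e.\ $-K_Y-\Delta$ is nef. Separately, I would apply Theorem \ref{main2}(a) directly, using the klt assumption on $(X,D)$, the bigness of $-K_X-D-f^*\Delta$, and the emptiness of its non-nef locus, to deduce that $-K_Y-\Delta$ is big. Assembling the two conclusions yields that $-K_Y-\Delta$ is big and nef.

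The only potentially delicate point — and hence the main thing to verify carefully in the write-up — is that the set $Z_D$ supplied by Theorem \ref{DPS01}(b) may indeed be chosen to be empty under the fiberwise lc assumption of the corollary; after that, the corollary is a purely formal consequence of the two theorems from the body of the paper.
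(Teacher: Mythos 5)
Your proposal is correct and follows essentially the same route as the paper's own proof: nefness of $-K_X-D-f^*\Delta$ makes the non-nef locus empty, smoothness of $f$ gives $Z=\emptyset$, the fiberwise lc hypothesis gives $Z_D=\emptyset$, so Theorem \ref{DPS01}(b) yields nefness of $-K_Y-\Delta$, and Theorem \ref{main2}(a) yields its bigness.
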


Finally, we apply Claim \eqref{claimb} in Theorem \ref{main2} to refine a result by Broustet and Pacienza on the rational connectedness of the image (compared to Theorem \ref{BP} below):
\begin{thmx}\label{RC}
	Let $X$ be a normal projective variety and $D$ an
	effective $\mathbb{Q}$-divisor on $X$ such that $K_X+D$ is $\mathbb{Q}$-Cartier. Let
	$Y$ be a normal and $\mathbb{Q}$-Gorenstein projective variety. If $f : X \rightarrow Y$ is
	a surjective morphism such that $-(K_X+D)$ is big and the restriction of $f$ to ${\rm NNef}(-K_X-D)\bigcup {\rm Nklt}(X,D)$  does not dominate $Y$,  then $Y$ is rational connected  modulo  
	${\rm NNef}(-K_Y)$, that is, there
	exists an irreducible 
	component $V$ of ${\rm NNef}(-K_Y)$ such that for any general point $y$ of $Y$ there exists a rational 
	curve $R_y$ passing through $y$ and intersecting
	$V$. 
\end{thmx}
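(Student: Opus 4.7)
The plan is to factor Theorem \ref{RC} into two pieces: use Claim \eqref{claimb} of Theorem \ref{main2} to produce bigness of $-K_Y$ from the given data, and then invoke the rational curve construction of Broustet--Pacienza (Theorem \ref{BP}) applied to $Y$ itself. Our improvement over Theorem \ref{BP} is that the positivity input on $Y$ is now obtained under the weaker hypothesis that the restriction of $f$ to the non-nef and non-klt loci does not dominate $Y$, which is precisely the content of \ref{main2}(b).

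The first step is to observe that the hypotheses of Theorem \ref{RC} match verbatim those of Claim \eqref{claimb} in Theorem \ref{main2}: $X$ is normal projective, $D$ is an effective $\mathbb{Q}$-divisor with $K_X+D$ $\mathbb{Q}$-Cartier, $Y$ is normal projective and $\mathbb{Q}$-Gorenstein, $-(K_X+D)$ is big, and $f$ restricted to ${\rm NNef}(-K_X-D)\cup{\rm Nklt}(X,D)$ does not dominate $Y$. Applying \ref{main2}(b) directly yields that $-K_Y$ is big.

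The second step is to produce, for a general point $y\in Y$, a rational curve $R_y$ through $y$ meeting a fixed irreducible component $V$ of ${\rm NNef}(-K_Y)$. This is exactly the Broustet--Pacienza mechanism applied to the pair $(Y,0)$ with big anticanonical divisor: the construction uses the cone theorem on a resolution of $Y$ together with bend-and-break fueled by the bigness of $-K_Y$ to produce a covering family of rational curves whose members through a general point must intersect the subvariety where $-K_Y$ fails to be nef. Invoking Theorem \ref{BP} in this absolute situation yields the required curves and hence the rational connectedness of $Y$ modulo ${\rm NNef}(-K_Y)$.

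The main subtlety is confirming that Theorem \ref{BP} can be applied directly to $Y$ with its trivial boundary and big $-K_Y$, i.e.\ that no extra klt-type hypothesis on $Y$ is needed beyond what we have already arranged. If some additional condition is present in \ref{BP}, one reduces to it by passing to a small $\mathbb{Q}$-factorial modification of $Y$ and exploiting the birational invariance of rational connectedness modulo a Zariski-closed locus. Since all the new analytic content is already encapsulated in \ref{main2}(b), this last verification is expected to be routine.
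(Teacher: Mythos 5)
Your proposal is correct and coincides with the paper's own proof, which is exactly the two-step argument you describe: apply Claim (b) of Theorem \ref{main2} to conclude that $-K_Y$ is big, then invoke Broustet--Pacienza's result \cite[Theorem 1.2]{BrP11} for the pair $(Y,0)$ to obtain rational connectedness modulo ${\rm NNef}(-K_Y)$. The klt-type subtlety on $Y$ that you flag is not addressed in the paper either, which applies \cite[Theorem 1.2]{BrP11} to $Y$ directly.
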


\section{\sc Technical Preliminaries}\label{preliminary}
\subsection{Definitions and Notations}
\begin{dfn}
	A pair consists of a normal variety $X$, together with a Weil $\mathbb{Q}$-divisor $\Delta=\sum d_i\Delta_i$ on $X$, such that the $\mathbb{Q}$-divisor  $K_X+\Delta$ is $\mathbb{Q}$-Cartier on $X$.
\end{dfn}
For a pair $(X,\Delta)$ with $\Delta$ effective, the non-klt locus of  is defined by 
$$
{\rm Nklt}(X,\Delta):=\{x\in X| \jc(X,\Delta)_x\neq \oc_{X,x} \}.
$$
Let us recall the definitions of non-nef locus and restrict base locus for any pseudo-effective line bundle on the normal projective variety in \cite[Definition 1.5]{BBP13}:
\begin{dfn}\label{restrict} Let $X$ be a normal projective variety. Let $D$ be a pseudo-effective $\mathbb{R}$-divisor on $X$. The non-nef locus of $D$ is
	defined as
	$${\rm NNef}(D):= \{	c_X(v) | v(\lVert D\rVert)> 0\},$$
	where $c_X(v)$ denotes the center on $X$ of a given divisorial valuation $v$. The restricted base locus of $D$ is defined by 
	$$
	\mathbf{B}_-(D):=\bigcup_{m>0} \mathbf{B}(D+\frac{1}{m}A),
	$$
	where $A$ is an ample divisor, and $\mathbf{B}(\bullet)$ denotes \emph{stable base locus} of the $\mathbb{R}$-divisor.
\end{dfn}
 It was proved in \cite[Lemma 1.6]{BBP13} that, 
$$
{\rm NNef}(D)\subset 	\mathbf{B}_-(D),
$$
and equality was shown to hold when $X$ is smooth.
\subsection{Seshadri Constants}

In the work \cite{Dem92}, Demailly define the following \emph{Seshadri constant}:
\begin{dfn}
	Le $L$
	be a nef line bundle over a projective algebraic manifold $X$. To every point
	$x\in X$, one defines the number
	$$
	\epsilon(L,x):=\inf \frac{L\cdot C}{\nu(C,x)}
	$$
	where the infimum is taken over all reduced irreducible curves $C$ passing through $x$
	and $\nu(C,x)$ is the multiplicity of $C$ at $x$.  $\epsilon(L,x)$ will be called the \emph{Seshadri constant} $L$ at $x$.
\end{dfn}

\medskip

On the other hand, Demailly also introduced another constant $\gamma(L,x)$ for any nef line bundle $L$. First, we begin with the following definition.
\begin{dfn}
	A function $\psi:X\rightarrow ]-\infty,+\infty]$ on a complex manifold
	$X$ of dimension $m$ is said to
	be
	quasi-plurisubharmonic  (quasi-psh for short) if $\psi$ is  locally  the  sum  of  a  psh  function  and  of
	a  smooth  function
	(or  equivalently,  if $\hess \psi$ is  locally  bounded  from  below)
	.  In  addition,
	we  say  that $\psi$ has
	neat analytic singularities
	if  every  point $x\in X$
	possesses  an  open
	neighborhood $U$ on which $\psi$
	can be written
	$$\psi=c\log \sum_{j=1}^{N}|g_j|^2+w(z) $$
	where $g_j\in \oc(U)$, $c\geq 0$ and $w(z)\in \cs^{\infty}(U)$.
\end{dfn}

\begin{dfn}
	A singular metric $h$ on the line bundle $L$   
	is said to have a \emph{logarithmic pole of coefficient $\nu$} at a point $x\in X$,
	if  on a neighborhood $U$ of $x$,  the local weight $\varphi$ of $h$ can be written
	$$
	\varphi=\nu \log \sum |z-x|^2+ w(z)
	$$
	where $\nu> 0$ and $w(z)\in \cs^{\infty}(U)$. In this setting, we set $\nu(h,x):=\nu$.
\end{dfn} 
Then we set
$$\gamma(L,x):=\sup_h \nu(h,x),$$ where the supremum  is taken over all singular hermitian metrics $h$ of $L$ with positive curvature current, whose local weight $\varphi$ has neat singularities and logarithmic poles at $x$. 

The numbers $\epsilon(L,x)$ and $\gamma(L,x)$ will be seen to carry a lot of useful information about the local positivity of $L$. In case $L$ is big and nef,   these two constants coincide outside a certain proper subvariety of $X$ (see \cite[Theorem 6.4]{Dem92})
\begin{thm}[Demailly]\label{Demailly constant}
	Let $L$ be a big and nef line bundle over $X$. Then we have
	$$
	\epsilon(L,x)=\gamma(L,x)
	$$
	for any $x\notin \mathbf{B}_+(L)$, where $\mathbf{B}_+(L)$ is the  augmented base locus of $L$ (see \cite[Definition 10.2.2]{Laz04}). In particular, if $L$ is ample, then $\epsilon(L,x)=\gamma(L,x)$ holds everywhere.
\end{thm}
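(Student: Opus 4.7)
The plan is to prove the two inequalities $\gamma(L,x)\leq\epsilon(L,x)$ and $\epsilon(L,x)\leq\gamma(L,x)$ separately; only the second uses the hypothesis $x\notin\mathbf{B}_+(L)$.

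For the direction $\gamma(L,x)\leq\epsilon(L,x)$, which I expect to hold with no extra hypothesis, I would use Lelong number comparison. Given a singular metric $h$ on $L$ with curvature current $T=\frac{\sn}{2\pi}\partial\db\log(1/h)$ and a logarithmic pole of coefficient $\nu$ at $x$, the Lelong number of $T$ at $x$ is $\nu(h,x)=\nu$. For a reduced irreducible curve $C$ through $x$ with multiplicity $m=\nu(C,x)$, Demailly's comparison of Lelong numbers along submanifolds (or direct integration on the normalization) yields $L\cdot C=\int_C T\geq \nu\cdot m$, hence $\nu\leq L\cdot C/\nu(C,x)$; taking the infimum over $C$ and then the supremum over $h$ gives $\gamma(L,x)\leq\epsilon(L,x)$.

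For the harder inequality, I would pass to the blow-up $\pi\colon \tl X\to X$ at $x$ with exceptional divisor $E$. Seshadri's criterion for nef line bundles identifies $\epsilon(L,x)$ with the supremum of rationals $\lambda\geq 0$ such that $\pi^*L-\lambda E$ is nef, so for each $\lambda<\epsilon(L,x)$ the class $\pi^*L-\lambda E$ is nef on $\tl X$. A volume computation, using $(\pi^*L)\cdot E=0$ and $E^n=(-1)^{n-1}$, gives $(\pi^*L-\lambda E)^n=L^n-\lambda^n$; combined with the a priori bound $\epsilon(L,x)^n\leq L^n$ coming from nefness, this shows $(\pi^*L-\lambda E)^n>0$, hence $\pi^*L-\lambda E$ is big and nef. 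Then Kodaira's lemma produces an integer $m\gg 0$ and a basis of sections $s_1,\dots,s_N$ of $H^0(\tl X,m(\pi^*L-\lambda E))$, which correspond via $\pi_*$ to sections $\sigma_1,\dots,\sigma_N$ of $mL$ on $X$ vanishing at $x$ to order $\geq m\lambda$.

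The metric $h:=\bigl(\sum|\sigma_i|^2\bigr)^{-1/m}$ is a singular hermitian metric on $L$ with positive curvature current and, in a local trivialization near $x$, weight $\varphi=-\frac{1}{2m}\log\sum|\sigma_i|^2$, which is of the form $c\log\sum|g_j|^2+w$ with $c=1/(2m)$, giving neat analytic singularities. The vanishing order $\geq m\lambda$ at $x$ translates into a logarithmic pole of coefficient at least $\lambda$ at $x$, so $\nu(h,x)\geq\lambda$ and thus $\gamma(L,x)\geq\lambda$; letting $\lambda\nearrow\epsilon(L,x)$ concludes the proof.

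The main obstacle is the step asserting that the sections $\sigma_i$ genuinely realize a log pole at $x$ with the expected coefficient, i.e.~that no parasitic higher-order vanishing along $E$ spoils the correspondence between sections on $\tl X$ and the Lelong number at $x$. This is exactly where the hypothesis $x\notin\mathbf{B}_+(L)$ enters: it guarantees that $x$ does not lie in any augmented base locus upstairs, so a sufficiently general section of $m(\pi^*L-\lambda E)$ is nonzero at the generic point of $E$, pinning the Lelong number down to $\lambda$. If desired one can replace this step by Demailly's analytic approximation applied to the big and nef class $\pi^*L-\lambda E$, producing directly a singular metric with analytic singularities of Lelong number zero outside $\mathbf{B}_+(\pi^*L-\lambda E)$, then pushing forward by $\pi$.
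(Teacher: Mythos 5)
The paper offers no proof of this statement --- it is quoted from \cite[Theorem 6.4]{Dem92} --- so there is nothing internal to compare against; I will judge your argument on its own terms. Your architecture is the standard one, and the direction $\gamma(L,x)\leq\epsilon(L,x)$ is correct as sketched: pulling the curvature current back to the normalization of $C$ and comparing Lelong numbers at the preimages of $x$ gives $L\cdot C\geq \nu(h,x)\,\nu(C,x)$, and near $x$ the polar set of the metric is just $\{x\}$, so $C$ is not contained in it and the computation is legitimate. The numerics in the converse direction are also right: $(\pi^*L-\lambda E)^n=L^n-\lambda^n>0$ for $\lambda<\epsilon(L,x)\leq (L^n)^{1/n}$, so $\pi^*L-\lambda E$ is nef and big.

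The genuine gap is exactly at the step you flag, and neither of your proposed fixes closes it. The definition of $\gamma(L,x)$ requires the weight near $x$ to be $\nu\log\sum|z-x|^2$ plus a controlled term; in particular the pole at $x$ must be \emph{isolated} in a neighbourhood of $x$. The metric $\bigl(\sum|\sigma_i|^2\bigr)^{-1/m}$ has this property only if the common zero locus of the $\sigma_i$ near $x$ is exactly $\{x\}$, i.e.\ only if ${\rm Bs}\,|m(\pi^*L-\lambda E)|$ has no positive-dimensional component meeting $E$ other than subsets of $E$. Your appeal to $x\notin\mathbf{B}_+(L)$ does not deliver this: $\mathbf{B}_+(\pi^*L-\lambda E)$ is not controlled by $\pi^{-1}\bigl(\mathbf{B}_+(L)\bigr)$ (the Seshadri inequality rules out null \emph{curves} through $E$, but nothing is said about null subvarieties of dimension $\geq 2$), and ``a general section is nonzero at the generic point of $E$'' only pins down the coefficient of the pole, not its isolatedness. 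When $L$ is ample --- the only case the paper actually uses --- the gap can be closed by showing that $\pi^*L-\lambda E$ is \emph{ample} for rational $0<\lambda<\epsilon(L,x)$: since $\pi^*L-\tfrac1m E$ is ample for $m\gg0$, write $\pi^*L-\lambda E=t\bigl(\pi^*L-\tfrac1m E\bigr)+(1-t)\bigl(\pi^*L-\tfrac{\lambda-t/m}{1-t}E\bigr)$ with $t>0$ small so the second summand is nef; then a multiple of $\pi^*L-\lambda E$ is very ample, its sections push down to sections of $mL$ with no common zeros away from $x$ and vanishing to order exactly $m\lambda$ at $x$, and one gets a genuine isolated logarithmic pole of coefficient $\lambda$ (with a bounded, though not smooth, remainder --- the smoothness of $w$ in the paper's definition is an over-strict restatement of Demailly's ``isolated logarithmic pole''). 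For $L$ merely big and nef with $x\notin\mathbf{B}_+(L)$ this convexity trick fails, the statement is substantially harder, and your argument as written does not establish it; that is precisely the content being imported from \cite{Dem92}.
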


\medskip

As we mentioned in Section \ref{introduction}, in \cite{EKL95}, Ein, K\"uchle and Lazarsfeld gave the existence of universal generic bounds for the Seshadri constants in a fixed dimension.
\begin{thm}[Ein-K\"uchle-Lazarsfeld]\label{EKL}
	Let $Y$ be an irreducible projective variety of dimension $n$, and $L$ a nef line bundle on $Y$.  Suppose there exists a countable union $\mathcal{B}\subset Y$ of proper subvarieties of
	$Y$ 	plus a positive real number $\alpha>0$ 	such that
	\begin{eqnarray}
	L^r\cdot Z\geq (\alpha\cdot r)^r
	\end{eqnarray}
	for every irreducible subvariety $Z\subset Y$ of dimension $r$ ($1\leq r\leq n$) with $Z\not\subseteq \mathcal{B}$.
	Then $$\epsilon(L,y)\geq\alpha$$
	for all $y\in Y$ outside a countable union of proper subvarieties in $Y$.
	In particular, for any ample line bundle $L$ on $Y$,
	\begin{eqnarray}\label{lower}
	\epsilon(L,y)\geq \frac{1}{n}
	\end{eqnarray}
	for a very general point $y$.
\end{thm}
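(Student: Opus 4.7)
The plan is to argue by contrapositive: if $\epsilon(L, y) < \alpha$ on a subset $S \subset Y$ that is not contained in any countable union of proper subvarieties, then I aim to produce an irreducible subvariety $Z$ of some dimension $r$, not contained in $\mathcal{B}$, with $L^r \cdot Z < (\alpha r)^r$, contradicting the hypothesis.

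First, I reduce to a positive-dimensional algebraic family. Fix $\alpha' < \alpha$ and assume $\epsilon(L, y) < \alpha'$ on such an $S$. By definition of $\epsilon(L,\cdot)$, each $y \in S$ lies on a reduced irreducible curve $C_y$ with $L \cdot C_y < \alpha' \cdot \nu(C_y, y)$; moreover $\nu(C_y, y) \geq 2$, because the hypothesis for $r=1$ forces $L \cdot C \geq \alpha > \alpha'$ on every irreducible curve $C \not\subseteq \mathcal{B}$. The curves $C_y$ belong to only countably many irreducible components of the Chow variety of $Y$, so a pigeonhole combined with the assumption on $S$ yields one component $T$ carrying a flat family $\{C_t\}_{t\in T}$ together with a section $y \colon T^\circ \to Y$ over a dense open $T^\circ \subseteq T$, marking the singular point on $C_t$, such that $y(T^\circ)$ meets $Y \setminus \mathcal{B}$ and the discrete invariants $L \cdot C_t$ and $\mu := \nu(C_t, y(t)) \geq 2$ are constant on $T^\circ$ with $L \cdot C_t < \alpha' \mu$.

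The second step, which is the technical heart of Ein--K\"uchle--Lazarsfeld, is a sweep-out intersection estimate. Let $W \subset Y$ be the image of the universal curve over $T^\circ$; it is irreducible of some dimension $r$ with $2 \leq r \leq n$, it contains $y(T^\circ)$, and it is not contained in $\mathcal{B}$. The key inequality I need to establish is
$$
L^r \cdot W \,<\, (\alpha' \cdot r)^r.
$$
Its proof combines specialization of cycles on the universal family, Bertini/general-position reductions, and multiplicity inequalities; the factor $r^r$ arises because the sweep $W$ acquires multiplicity $\mu$ along each of $r$ essentially independent directions at the moving marked point $y(t)$. Taking $Z := W$ and then letting $\alpha' \nearrow \alpha$ contradicts the hypothesis $L^r \cdot Z \geq (\alpha r)^r$ and completes the proof of the main statement.

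The main obstacle is precisely this sweep-out intersection inequality with the sharp combinatorial constant $r^r$; everything else is a routine pigeonhole and Zariski-density exercise. The final statement for ample $L$ follows by taking $\alpha = 1/n$: the hypothesis $L^r \cdot Z \geq (r/n)^r$ is then automatic, because $L^r \cdot Z$ is a positive integer for ample $L$ and $(r/n)^r \leq 1$ for every $1 \leq r \leq n$.
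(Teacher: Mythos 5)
The paper does not prove this statement at all: Theorem \ref{EKL} is quoted verbatim from \cite{EKL95} and used as a black box, so there is no internal proof to compare against. Judged on its own terms, your sketch correctly reproduces the outer shell of the Ein--K\"uchle--Lazarsfeld argument (contrapositive, the observation that $\nu(C_y,y)\geq 2$ once $y\notin\mathcal{B}$, countability of the Chow/Hilbert components to extract a single family $\{C_t\}$ with constant invariants, and the reduction of the ample case to $\alpha=1/n$ via $L^r\cdot Z\geq 1\geq (r/n)^r$, which is fine). But the proof has a genuine gap exactly where you flag it: the inequality $L^r\cdot W<(\alpha' r)^r$ for the swept-out variety $W$ is asserted, not proved, and the phrase ``multiplicity $\mu$ along each of $r$ essentially independent directions'' is not an argument. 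Worse, the single-sweep statement you propose is probably not even the correct intermediate claim. In \cite{EKL95} the estimate is obtained by an induction on dimension: starting from the family of curves one builds a chain of families of subvarieties $V_1\subset V_2\subset\cdots$ of strictly increasing dimension, at each step either the members through a very general point stabilize (and one bounds $L^d\cdot V_d$ using the multiplicity accumulated so far) or one ``integrates'' the family to raise the dimension while the multiplicity at the moving marked point grows by at most an additive step; the factor $r^r$ is the product of the $r$ linear multiplicity gains, and it is only the terminal variety of this chain that violates the degree hypothesis. Without this inductive mechanism (EKL's key lemma on families of subvarieties with a moving point of high multiplicity), the proof is incomplete.

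One smaller point: to run the pigeonhole you must also discard the points of $S$ lying in $\mathcal{B}$ before choosing $C_y$; this is harmless (a countable union of proper subvarieties can be absorbed into the exceptional locus of the conclusion, and an irreducible curve through a point off $\mathcal{B}$ cannot be contained in $\mathcal{B}$ over an uncountable base field), but it should be said, since your deduction $\nu(C_y,y)\geq 2$ silently uses $C_y\not\subseteq\mathcal{B}$.
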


The above theorem gives a lower bound on the Seshadri constant of
a nef and big line bundle at a very general point. However, as was also proved in \cite{EKL95}, for the ample line bundle, the above theorem is valid on a Zariski-open set by the semi-continuity of the Seshadri constant of the ample line bundle. In other word, let
$L$ be an ample line bundle on an irreducible projective variety $Y$. Suppose that there is a positive rational number $B$
and a smooth point $y\in Y$
for
which one knows that
$$
\epsilon(L,y)>B.
$$
Then the locus
$$
\{z\in Y| \epsilon(L,z)>B\}
$$
contains a Zariski-open dense set in $Y$.

\subsection{$L^2$ Extension Theorem}

Before we state Demailly's Ohsawa-Takegoshi type Extension Theorem,  we begin with a definition in \cite{Dem15}.

\begin{dfn}
	If $\psi$ is a quasi-psh function on a complex manifold $X$, the multiplier
	ideal sheaf $\jc(\psi)$ is the coherent analytic subsheaf of $\oc_X$ defined by
	$$\jc(\psi)_x:=\{f\in \oc_{X,x}; \exists U\ni x, \int_U|f|^2e^{-\psi}\dm \lambda<+\infty \}$$
	where
	$U$ is an open coordinate neighborhood of $x$, and $\dm \lambda$  the standard Lebesgue measure
	in the corresponding open chart of $\cb^n$.  We say that the singularities of $\psi$
	are log canonical
	along the zero variety
	$Y:=V(\jc(\psi))$ if $\jc\big((1-\epsilon)\psi\big)_{\upharpoonright Y}=\oc_{X\upharpoonright Y}$ for every $\epsilon>0$.
\end{dfn}

If $\psi$ possesses both neat and log canonical singularities, it is easy to show that the zero scheme $V(\jc(Y))$ is a reduced variety. In this case one  can  also  associate  in  a
natural way a measure $dV_{Y^\circ,\omega}[\psi]$ on the set $Y^\circ:=Y^{\rm reg}$ of regular points of $Y$ as follows. If
$g\in \cs_c(Y^\circ)$ is a compactly supported continuous function on $Y^\circ$,
and $\tilde{g}$ compactly supported extension of $g$ to $X$, we set
\begin{eqnarray}\label{measure}
\int_{Y^\circ} g dV_{Y^\circ,\omega}[\psi]:=\limsup_{t\rightarrow -\infty} \int_{x\in X, t<\psi(x)<t+1} \tilde{g}(x) dV_{X,\omega}.
\end{eqnarray}
Here $\omega$ is a K\"ahler metric on $X$, and $dV_{X,\omega}=\frac{\omega^m}{m!}$. In \cite{Dem15} Demailly proved that the limit does not depend on the continuous extension $\tilde{g}$, and one gets in this way a measure with smooth positive density with respect to the Lebesgue measure, at least on an (analytic) Zariski open set in $Y^\circ$. 

\medskip

We are ready to recall the Ohsawa-Takegoshi type extension Theorem by Demailly. We only need a special case of his very general statement:
\begin{thm}[Demailly]\label{OT}
	Let $X$
	be  a smooth projective  manifold,  and
	$\omega$ a  K\"ahler  metric  on $X$.   Let $L$
	be
	a  holomorphic  line  bundle  equipped  with  a  (singular)  hermitian  metric
	$h$ on X,  and  let
	$\psi:X\rightarrow ]-\infty,+\infty]$
	be  a  quasi-psh  function  on $X$ with  neat  analytic  singularities.  Let
	$Y$
	be  the  analytic  subvariety  of
	$X$ defined  by $Y=V(\jc(Y))$
	and  assume  that
	$\psi$
	has  log canonical  singularities  along
	$Y$,  so  that $Y$ is reduced. Finally, assume that the curvature current
	$$
	{\rm i}\Theta_{L,h}+\alpha\hess \psi\geq 0
	$$
	for all $\alpha\in [1,1+\delta]$ and some $\delta>0$. Then for every section $s\in H^0\big(Y^\circ,(K_X\otimes L)_{\upharpoonright Y^\circ}\big)$ on $Y^\circ:=Y^{\rm reg}$ such that
	$$
	\int_{Y^\circ} |s|^2_{\omega,h}dV_{Y^\circ,\omega}[\psi]<+\infty,
	$$
	there is an extension of $S\in H^0(X,K_X\otimes L)$ whose restriction to $Y^\circ$ is equal to $s$, such that
	$$
	\int_{X} \gamma(\delta \psi)|S|^2_{\omega,h}e^{-\psi}dV_{X,\omega}\leq\frac{34}{\delta}\int_{Y^\circ} |s|^2_{\omega,h}dV_{Y^\circ,\omega}[\psi].
	$$
\end{thm}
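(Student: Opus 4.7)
The plan is to adapt the classical Ohsawa-Takegoshi $L^2$ extension method to this general log canonical setting, in the spirit of H\"ormander's $L^2$ technique for the $\bar\partial$-operator combined with a suitably twisted Bochner-Kodaira inequality. First I would reduce the extension problem to an inhomogeneous $\bar\partial$-equation: since the regular locus $Y^\circ=Y^{\rm reg}$ is smooth and $s$ is holomorphic on it, produce a $\cs^\infty$ extension $\tilde s$ of $s$ to a neighborhood of $Y^\circ$ in $X$, multiply by a cut-off function $\chi$ identically equal to $1$ in a smaller neighborhood of $Y$, and then seek $v$ with $\bar\partial v=\bar\partial(\chi \tilde s)$ and $v_{\upharpoonright Y^\circ}=0$; the candidate extension is then $S:=\chi\tilde s-v$.

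The core step is an $L^2$ bound for $v$, obtained by applying a twisted version of H\"ormander's theorem on the bundle $K_X\otimes L$ equipped with the singular metric $h\cdot e^{-\psi}$ together with an additional multiplier of the form $\gamma(\delta\psi)$ for a convex increasing function $\gamma$ to be chosen. The curvature hypothesis ${\rm i}\Theta_{L,h}+\alpha\hess\psi\geq 0$ for every $\alpha\in[1,1+\delta]$ is precisely what supplies positive curvature for a one-parameter family of twisted weights, and integrating this family in $\alpha$ (or equivalently running a Donnelly-Fefferman / Berndtsson-type absorption argument with an auxiliary weight) yields both the multiplier $\gamma(\delta\psi)$ on the left and the factor $1/\delta$ on the right of the estimate; careful bookkeeping of the constants delivers the explicit $34/\delta$.

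Third, I would pass to the limit as the cut-off $\chi$ concentrates on $Y$. By construction $\bar\partial(\chi\tilde s)$ is supported in a shrinking tube around $Y$; when the resulting right-hand side is re-expressed as a tube integral $\int_{t<\psi<t+1}(\cdots)dV_{X,\omega}$ and compared with the very definition \eqref{measure} of $dV_{Y^\circ,\omega}[\psi]$, it converges (up to the expected constant) to $\int_{Y^\circ}|s|^2_{\omega,h}dV_{Y^\circ,\omega}[\psi]$. The resulting $L^2$ control on $v$, together with the non-integrability of $e^{-\psi}$ along $Y^\circ$, forces $v_{\upharpoonright Y^\circ}=0$, and so $S$ is the desired holomorphic extension with the stated estimate.

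The hardest part is making this limit argument rigorous in the generality of a reduced subvariety $Y=V(\jc(\psi))$ cut out by the multiplier ideal of a quasi-psh function with only log canonical (rather than smooth hypersurface) singularities. One must verify that the $\limsup$ in \eqref{measure} genuinely produces a Radon measure with smooth positive density on a Zariski open subset of $Y^\circ$, so that the $L^2$ estimate passes cleanly to the limit; this is precisely where the hypothesis of \emph{neat} analytic singularities enters, furnishing a local normal-crossings model on which the tube integral can be computed explicitly. In parallel, $\psi$ itself must be regularized by a decreasing sequence $\psi_\varepsilon\downarrow\psi$ preserving the curvature inequality across the whole interval $[1,1+\delta]$ (using Demailly's regularization theorem), so that the main estimate can first be established for smooth data and then pushed to the limit with uniform constants; it is this compatibility requirement that forces the curvature positivity to hold on the open interval of values of $\alpha$, rather than merely at $\alpha=1$.
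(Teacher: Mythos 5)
A point of comparison first: the paper does not prove this statement. Theorem \ref{OT} is quoted (as a special case) from Demailly's paper \cite{Dem15}, so there is no in-paper argument to measure your proposal against; what you have written is an outline of how one would reprove Demailly's theorem. As an outline it does follow the genuine strategy of \cite{Dem15} and of the original Ohsawa--Takegoshi theorem: reduce to a $\db$-equation for the error term $v=\chi\tilde s-S$, prove a twisted a priori inequality whose positivity is supplied by the curvature hypothesis over the whole interval $\alpha\in[1,1+\delta]$, and let the cut-off concentrate so that the tube integrals converge to $\int_{Y^\circ}|s|^2_{\omega,h}\,dV_{Y^\circ,\omega}[\psi]$ by the very definition \eqref{measure}.

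As a proof, however, it has two concrete gaps. First, the entire quantitative content of the theorem --- the specific function $\gamma$, the factor $1/\delta$, and the constant $34$ --- is compressed into ``careful bookkeeping of the constants.'' In Demailly's argument these come from an explicit choice of auxiliary convex functions obtained by an ODE-type optimization inside the twisted Bochner--Kodaira--Nakano inequality; nothing in your sketch determines $\gamma$, nor explains why positivity is needed on the full interval $[1,1+\delta]$ rather than only at $\alpha=1$. Second, the assertion that ``the non-integrability of $e^{-\psi}$ along $Y^\circ$ forces $v_{\upharpoonright Y^\circ}=0$'' does not follow from the estimate you actually obtain: the weight in the conclusion is $\gamma(\delta\psi)e^{-\psi}$, and since $\gamma(\delta\psi)\sim 1/(\delta^2\psi^2)$ as $\psi\to-\infty$, this weight \emph{is} locally integrable near $Y$ in the log canonical case (for $\psi=2\log|z_1|$ one has $\int_0 dr/\big(r(\log r)^2\big)<+\infty$), so finiteness of $\int_X\gamma(\delta\psi)|v|^2e^{-\psi}$ does not force $v$ to vanish on $Y$. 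One must instead solve the $\db$-equation with truncated weights $\max(\psi,t)$ (or an equivalent device), obtain bounds uniform in $t$, and track that the limiting solution vanishes on $Y$. Relatedly, the singular metric $h$ must itself be regularized, with the attendant loss of positivity on a projective manifold, which is a separate approximation layer your sketch does not address.
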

Here we set 
\begin{equation}
\gamma=\left\{
\begin{aligned}
&e^{-\frac{x}{2}} &\text{if} \quad x\geq 0,\\
&\frac{1}{1+x^2} &\text{if} \quad x<0.
\end{aligned}
\right.\nonumber
\end{equation}
\medskip

A direct consequence of	Theorem \ref{OT} is the following extension theorem for fibrations:
\begin{cor}\label{fibration extension}
	Let $f:X\rightarrow Y$ be a surjective morphism between smooth manifolds. For any ample line bundle $L$ on $Y$, any regular value $y$ of $f$, if the Seshadri constant of $L$ satisfies that
	\begin{eqnarray}\label{Seshadri condition}
	\epsilon(L,y)>{\rm dim}(Y)=n,
	\end{eqnarray}
	then for any pseudo-effective line bundle $L_1$ over $X$ with a singular hermitian metric $h$ such that $\Theta_{L_1,h}\geq0$, and the restriction of $h$ to $X_y$ is not identically zero, any section $s$ of
	$$
	H^0\big(X_y,(K_X\otimes f^*L\otimes L_1)_{\upharpoonright X_y}\otimes \jc(h_{\upharpoonright X_y})\big).
	$$
	can always be extended to a global one 
	$$
	S\in H^0\big(X, K_X\otimes f^*L\otimes L_1\big)
	$$
	with certain $L^2$ estimates which do not depend on $L_1$.
\end{cor}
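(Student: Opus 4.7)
The plan is to deduce this statement directly from Demailly's extension theorem (Theorem \ref{OT}) by manufacturing a suitable quasi-psh weight $\psi$ on $X$ whose zero variety is the fiber $X_y$. The hypothesis $\epsilon(L,y)>n$ together with Theorem \ref{Demailly constant} gives $\gamma(L,y)>n$, so one can choose a real number $\nu$ with $n<\nu\leq\epsilon(L,y)$ and a singular hermitian metric $h_L$ on $L$ with positive curvature current, neat analytic singularities, and a logarithmic pole of coefficient $\nu$ at $y$. Fixing an auxiliary smooth positively curved metric $h_0$ on $L$, write $h_L=h_0\,e^{-\phi}$ with $\phi$ a globally defined quasi-psh function on $Y$ satisfying $i\Theta_{h_L}=i\Theta_{h_0}+\hess\phi\geq 0$.

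Next I would set $\psi:=\frac{n}{\nu}\,f^{*}\phi$ on $X$ and apply Theorem \ref{OT} to the line bundle $f^{*}L\otimes L_1$ equipped with the metric $f^{*}h_0\otimes h$. The curvature condition reduces, for $\alpha\in[1,\nu/n]$, to
\[
i\Theta_{f^{*}L\otimes L_1}+\alpha\hess\psi=\bigl(1-\tfrac{\alpha n}{\nu}\bigr)f^{*}i\Theta_{h_0}+\tfrac{\alpha n}{\nu}f^{*}i\Theta_{h_L}+i\Theta_{L_1,h}\geq 0,
\]
which is satisfied with the slack $\delta=\nu/n-1>0$. Since $y$ is a regular value of $f$, we can pick local coordinates $(w_1,\ldots,w_m)$ near any point of $X_y$ such that $f=(w_1,\ldots,w_n)$ and $X_y=\{w_1=\cdots=w_n=0\}$; then $\psi=n\log\sum_{j=1}^{n}|w_j|^2+O(1)$, which shows that $\psi$ has neat analytic singularities, that its multiplier ideal equals the ideal sheaf of $X_y$ in a neighborhood of $X_y$, and that $\psi$ has log canonical singularities along $X_y$. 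A direct polar-coordinate computation identifies the residue measure $dV_{Y^{\circ}\!,\omega}[\psi]$ on $X_y$ with the standard Lebesgue-type volume form on $X_y$ (up to a harmless positive factor).

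With these ingredients in place, I would argue as follows. The hypothesis that $h_{\upharpoonright X_y}$ is not identically zero and that $s$ is a section of the sheaf twisted by $\jc(h_{\upharpoonright X_y})$ ensures $\int_{X_y}|s|^2_{\omega,h}\,dV_{X_y,\omega}<+\infty$. If other components of $V(\jc(\psi))$ appear (coming from further singularities of $\phi$ on $Y$ away from $y$), they lie over points distinct from $y$ and hence are disjoint from $X_y$, so one extends $s$ by zero there; this preserves finiteness of the $L^2$ norm. Theorem \ref{OT} then yields a global section $S\in H^0(X,K_X\otimes f^{*}L\otimes L_1)$ whose restriction to $X_y$ coincides with $s$ and which satisfies
\[
\int_X\gamma(\delta\psi)\,|S|^2_{\omega,f^{*}h_0\otimes h}\,e^{-\psi}\,dV_{X,\omega}\leq\tfrac{34}{\delta}\int_{X_y}|s|^2_{\omega,h}\,dV_{X_y,\omega}.
\]
Because $\psi$ and the auxiliary metric $h_0$ are built exclusively out of data on $Y$, the constant in this estimate is entirely independent of $L_1$, as required.

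The main point to get right is the construction of $\psi$: one must ensure simultaneously that the curvature condition of Theorem \ref{OT} is verified for some $\delta>0$ (which forces the scaling factor $n/\nu<1$ and thereby uses the strict inequality $\epsilon(L,y)>n$) and that the scheme structure of $V(\jc(\psi))$ along $X_y$ is reduced and log canonical (which forces the scaling factor to be exactly $n/\nu$ rather than smaller). These two conflicting requirements fit together precisely because the Seshadri bound is strictly larger than $n={\rm dim}(Y)$; any other part of the verification is standard bookkeeping with the measure $dV[\psi]$.
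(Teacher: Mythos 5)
Your proposal is correct and follows essentially the same route as the paper: both use $\epsilon(L,y)=\gamma(L,y)>n$ to produce a quasi-psh weight on $Y$ with a logarithmic pole of normalized coefficient $n(1+\delta)$ at $y$ (your normalization $1+\delta=\nu/n$ is the paper's), pull it back by $f$, check the curvature inequality for $\alpha\in[1,1+\delta]$, identify $\jc(\psi)$ with $\ic_{X_y}$ and $dV[\psi]$ with the induced volume form on $X_y$, and apply Theorem \ref{OT}, the constant depending only on data on $Y$. The only cosmetic difference is that the paper arranges $\varphi$ to be smooth outside $y$, whereas you allow extra components of $V(\jc(\psi))$ and extend $s$ by zero there; both devices rest on the same standard construction of the singular metric.
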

\begin{proof}
	Since $L$ is ample over $Y$, one can find a smooth hermitian metric $h_0$ on $L$  with the curvature form ${\rm i}\Theta_{L,h_0}\geq \omega$, where $\omega$ is some K\"ahler form on $Y$.
	
	By the lower bound of Seshadri constant $\epsilon(L,y)>n$,  we can find a global quasi-psh function $\varphi$ with neat singularities on $Y$ such that
	\begin{enumerate}[(a)]
		\item $
		\ir\Theta_{L,h_0}+\hess \varphi\geq 0;
		$
		\item $\varphi$ is smooth outside $y$;
		\item on a neighborhood $W$ of $y$, we have
		$$
		\varphi=(1+\delta)n\log \sum |z-y|^2+w(z)
		$$
		where $\delta>0$ and $w(z)\in \cs^{\infty}(W)$ with $w(y)=0$
	\end{enumerate}
	Now set $\psi:=\frac{1}{1+\delta}\varphi\circ f$, which is a quasi-psh function with neat singularities on $X$. 
	Moreover, since $y$ is the regular value of $f$,  the inverse image $X_y:=f^{-1}\{y\}$ is a finite disjoint union of closed smooth submanifolds of codimension $n$ in $X$, and the multiplier ideal sheaf
	$$\jc(\psi)=\jc(\ic^{\langle n\rangle}_{X_y})=\ic_{X_y}.$$ 
	Here $\ic^{\langle n\rangle}_{X_y}$ is the ideal sheaf consisting of germs of functions that have multiplicity $\geq n$ at a general point of $X_y$:
	$$\ic^{\langle n\rangle}_{X_y}:=\{f\in \oc_X | \ord_x(f)\geq n \mbox{ for a general point }x\in X\}.$$
	Thus $\jc(\psi)$ has log canonical singularities, and we have
	$$
	{\rm i}\Theta_{L_1,h}+{\rm i}\Theta_{f^*L,f^*h_0}+\alpha\hess \psi\geq 0
	$$
	for all $\alpha\in [1,1+\delta]$. Then for any  section $s$ of
	$$
	H^0\big(X_y,(K_X\otimes f^*L\otimes L_1)_{\upharpoonright X_y}\otimes \jc(h_{\upharpoonright X_y})\big),
	$$
	we can apply Theorem \ref{OT}  to extend $s$ to a global section
	$$
	S\in H^0\big(X, K_X\otimes f^*L\otimes L_1\otimes \jc(h)\big)
	$$
	such that
	$$
	\int_{X} \gamma(\delta \psi)|S|^2_{\omega,f^*h_0h_1}e^{-\psi}dV_{X,\omega}\leq\frac{34}{\delta}\int_{X_y} |s|^2_{\omega,f^*h_0h_1}dV_{X_y,\omega}[\psi].
	$$
	Assume that ${\rm dim}(X)=m+n$. From (\ref{measure})	one can then  check that $dV_{X_y,\omega}[\psi]$ is the smooth measure supported on $X_y$, such that 
	$$
	dV_{X_y,\omega}[\psi]=C_0\frac{\omega_{\upharpoonright X_y}^{m}}{m!},
	$$
	where $C_0$ is some contant depending only on $m,n$. Since $\delta$ depends only on $\epsilon(L,y)$, write $C:=\frac{34}{\delta}C_0$ which does not depend on $L_1$. We thus obtain
	\begin{eqnarray}\label{estimate}
	\int_{X} \gamma(\delta \psi)|S|^2_{\omega,f^*h_0h_1}e^{-\psi}dV_{X,\omega}\leq C_0\int_{X_y} |s|^2_{\omega,f^*h_0h_1}\frac{\omega_{\upharpoonright X_y}^{m}}{m!},
	\end{eqnarray}
	where the $L^2$ estimate does not depend on $L_1$.
\end{proof}

\subsection{The Extension Theorem for Twisted Pluricanonical Bundles}\label{pluri}
We recall the following twisted pluricanonical extension theorem, which was inspired by that used by J. Cao to prove the local triviality of Albanese maps of projective manifolds with nef anticanonical bundles \cite{Cao16}. It is a 
consequence of \cite[Section A.2]{BP10}.
\begin{thm}\label{extension2}
	Let $Y$ be a $n$-dimensional projective manifold 
	and let $A_Y$ be any line bundle on $Y$ such that the difference $A_Y-K_Y$ is an ample line bundle. 	Let $f: X \rightarrow Y$ be a surjective morphism
	from a smooth projective manifold $X$ to $Y$ and   $L$ be a pseudo-effective line bundle on $X$ with a possible singular metric $h_L$ such that
	$$\ir \Theta_{h_L} (L) \geq 0 .$$ 
	Assume that there exists some regular value $z$ of $f$, we have
	\begin{enumerate}[(i)]
		\item \label{bpcon1} all the sections of the bundle $mK_{X_z}+L$ extend near $z$,
		\item \label{bpcon2}	$
		H^0 \big(X_z ,  (m K_{X_z} + L_{\upharpoonright X_z})\otimes\jc(h_{L\upharpoonright X_z}^{\frac{1}{m}}) \big)\neq \emptyset.
		$
	\end{enumerate}
	Then for any $y \in Y$  such that
	\begin{enumerate}[(a)]
		\item \label{con1} $y$ is the regular value of $f$,
		\item \label{con2} the Seshadri constant  $\epsilon(A_Y-K_Y,y)>n,$
		\item  \label{con3} all the sections of the bundle $(mK_X+L)_{\upharpoonright X_y}$ extend
		locally near $y$,
	\end{enumerate} 
	the restriction map
	$$H^0 (X, m K_{X/Y} + L  +f^* A_Y) \rightarrow H^0 \big(X_y ,  (m K_{X_y} + L_{\upharpoonright X_y})\otimes \jc(h_{L\upharpoonright X_y}^{\frac{1}{m}})\big)$$
	is surjective. 
	In particular, the choice of $A_Y$ depends only on $Y$ and is independent of $f, L, m$.
\end{thm}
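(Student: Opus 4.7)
The strategy is to reduce the theorem to a single application of the fibration Corollary \ref{fibration extension} at the point $y$. Writing
\[
mK_{X/Y} + L + f^*A_Y \;=\; K_X \;+\; L_1 \;+\; f^*(A_Y - K_Y), \qquad L_1 := (m-1)K_{X/Y} + L,
\]
the ample factor $A_Y-K_Y$ satisfies the Seshadri-constant hypothesis $\epsilon(A_Y-K_Y,y) > n$ by (b), so the corollary delivers the desired extension of $s$ at once, provided that $L_1$ can be endowed with a singular metric $h$ of semi-positive curvature for which $h_{\upharpoonright X_y} \not\equiv +\infty$ and $s \in H^0\bigl(X_y,(mK_{X_y}+L_{\upharpoonright X_y}) \otimes \jc(h_{\upharpoonright X_y})\bigr)$. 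The entire content of the proof is then the construction of this metric $h$.

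To build $h$ I would employ the Siu-P\u{a}un iteration from the proof of the invariance of plurigenera, in the relative form developed in \cite{BP10} and refined in \cite{Cao16}. The inductive step produces, for each $N \geq 1$, a global section $\sigma_N \in H^0\bigl(X, N(mK_{X/Y}+L)+f^*A_Y\bigr)$ that extends $s^{\otimes N}$ over $X_y$, together with a uniform bound on $\int_X |\sigma_N|^{2/N} \dm V$. At stage $N$ the section $\sigma_N$ is obtained by applying Corollary \ref{fibration extension} with the auxiliary singular metric built from $|\sigma_{N-1}|^{2/(N-1)} \cdot h_L^{1/m}$ (after the standard renormalization placing it on the correct line bundle). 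The seed $\sigma_1$ is obtained from $s$ by means of hypothesis (c), which lifts $s$ to a holomorphic extension near $y$ and thereby furnishes the data needed for the first application of the corollary. Hypotheses (i) and (ii) at the auxiliary regular value $z$ enter by guaranteeing that $f_*(mK_{X/Y}+L)$ is non-zero with the correct multiplier-ideal content on a neighborhood of $z$, hence generically on $Y$; this non-degeneracy is what keeps the iteratively built metric from restricting to $+\infty$ on the general fiber and allows the induction to proceed.

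Passing to the limit $N \to \infty$, the positive currents $\tfrac{1}{N}\log|\sigma_N|^2$ converge (along a subsequence, after normalization) to a semi-positive current with analytic singularities; this yields a singular hermitian metric on $mK_{X/Y}+L$, from which $h$ on $L_1$ is extracted by subtracting the $\tfrac{1}{m}$-contribution of $h_L$. The principal obstacle, and the reason a full iteration is needed instead of a one-shot application of the corollary, is the uniform control of the $L^2$ norms in $N$: only with such uniformity does the limit $h$ satisfy the crucial integrability $s \in \jc(h_{\upharpoonright X_y})$, which in turn descends from the hypothesis $s \in \jc(h_L^{1/m}_{\upharpoonright X_y})$. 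The decisive feature making this uniform control possible is precisely the clause in Corollary \ref{fibration extension} that the $L^2$ estimate is independent of the auxiliary pseudo-effective bundle $L_1$ and its singular metric, so that the constants appearing at every stage of the induction are the same.
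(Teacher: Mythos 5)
Your overall architecture coincides with the paper's: write $mK_{X/Y}+L+f^*A_Y = K_X + \widetilde L + f^*(A_Y-K_Y)$ with $\widetilde L=(m-1)K_{X/Y}+L$, equip $\widetilde L$ with a semi-positively curved singular metric whose multiplier ideal along $X_y$ contains $s$, and apply Corollary \ref{fibration extension} once. The gap lies in how you propose to build that metric. The paper does not run a global Siu--P\u{a}un iteration: it invokes \cite[A.2.1]{BP10}, which under hypotheses (i)--(ii) directly produces the $m$-relative Bergman kernel metric $h_{m,B}$ on $mK_{X/Y}+L$ with $\ir\Theta_{h_{m,B}}\geq 0$; the metric on $\widetilde L$ is then $h:=h_{m,B}^{\frac{m-1}{m}}h_L^{\frac{1}{m}}$, and hypotheses (a)--(c) together with the extremal characterization of the Bergman kernel give the $\cs^0$-bound on $|s|^2_{h_{m,B}}$ that yields $\int_{X_y}|s|^2_{\omega,h}\,dV_{X_y,\omega}<+\infty$. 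The semi-positivity of $h_{m,B}$ is established in Berndtsson--P\u{a}un's work by the plurisubharmonic variation of fiberwise Bergman kernels, not by any global extension over $X$.

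The global iteration you describe cannot be run as stated, for two reasons. First, the seed: hypothesis (c) provides only \emph{local} extensions of $s$ near $y$ (over $f^{-1}(V)$ for a small neighborhood $V\ni y$), not a global $\sigma_1\in H^0\big(X,\,mK_{X/Y}+L+f^*A_Y\big)$; and if such a global $\sigma_1$ extending $s$ existed, the theorem would already be proved at $N=1$, so the induction is either unfounded or vacuous. Second, the inductive step: each passage from $\sigma_{N-1}$ to $\sigma_N$ is an Ohsawa--Takegoshi extension across the codimension-$n$ submanifold $X_y$, which by Corollary \ref{fibration extension} requires a fresh factor $f^*(\text{ample with Seshadri constant}>n)$ at every stage; but the bundle $N(mK_{X/Y}+L)+f^*A_Y$ contains only a single copy of $f^*A_Y$, and $K_{X/Y}=K_X-f^*K_Y$ contributes no positivity in the base direction. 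This is precisely the structural difference from the pluricanonical iteration in the proof of Theorem \ref{mainot} for $k\geq 2$, where the target $k(K_X+\lceil n\nu\rceil f^*L)$ carries $k$ copies of the ample twist, one for each extension step. If you replace the iteration by the citation of the relative Bergman kernel metric and then verify the $\cs^0$-boundedness of $|s|^2_{h_{m,B}}$ from (a)--(c), your argument closes.
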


\begin{proof}
	Thanks to \cite[A.2.1]{BP10}, the conditions \eqref{bpcon1} and \eqref{bpcon2} imply that	there exists a $m$-relative Bergman type metric $h_{m,B}$ on $m K_{X/Y} + L$ with respect to $h_L$ such that $i\Theta_{h_{m, B}} (m K_{X/Y}+L) \geq 0$. 
	Thus $h:= h_{m,B}^{\frac{m-1}{m}}\cdot h_L^{\frac{1}{m}}$ defines a possible singular metric on
	$$\widetilde{L} : = \frac{m-1}{m} (m K_{X/Y} + L) + \frac{1}{m} L=(m-1)K_{X/Y}+L ,$$
	with $i\Theta_h (\widetilde{L} ) \geq 0$.  
	
	Take any $s \in H^0 \big(X_y ,  (m K_{X_y} + L_{\upharpoonright X_y})\otimes \jc(h_{L\upharpoonright X_y}^{\frac{1}{m}})\big)$. It follows from Condition \eqref{con1}, \eqref{con2} and \eqref{con3}, and the construction of the $m$-relative Bergman kernel metric that 
	$|s|^2_{h_{m,B}}$ is $\cs^0$-bounded. 
	Then we see that 
	\begin{eqnarray*}\label{normcon}
		\int_{X_y} |s|_{\omega,h} ^2 dV_{X_y,\omega}&=&	\int_{X_y} |s|_{h_{m,B}} ^{\frac{2(m-1)}{m}}|s|^{\frac{2}{m}}_{\omega,h_L^{\frac{1}{m}}} dV_{X_y,\omega}\\
		&\leq& C \int_{X_y} |s|^{\frac{2}{m}}_{\omega,h_L^{\frac{1}{m}}} dV_{X_y,\omega}< +\infty. 
	\end{eqnarray*}
	We then can apply Corollary \ref{fibration extension} to $K_X + \widetilde{L} +f^* (A_Y - K_Y)$, 
	to extend $s$ to a section in $H^0 (X, K_{X/Y} + \widetilde{L} +f^* A_Y)$.
	In conclusion, the restriction
	$$H^0 (X, m K_{X/Y} + L +f^* A_Y) \rightarrow  H^0 \big(X_y ,  (m K_{X_y} + L_{\upharpoonright X_y})\otimes \jc(h_{L\upharpoonright X_y}^{\frac{1}{m}})\big)$$
	is  surjective and the theorem is proved. 
\end{proof}
One can see that there exists a non-empty Zariski open set of $Y^\circ\subset Y$ such that for any $y\in Y^\circ$, it satisfies  Condition \eqref{con1}, \eqref{con2} and \eqref{con3} in Theorem \ref{extension2}.

\section{\sc On the Conjecture of Popa and Schnell}
Let $f:X\rightarrow Y$ be the surjective morphism between smooth projective manifolds, and let $L$ be an ample line bundle on $Y$ with a smooth hermitian metric $h_0$ such that the curvature form $\ir \Theta_{h_0}\geq \omega$ for some K\"ahler metric $\omega$ on $Y$. Assume that ${\rm dim}(Y)=n$ and ${\rm dim}(X)=m+n$. Fix any point $y$ on $Y$ which is the regular value of $f$. Take any positive real number $\nu$ such that  $$\epsilon(L,y)>\frac{1}{\nu}.$$
Then we have
$$
\epsilon(\lceil n\nu\rceil L,y)>n.
$$
Set $\tilde{L}:=\lceil n\nu\rceil f^*L$ with the smooth hermitian metric $\tilde{h}:=f^*h_0^{\lceil n\nu\rceil}$, then we can restate Corollary \ref{fibration extension} in the following variant form:
\begin{proposition}\label{extension}
	There is a globally defined quasi-psh function $\psi_0$ defined over $X$ and a positive number $\delta$ such that,  for any pseudo-effective line bundle $L_1$ equipped with the  possible singular hermitian metric $h_1$, whose curvature current $\ir\Theta_{L_1,h_1}\geq 0$ and $h_1$ is not identically zero when restricted on $X_y$,  for any section 
	$$
	s\in H^0\big(X_y, (K_X\otimes \tilde{L}\otimes L_1)_{\upharpoonright X_y}\otimes \jc(h_{1 \upharpoonright X_y})\big),
	$$
	there is a global section
	$$
	S\in H^0(X, K_X\otimes \tilde{L}\otimes L_1)
	$$
	whose restriction to $X_y$ is $s$, such that
	$$
	\int_{X} \gamma(\delta \psi_0)|S|^2_{\omega,\tilde{h}h_1}e^{-\psi_0}dV_{X,\omega}\leq C \int_{X_y} |s|^2_{\omega,\tilde{h}h_1}dV_{X_y,\omega}.
	$$
	Here $dV_{X_y,\omega}:=\frac{\omega_{\upharpoonright X_y}^{m}}{m!}$, and $C$ is some constant which does not depend on $L_1$.
\end{proposition}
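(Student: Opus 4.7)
The plan is to view Proposition \ref{extension} as a uniform, bookkeeping-refined restatement of Corollary \ref{fibration extension}: the genuine content is not a new theorem but the observation that the function $\psi_0$, the positive constant $\delta$, and the $L^2$ constant $C$ produced in the corollary's proof depend only on the triple $(f,L,y)$ and the parameter $\nu$, not on the auxiliary pair $(L_1,h_1)$. So the strategy is to reread the corollary's proof, extract these objects, and verify that independence.

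First I would promote the Seshadri bound. Since $L$ is ample, Theorem \ref{Demailly constant} gives $\gamma(L,y)=\epsilon(L,y)>1/\nu$, hence $\gamma(\lceil n\nu\rceil L,y)>n$. This produces $\delta>0$ and a quasi-psh function $\varphi$ on $Y$ with neat analytic singularities, smooth off $y$, having a logarithmic pole of coefficient $(1+\delta)n$ at $y$, and satisfying $\ir\Theta_{L,h_0^{\lceil n\nu\rceil}}+\hess\varphi\geq 0$. I would then set $\psi_0:=\frac{1}{1+\delta}\varphi\circ f$, a globally defined quasi-psh function on $X$ with neat singularities. Because $y$ is a regular value, $X_y$ is smooth of codimension $n$ and $\jc(\psi_0)=\ic_{X_y}$, so $\psi_0$ has log canonical singularities along the reduced variety $X_y$; crucially, both $\psi_0$ and $\delta$ are manufactured purely from $(f,L,y,\nu)$.

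Next I would apply the Ohsawa-Takegoshi-Demailly theorem (Theorem \ref{OT}) to the line bundle $K_X\otimes \tilde L\otimes L_1$ equipped with metric $\tilde h\cdot h_1$ and weight $\psi_0$. The curvature inequality $\ir\Theta_{\tilde L\otimes L_1,\tilde h h_1}+\alpha\hess\psi_0\geq 0$ for $\alpha\in[1,1+\delta]$ is exactly the one already checked in the proof of Corollary \ref{fibration extension}, and the pseudo-effectivity of $(L_1,h_1)$ only helps; the hypothesis $s\in\jc(h_{1\upharpoonright X_y})$ together with the non-triviality of $h_{1\upharpoonright X_y}$ ensures finiteness of the boundary integral. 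Theorem \ref{OT} then yields a global extension $S\in H^0(X,K_X\otimes\tilde L\otimes L_1)$ with the corresponding $L^2$ bound.

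The last step is to evaluate the measure $dV_{X_y,\omega}[\psi_0]$ from (\ref{measure}) using the explicit normal form of $\psi_0$ along $X_y$; this is a residue-type Lebesgue computation in a transverse slice, giving $dV_{X_y,\omega}[\psi_0]=C_0\,\omega^m_{\upharpoonright X_y}/m!$ with $C_0=C_0(m,n,\delta)$. Setting $C:=34C_0/\delta$ yields the desired inequality. The one subtle item to verify, and really the whole content of the proposition beyond Corollary \ref{fibration extension}, is that $L_1$ enters the entire argument only through the non-negative curvature term $\ir\Theta_{L_1,h_1}\geq 0$ and through the pointwise norms of $s$ and $S$, so the triple $(\psi_0,\delta,C)$ is genuinely independent of $(L_1,h_1)$.
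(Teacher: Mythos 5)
Your proposal is correct and follows essentially the same route as the paper: the paper itself presents Proposition \ref{extension} as a restatement of Corollary \ref{fibration extension}, whose proof constructs exactly your $\psi_0=\frac{1}{1+\delta}\varphi\circ f$ from a quasi-psh weight with a logarithmic pole of coefficient $(1+\delta)n$ at $y$, verifies $\jc(\psi_0)=\ic_{X_y}$, applies Theorem \ref{OT}, and evaluates $dV_{X_y,\omega}[\psi_0]$ to get a constant $C=\frac{34}{\delta}C_0$ independent of $(L_1,h_1)$. Your explicit appeal to Theorem \ref{Demailly constant} to produce $\varphi$ merely makes precise a step the paper leaves implicit.
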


Thus from Proposition \ref{extension}, if we set $L_1$ to be the trivial bundle on $X$, we see that   the following morphism 
$$
H^0(X, K_X\otimes f^*L^{\otimes \lceil n\nu\rceil })\rightarrow H^0\big(X_y,(K_X\otimes f^*L^{\otimes \lceil n\nu\rceil})_{\upharpoonright X_y}\big)
$$ 
is always surjective. As one can take $\nu$ to be arbitrary close to $\frac{1}{\epsilon(L,y)}$ so that $\lceil n\nu\rceil=\floor*{\frac{n}{\epsilon(L,y)}}+1$, we see that the direct image $f_*K_X\otimes L^{\otimes \floor*{\frac{n}{\epsilon(L,y)}}+1}$ is generated by global sections at $y$. Since $y$ is an arbitrary regular value of $f$, we thus prove Theorem \ref{mainot} for $k=1$. In order to prove the theorem for any $k\geq 2$, we need to apply the techniques in proving Siu's invariance of plurigenera \cite{Siu97} by P\u{a}un \cite{Pau07}.
\medskip

\begin{proof}[Proof of Theorem \ref{mainot}]
	Fix any $k\geq 2$ and any $\sigma\in H^0\big(X_y,k(K_X+\tilde{L})_{\upharpoonright X_y}\big)$.	We want to find a global section $\Sigma\in H^0\big(X,k(K_X+\tilde{L})\big)$ whose restriction to $X_y$ is $\sigma$.
	
	\medskip
	
	Choose a very ample line bundle $A$ on $X$ such that for every $r=0,\ldots,k-1$, the line bundle $F_{0,r}:=r(K_X+\tilde{L})+A$ is globally generated by  sections
	$$\{u_j^{(0,r)}\}_{j=1,\ldots,N_r}\subset H^0(X,F_{0,r}).$$
	We then define inductively a sequence of line bundles $$F_{q,r}:=(qk+r)(K_X+\tilde{L})+A$$
	for any $q\geq 0$, and $0\leq r\leq k-1$. By constructions we have
	\begin{equation}\label{ad hoc}
	\left\{
	\begin{aligned}
	&F_{q,r+1}=K_X+F_{q,r}+\tilde{L} \quad &\text{if} \quad r<k-1,\\
	&F_{q+1,0}=K_X+F_{q,k-1}+\tilde{L} \quad &\text{if} \quad r=k-1.
	\end{aligned}
	\right.
	\end{equation}
	
	We are going to construct inductively families of sections, say $\{u_j^{(q,r)}\}_{j=1,\ldots,N_r}$, of $F_{q,r}$ over $X$, together with ad hoc $L^2$ estimates, such that each $u_j^{(q,r)}$ is an extension of $v_j^{(q,r)}$, where we set $$v_j^{(q,r)}:=\sigma^qu^{(0,r)}_{j \upharpoonright X_y}\in H^0(X,F_{q,r}).$$
	\medskip
	
	Now, by induction, assume that such $\{u_j^{(q,r)}\}_{j=1,\ldots,N_r}$ above can be constructed. Then $F_{p,r}$ can be equipped with a natural singular hermitian metric $h_{q,r}$ defined by
	$$
	|\xi|_{h_{q,r}}^2:=\frac{|\xi|^2}{\sum_{j=1}^{N_r}|u_j^{(q,r)}|^2},
	$$
	such that $\ir \Theta_{h_{q,r}}\geq 0$.  Let $h_{K_X}$ be the smooth hermitian metric of the canonical bundle $K_X$ induced by the volume form $dV_{X,\omega}$, and set $\hat{h}:=h_{K_X}\tilde{h}$ to be the smooth metric on $K_X+\tilde{L}$, then by construction the pointwise norm with respect to the metric $h_{q,r}$ is
	\begin{equation}\label{norm}
	\left\{
	\begin{aligned}
	&|v_j^{(q,r+1)}|^2_{\omega,h_{q,r}\tilde{h}}=\frac{|v_j^{(0,r+1)}|_{\hat{h}^{r+1}h_A}^2}{\sum_{i=1}^{N_r}|v_i^{(0,r)}|_{\hat{h}^rh_A}^2} \quad &\text{if} \quad r<k-1,\\
	&|v_j^{(q+1,0)}|^2_{\omega,h_{q,r}\tilde{h}}=\frac{|\sigma|_{\hat{h}^k}|v_j^{(0,0)}|_{h_A}^2}{\sum_{j=1}^{N_r}|v_i^{(0,r)}|_{\hat{h}^{k-1}h_A}^2} \quad &\text{if} \quad r=k-1.
	\end{aligned}
	\right.
	\end{equation}
	where $h_A$ is a smooth hermitian metric on $A$ with strictly positive curvature. Since the sections $\{v_i^{(0,r)}\}_{i=1,N_r}$  generates $F_{0,r  \upharpoonright X_y}$, there is a constant $C_1>0$ such that \eqref{norm} is uniformly $\cs^0$ bounded above by $C_1$. From \eqref{ad hoc}, it then follows from Proposition \ref{extension} that one can extend $v_j^{(q,r+1)}$ (or $v_j^{(q+1,0)}$ if $r=k-1$) into a section $u_j^{(q,r+1)}$ ($u_j^{(q+1,0)}$ respectively) over $X$  such that
	\begin{equation}\label{bound}
	\left\{
	\begin{aligned}
	&\int_X\gamma(\delta\psi_0)e^{-\psi_0}\sum_{j=1}^{N_{r+1}}|u_j^{(q,r+1)}|^2_{\omega,h_{q,r}\tilde{h}}dV_{X,\omega}\leq C_2 \quad &\text{if} \quad r<k-1,\\
	&\int_X\gamma(\delta\psi_0)e^{-\psi_0}\sum_{j=1}^{N_0}|u_j^{(q+1,0)}|^2_{\omega,h_{q,r}\tilde{h}}dV_{X,\omega}\leq C_2 \quad \quad &\text{if} \quad r=k-1.
	\end{aligned}
	\right.
	\end{equation}
	for some uniform constant $C_2$.  From \eqref{norm}, \eqref{bound} is equivalent to
	\begin{equation}\label{bound2}
	\left\{
	\begin{aligned}
	&\int_X\gamma(\delta\psi_0)e^{-\psi_0}\frac{\sum_{i=1}^{N_{r+1}}|u_i^{(q,r+1)}|^2_{\hat{h}^{qk+r+1}h_A}}{\sum_{i=1}^{N_r}|u_i^{(q,r)}|_{\hat{h}^{qk+r}h_A}^2}dV_{X,\omega}\leq C_2 \quad &\text{if} \quad r<k-1,\\
	&\int_X\gamma(\delta\psi_0)e^{-\psi_0}\frac{\sum_{i=1}^{N_0}|u_i^{(q,r+1)}|^2_{\hat{h}^{qk+k}h_A}}{\sum_{i=1}^{N_r}|u_i^{(q,r)}|_{\hat{h}^{qk+k-1}h_A}^2}dV_{X,\omega}\leq C_2 &\text{if} \quad r=k-1.
	\end{aligned}
	\right.
	\end{equation}
	Let us denote by 
	$$
	a_{qk+r}(x):=\sum_{i=1}^{N_r}|u_i^{(q,r)}|_{\hat{h}^{qk+r}h_A},
	$$
	which is a quasi-psh and bounded non-negative smooth function on $X$. By the integrability of $\log \gamma(\delta\psi_0)$ and $\psi_0$ with respect to the standard Lebesgue measure over $X$, combined with the concavity property of the logarithmic function as well as the Jensen inequality, we can find some constant $C_3$ and $C_4$ such that
	\begin{eqnarray}
	\int_X \log \frac{a_l}{a_{l-1}} dV_{X,\omega}\leq C_3-\int_X \log \gamma(\delta\psi_0)dV_{X,\omega}+\int_X \psi_0dV_{X,\omega}\leq C_4
	\end{eqnarray}
	for any $l\geq 1$. Since $a_1(x)$ is a bounded smooth function on $X$, we can also find a constant $C_5\geq C_4$ such that
	$$
	\int_X \log a_1dV_{X,\omega}\leq C_5.
	$$
	Combined these inequalities together we obtain
	$$
	\int_X \frac{\log a_l}{l}dV_{X,\omega}\leq C_5
	$$
	for any $l\geq 1$. Set $f_q:=\frac{\log a_{qk}}{q}$, and we have the following properties:
	\begin{enumerate}[(a)]
		\item for any $q\geq 1$, we have\label{a}
		$$	\int_X f_qdV_{X,\omega}\leq C_5	;$$
		\item  the inequality\label{b}
		$$k\Theta_{\hat{h}}(K_X+\tilde{L})+\hess f_q\geq -\frac{1}{q}\Theta_{h_A}(A)$$
		holds true in the sense of currents on $X$;
		\item on $X_y$ the following equality is satisfied\label{c}
		$$
		f_{q \upharpoonright X_y}=\log |\sigma|^2_{\hat{h}^k}+a_0(x)_{\upharpoonright X_y}
		$$
		where $a_0(x)=\log \sum_{i=1}^{N_0}|u_i^{(0,0)}|_{h_A}$ is a smooth function on $X$.
	\end{enumerate}
	
	By the mean value inequality
	for the psh functions, as a consequence of the properties (\ref{a}) and (\ref{b}), one can show the existence of a uniform upper bound for the functions $f_q$ over $X$. Thus the sequence $f_q(z)$ must have some subsequence which converges in $L^1$ topology on $X$ to the potential $f_{\infty}$, in the form of the regularized limit
	$$ 
	f_\infty(z):=\limsup_{\zeta\rightarrow z}\lim\limits_{q_\nu\rightarrow +\infty}f_{q_\nu}(\zeta),
	$$
	which satisfies
	$$k\Theta_{\hat{h}}(K_X+\tilde{L})+\hess f_{\infty}\geq 0$$
	as a current on $X$. Moreover, by Property (\ref{c}) $f_{\infty}$ is not identically $-\infty$ on $X_y$, as well as
	\begin{eqnarray}\label{uniform}
	f_{\infty}\geq \log  |\sigma|^2_{\hat{h}^k}+\oc(1)
	\end{eqnarray}
	pointwise on $X_y$. 
	
	Now we construct a singular hermitian metric $h_{\infty}$ on $(k-1)(K_X+L)$ defined by
	$$
	h_{\infty}:=\hat{h}^{k-1}e^{-\frac{k-1}{k}f_{\infty}}.
	$$
	Then $\Theta_{h_{\infty}}\big((k-1)(K_X+\tilde{L})\big)\geq 0$. Write $k(K_X+L)=K_X+(k-1)(K_X+\tilde{L})+\tilde{L}$, where $(k-1)(K_X+\tilde{L})$ is equipped with the singular hermitian metric $h_{\infty}$. Since
	$$
	|\sigma|^2_{\omega,\tilde{h} h_\infty }=|\sigma|^2_{\hat{h} h_\infty }=|\sigma|^{\frac{2(k-1)}{k}}_{h_\infty }\cdot |\sigma|^{\frac{2}{k}}_{\hat{h}}
	$$
	which is $\mathscr{C}^0$ bounded, we then can apply Proposition \ref{extension} to extend $\sigma$ to a global section $\Sigma\in H^0(X,k(K_X+\tilde{L}))$.

	In conclusion, for any regular value $y$ of the morphism $f$, the following morphism 
	$$
	H^0(X, K_X^{\otimes k}\otimes f^*L^{\otimes l})\rightarrow H^0\big(X_y,(K_X^{\otimes k}\otimes f^*L^{\otimes l})_{\upharpoonright X_y}\big)
	$$ 
	is always surjective for any $l>\frac{n}{\epsilon(L,y)}$. Thus Theorem \ref{mainot} is proved.
\end{proof}

\medskip

In order to improve the above quadratic bound to linear, we need to apply the twisted pluricanonical extension theorem in Section \ref{pluri} instead. First, we recall the  following result arising from birational geometry:
\begin{thm}\label{Demailly}
	Let $L$ be an ample line bundle over a projective $n$-fold $Y$, then the adjoint line bundle $K_Y+(n+1)L$ is semi-ample.
\end{thm}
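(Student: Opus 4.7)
The plan is to prove this in two steps: first, establish that $K_Y+(n+1)L$ is nef, and second, upgrade nefness to semi-ampleness via a standard base-point-free argument.

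For the nefness step, I would invoke Mori's cone theorem together with the length-of-extremal-ray bound. The cone theorem decomposes $\overline{NE}(Y)$ into a $K_Y$-nonnegative part plus countably many $K_Y$-negative extremal rays $R_i$, each spanned by a rational curve $C_i$ satisfying the length bound $0<-K_Y\cdot C_i\leq n+1$. On the $K_Y$-nonnegative part, $K_Y+(n+1)L$ is clearly positive because $L$ is ample. On any $K_Y$-negative extremal ray, since $L$ is ample and $C_i$ is an integral curve we have $L\cdot C_i\geq 1$, so
$$\big(K_Y+(n+1)L\big)\cdot C_i \;=\; K_Y\cdot C_i+(n+1)\,L\cdot C_i \;\geq\; -(n+1)+(n+1)\;=\;0.$$
This gives nefness.

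For the semi-ampleness step, I would apply the Kawamata-Shokurov base-point-free theorem to $D:=K_Y+(n+1)L$. The required hypotheses are that $D$ is nef (shown above) and that $aD-K_Y$ is nef and big for some $a>0$. Taking $a=1$ yields $D-K_Y=(n+1)L$, which is ample, hence nef and big. The base-point-free theorem then produces an integer $N\geq 1$ such that $|ND|$ is free, i.e. $D$ is semi-ample.

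The only non-trivial input is the base-point-free theorem itself, which is the core ingredient from birational geometry; both the cone-theorem length bound and the base-point-free theorem are classical and can be cited from Kawamata-Matsuda-Matsuki or Kollár-Mori. There is no genuine technical obstacle beyond verifying that the hypotheses of these two results are met in this rather clean setup, since $L$ is assumed ample and $Y$ smooth (hence klt).
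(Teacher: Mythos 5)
Your proof is correct and follows exactly the route the paper sketches: nefness via the cone theorem with the length bound $-K_Y\cdot C_i\le n+1$ on extremal rational curves, then semi-ampleness from the Kawamata--Shokurov base-point-free theorem applied with $aD-K_Y=(n+1)L$ ample. No gaps; this matches the paper's argument.
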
 
Based on the Mori theory, one observes that $n+1$ is the maximal length of extremal rays of smooth projective
$n$-folds, which shows that $K_Y+(n+1)L$ is nef. By the base-point-free theorem, one can even show that $K_Y+(n+1)L$ is semiample. In his work on the Fujita conjecture \cite{Dem96}, Demailly also gave an analytic proof for the fact that $K_Y+(n+1)L$ is nef.

\begin{proof}(Proof of Theorem \ref{sharp bound})
	Take a  a log resolution $\mu : X'\rightarrow  X$  of $(X,\Delta)$ such that
	$$
	K_{X'}=\mu^*(K_X+\Delta)+\sum_i{a_i}{E_i}-\sum_j{b_j}{F_j},
	$$
	where $a_i,b_j\in \mathbb{Q}_+$, and $\sum_{i,j} E_i+F_j$ is a divisor with simple normal crossing support.  By the assumption that $(X,\Delta)$ is klt and $\Delta$ is effective,  each ${E_i}$ is an exceptional divisor and  
	$0<b_j<1$ for each $b_j$. Denote $f':=f\circ \mu$. Then $f':X'\rightarrow Y$ is a surjective morphism between smooth projective manifolds.
	
	 It follows from Theorem \ref{Demailly} that $K_Y+(n+1)L$ can be equipped with a smooth hermitian metric $h_1$ with semi-positive curvature. If we further assume that $K_Y$ is pseudo-effective, $(m-1)K_Y+L$ is big for any $m\geq 1$ and thus can be equipped with a singular hermitian metric $h$ with mild singularities such that $\Theta_h\geq \epsilon\omega$ for some hermitian form $\omega$ over $Y$. Observe that if $m(K_X+\Delta)$ is a (integral) Cartier divisor, then $ma_i,mb_j\in \zbb^+$ for each $a_i,b_j$. Let us equip the divisor $\sum_j{mb_j}{F_j}$ with the canonical singular hermitian metric $h_2$ so that $\sqrt{-1}\Theta_{h_2}=\sum_j{mb_j}[{F_j}]$. For any point $y\in Y$, denote by $X'_y$ the fiber of $f':X'\rightarrow Y$. Let us denote  $P:=(m-1)f^* \big(K_Y+(n+1)L\big)+\sum_j{mb_j}{F_j}$ equipped with the (semi-positive curved) singular hermitian metric $h_P:=f^*h_1^{m-1}\cdot h_2$ in the general setting; and when $K_Y$ is pseudo-effective, denote $P:=f'^*\big((m-1)K_Y+L\big)+\sum_j{mb_j}{F_j}$ with the (semi-positive curved) singular hermitian metric $h_P:=f^*h\cdot h_2$. Recall that $\sum_j F_j$ is simple normal crossing and $0<b_j<1$. Then for general $y\in Y$, $\jc(h_{P\upharpoonright X'_y}^{\frac{1}{m}})=\oc_{X'_{y}}$.  By Theorem \ref{EKL}, when $k\geq n^2+1$, the Seshadri constant 
	 $$
	 \epsilon\big(kL, y\big)> n
	 $$
	 for a general $y\in Y$. So we can apply Theorem \ref{extension2} to show that,  the restriction map
\begin{eqnarray*}
H^0 \big(X', m K_{X'/Y} +P+f'^* (K_Y+lL) \big) \rightarrow H^0 \big(X'_y ,  (m K_{X'}+\sum_j{mb_j}{F_j})_{\upharpoonright X_y}\big)
\end{eqnarray*}
	is surjective for a generic $y$ in $Y$. In other word, 
\[
	H^0 \big(X', \mu^*(mK_X+m\Delta+lf^*L)+\sum_i{ma_i}{E_i}\big) \rightarrow	H^0\Big(X'_y ,  m\big(\mu^*(K_X+\Delta)+\sum_i{a_i}{E_i}\big)_{\upharpoonright X'_y}\Big)
		\]
		is surjective for a generic $y$ in $Y$ for
		any $l\geq m(n+1)+n^2-n$ in the general cases, and for $l\geq n^2+2$ when $K_Y$ is pseudo-effective.
		 
		Since each ${E_i}$ is an exceptional divisor of the birational morphism $\mu : X'\rightarrow  X$,   the natural inclusion
		$$
		H^0 \big(X', \mu^*(mK_X+m\Delta+lf^*L)\big)\rightarrow	H^0 \big(X', \mu^*(mK_X+m\Delta+lf^*L)+\sum_i{ma_i}{E_i}\big)
		$$
		is thus an isomorphism. Thus one also has the surjectivity
		\[
		H^0 \big(X', \mu^*(mK_X+m\Delta+lf^*L)\big) \rightarrow	H^0\Big(X'_y ,  m\big(\mu^*(K_X+\Delta)\big)_{\upharpoonright X'_y}\Big).
		\]
		In other words, the direct image
	$$f'_*\big(\mu^*(mK_X+m\Delta+lf^*L)\big)=f_*(mK_X+m\Delta)\otimes L^l$$
	is generated  by  global  sections  at  the generic points of $Y$ when $l\geq m(n+1)+n^2-n$,  and for $l\geq n^2+2$ when $K_Y$ is pseudo-effective.
	This completes the proof of Theorem \ref{sharp bound}.
\end{proof}

\begin{rem}
In a recent very interesting preprint \cite{LPS17}, Lombardi, Popa and Schnell proved that, when $Y$ is an Abelian variety, the sheaves $ 
f_*(mK_X)$ become globally generated after pullback by an isogeny. Their result is surprising since the extension does not require any local positivity. Since they use deep tools like GV sheaves, it is tempting to ask whether one can use analytic methods to give another proof of their results.
\end{rem}

\section{\sc On a Question of Demailly-Peternell-Schneider}\label{question dps}
In this section, we prove Theorem \ref{DPS01} and thus give an affirmative answer to  Problem \ref{DPS} in the case that both $X$ and $Y$ are smooth manifolds.

\begin{proof}[Proof of Theorem \ref{DPS01}]
Take a sufficiently divisible $q\in \mathbb{N}$ such that both $q\Delta$ and $qD$ are Cartier divisors, and a sufficient ample line bundle $A_X$ on $X$ such that $A_X+qD$ is ample a smooth hermitian metric $h$ on $A_X+qD$ such that $\ir\Theta_h\geq 3\omega$ for some K\"ahler metric $\omega$, and the direct image $f_*(A_X)$ is a torsion free coherent sheaf which is not only locally free but also globally generated over the Zariski open set $X^{\circ}:=X\setminus f^{-1}(Z)$. Then $f_*(A_X)$ is   locally free outside a  subvariety $W\subset Z$ of codimension at least 2. Set $r$ to be the generic rank of $f_*(A_X)$, and denote by
	$$\det f_*(A_X):=\Lambda^r  f_*(A_X)^{\star\star}$$
	to be the bidual of  $\Lambda^r  f_*(A_X)$ which is an invertible sheaf over $Y$, then there is coherent ideal sheaf $\ic$ supported on $W$ such that 
	$$
	\Lambda^r  f_*(A_X)=\det f_*(A_X)\otimes \ic.
	$$
Let us also choose a very ample line bundle $A_Y$   on $Y$ such that $A_Y-K_Y$ generates $n+1$ jets everywhere and $A_Y+\det f_*(A_X)$ is also an ample line bundle on $Y$. In particular, the Seshadri constant $\epsilon(A_Y-K_Y,y)>n$ for any $y$.
	\medskip 
	
It follows from   Definition \ref{restrict} that, for any $\mathbb{R}$-divisor $E$,	the restricted base locus of $E$ is defined by
	$$\mathbf{B}_-(E)=\bigcup_{m>0}\mathbf{B}(E+\frac{1}{m}A)$$
	where $A$ is an ample divisor, and the definition being independent of $A$. 
	Equivalently, in \cite{BDPP13}, it is shown that
	$$\mathbf{B}_-(E)=\bigcup_{m\in \mathbb{N}}\bigcap_{T}E_+(T),$$ 
	where $T$  runs over the set $c_1(E)[-\frac{1}{m}\omega]$ of all
	closed real $(1,1)$-currents $T\in c_1(E)$  such that $T\geq -\frac{1}{m}\omega$, and $E_+(T)$ denotes the locus where the Lelong numbers of $T$  are strictly
	positive. By \cite{Bou02}, there is always a current $T_{{\rm min},m}$
	which achieves minimum singularities and minimum Lelong numbers among all members of
	$c_1(E)[-\frac{1}{m}\omega]$, hence 
	$$
	\mathbf{B}_-(E)=\bigcup_{m\in \mathbb{N}}E_+(T_{{\rm min},m}).
	$$
	By Demailly's regularization theorem in \cite{Dem92b}, for every $m\in \mathbb{N}$, we can find a closed (1,1)-current $T_m\in c_1(E)$ with neat singularities such that
	$T_m\geq -\frac{2}{m}\omega$, and 
	$$ E_+(T_{{\rm min},2m})\subset E_+(T_{m})\subset E_+(T_{{\rm min},m}).$$ 
	Therefore, when $E$ is a Cartier divsor, there exists a singular hermitian metric $\tilde{h}_m$ on $E$ with neat singularities, such that the curvature current
	$$
	\ir \Theta_{\tilde{h}_m}= T_m\geq -\frac{2}{m}\omega.
	$$
	Set $E:=-q(K_X+D)-f^*(q\Delta)$. Since  $\mathbf{B}_-\big(-q(K_X+D)-f^*(q\Delta)\big)=\mathbf{B}_-\big(-(K_X+D)-f^*\Delta\big)$ does not project onto $Y$, thus for any $m\in \mathbb{N}$, $Z_m:=f\big(E_+(T_{m})\big)$ is a proper subvariety of $Y$, and the singular hermitian metric $\tilde{h}_m^{\otimes m}h$ on $-mq(K_X+D)-mqf^*\Delta+A_X+qD$ is smooth on $X\setminus f^{-1}(Z_m)$.
	
	\medskip
	
	For the $\mathbb{Q}$-effective divisor $D=\sum_{i=1}^{t}a_iD_i$, there is a canonical singular hermitian metric $h_D$ defined on $qD$, with the local weight $$\varphi_D=\sum_{i=1}^{t} qa_i\log |g_i|,$$
	where $g_i\in \Gamma(U,\oc_U)$ is a holomorphic function locally defining $D_i$ on some open set $U\subset X$. Therefore, the curvature current
	$$
	\ir \Theta_{h_D}=[qD]\geq 0,
	$$
	and thus $h_D$ is a singular hermitian metric with neat singularities. 
	
	Recall that $Z_D$ is denoted to be the  minimal  set containing $Z$, such that for every $y\notin Z_D$, the pair $(X_y, D_{\upharpoonright X_y})$ is also lc.
	Here we denote by $X_y:=f^{-1}(y)$. Since   $(X,D)$ is lc, thus $Z_D$ is  an at most countable union of proper subvarieties of $Y$. Indeed, the set
	$$
	Y_m:=\{y\notin Z | (X_y, (1-\frac{1}{m})D_{\upharpoonright X_y}) \mbox{ is klt}  \}
	$$
	is an Zariski open set of $Y$. Therefore, one has
	$$
	Z_D=\bigcup_{m=1}^{\infty}Y\setminus Y_m.
	$$
	Thus for the singular hermitian metric $h_m:=\tilde{h}_m^{\otimes m}hh_D^{\otimes m-1}$ on $-qmK_X+A_X-mqf^*\Delta$,  the multiplier ideal sheaf
\begin{eqnarray}\label{trivial multiplier}
\jc(h_{m\upharpoonright X_y}^{\frac{1}{qm}})=\jc((1-\frac{1}{m})D_{\upharpoonright X_y})=\oc_{X_y}
\end{eqnarray}
	for any $y\in Y_m\setminus Z_m$. Moreover, the curvature current $\ir\Theta_{h_m}\geq \omega$. 
	\medskip
	
	 Denote $L:=-mqK_X+A_X-mf^*(q\Delta)$. For any $y\in Y_m\setminus Z_m$, all the sections of the bundle $(mqK_X+L)_{\upharpoonright X_y}=\big(A_X-mf^*(q\Delta)\big)_{\upharpoonright X_y}$ extend
	locally near $y$,  and thus  it satisfies  Condition \eqref{con1}, \eqref{con2} and \eqref{con3} in Theorem \ref{extension2}. It then follows from \eqref{trivial multiplier} and Theorem \ref{extension2} that the restriction
	$$H^0 (X, mq K_{X/Y}  -mqK_X+A_X -mqf^*\Delta+f^* A_Y) \rightarrow H^0 (X_y ,  A_{X\upharpoonright X_y})$$
	 is surjective for any $y\in Y_m\setminus Z_m$.
	In other words, the direct image sheaf
	\begin{eqnarray}\label{direct image}
	f_*\big(mq K_{X/Y}  -mqK_X+A_X -mqf^*\Delta+f^* A_Y\big)=	(-K_Y-\Delta)^{\otimes mq}\otimes A_Y\otimes f_*(A_X)
	\end{eqnarray}
	is generated by global sections over $Y_m\setminus Z_m$, and by the assumption that $f_*(A_X)$ is locally free over $Y\setminus Z$, we conclude that the top exterior power 
	$$\Lambda^r \big((-K_Y-\Delta)^{\otimes mq}\otimes A_Y\otimes f_*(A_X)\big)=(-K_Y-\Delta)^{\otimes rmq}\otimes A_Y^{\otimes r}\otimes \det f_*(A_X)\otimes \ic$$
	is also generated by global sections over $Y_m\setminus Z_m$. In particular, for every $m\in \mathbb{N}$, the base locus 
	\begin{eqnarray}\label{base}
	{\rm Bs}\big((-K_Y-\Delta)^{\otimes rmq}\otimes A_Y^{\otimes r}\otimes \det f_*(A_X)\big)\subset Z_m\bigcup Y\setminus Y_m.
	\end{eqnarray}
	By our choice of $A_Y$, $rA_Y+\det f_*(A_X)$ is an ample line bundle on $Y$,  thus let  $m$ tends to infinity, we obtain the pseudo-effectivity of $-K_Y-\Delta$.  Moreover, it follows from \eqref{base}  that the restricted base locus $$\mathbf{B}_-(-K_Y-\Delta)\subset \bigcup_{m=1}^{\infty}Z_m\bigcup Y\setminus Y_m=f\big(\mathbf{B}_-(-K_X-D-f^*\Delta)\big)\bigcup Z_D.$$ 
	Hence Claim \eqref{claim2}  is proved.
	
	\medskip
	
Let us prove Claim \eqref{claim1}.	Let $p : Y'\rightarrow Y $ be a log-resolution of singularities of $Y$.
Let $\mu : X'\rightarrow  X$ be a log resolution of $(X,D)$, such that the induced	rational map $f':X'\rightarrow Y'$
is in fact a morphism. We have
the following commutative diagram:
\begin{displaymath}
\xymatrix{ X' \ar[d]^{f'} \ar[dr]^{g} \ar[r]^{\mu} & X \ar[d]^-{f}\\
	Y' \ar[r]^{p} & Y.
}
\end{displaymath}	
Moreover,   $K_{X'}+D'=\mu^*(K_X+D)+F$, where 
 $D'$ and $F$ are both effective 	$\mathbb{Q}$-divisors without common components. Moreover, $D'$ is log canonical and $F$ is exceptional. For any $q\in \mathbb{N}$ such that $q(K_X+D)$ is a Cartier divisor, both $qF$ and $qD'$ are also Cartier.
	By \cite[Lemma 2.6]{BBP13}, we also have
	$$
	\mu\Big({\rm NNef}\big(-\pi^*(K_X+D)\big)\Big)\subset {\rm NNef}\big(-(K_X+D)\big).
	$$
	Thus by the assumption of the theorem,  we have
	\begin{eqnarray}
	f'\big({\rm NNef}(-K_{X'}-D'+F)\big)\subsetneq Y'.
	\end{eqnarray} 
	Repeat the proof of Claim \eqref{claim2} with $K_X+D$ replaced by $K_X'+D'-F$, one can prove that, after one fixes certain ample divisors $A_{X'}$ and $A_{Y'}$ over $X'$ and $Y'$, for any $m\in \mathbb{N}$,
	the restriction
	$$H^0 (X', mq K_{X'/Y'}  -mqK_{X'}+mqF+A_{X'}+f'^* A_{Y'}) \rightarrow H^0 (X'_y ,  (mqF+A_{X'})_{\upharpoonright X'_y})$$
	is surjective for a general point $y$ in $Y$. Since $qF$ is an effective exceptional divisor, the natural inclusion
	$$
	H^0 (X', mq K_{X'/Y'}  -mqK_{X'}+A_{X'}+f'^* A_{Y'})\rightarrow H^0 (X', mq K_{X'/Y'}  -mqK_{X'}+mqF+A_{X'}+f'^* A_{Y'})
	$$
	is an isomorphism, and thus
	the restriction
	$$H^0 (X', mq K_{X'/Y'}  -mqK_{X'}+A_{X'}+f'^* A_{Y'}) \rightarrow H^0 (X'_y ,  A_{X \upharpoonright X'_y})$$
	is surjective for a general point $y$ in $Y'$. By the same proof as above, we conclude that $-K_{Y'}$ is pseudo-effective, and it follows from Lemma \ref{inherience} below that $-K_Y$ is also pseudoeffective.   We finish the proof of Claim \eqref{claim1}. 
\end{proof}

\begin{lem}\label{inherience}
	Let $\mu:Y'\rightarrow Y$ be a birational morphism from the smooth projective variety $Y'$ to the normal $\mathbb{Q}$-Gorenstein variety $Y$. When $-K_{Y'}$ (resp. $K_{Y'}$) is pseudo-effective, so is $-K_Y$ (resp. $K_Y$).
\end{lem}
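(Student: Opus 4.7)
The plan is to descend pseudo-effectivity from $Y'$ to $Y$ via the birational pushforward. The algebraic backbone is the discrepancy formula $K_{Y'} = \mu^* K_Y + \sum a_i E_i$, where the $E_i$ are the $\mu$-exceptional prime divisors and $a_i \in \mathbb{Q}$ (possibly of mixed sign, since $Y$ is only assumed $\mathbb{Q}$-Gorenstein, not klt). The key observation is that $\mu_* E_i = 0$ for every exceptional divisor, so passing to pushforwards one obtains $\mu_* K_{Y'} = K_Y$ as $\mathbb{Q}$-Weil divisor classes on the normal base $Y$.

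Next, fix an ample Cartier divisor $A$ on $Y$. Since $\mu$ is birational, $\mu^* A$ is nef with top self-intersection $(\mu^* A)^{\dim Y'} = A^{\dim Y} > 0$, so $\mu^* A$ is big and nef on $Y'$. Assuming $-K_{Y'}$ is pseudo-effective, the class $-K_{Y'} + \epsilon \mu^* A$ is the sum of a pseudo-effective class and a big class, hence big, for every rational $\epsilon > 0$. Therefore, for $m$ sufficiently divisible there exists a nonzero section $s \in H^0\bigl(Y', m(-K_{Y'} + \epsilon \mu^* A)\bigr)$. I then transfer $s$ to $Y$: on the open set $U := Y' \setminus {\rm Exc}(\mu)$, the morphism $\mu$ is an isomorphism onto its image $V$, and $Y \setminus V$ has codimension at least two in $Y$. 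Through this isomorphism $s|_U$ yields a nonzero section of $m(-K_Y + \epsilon A)$ on $V$, and by normality of $Y$ together with Hartogs-type extension (taking $m$ divisible enough that $m(-K_Y + \epsilon A)$ is an integral Cartier divisor), this section extends uniquely to a global section on $Y$. Thus $-K_Y + \epsilon A$ is big for every rational $\epsilon > 0$, which is precisely the statement that $-K_Y$ is pseudo-effective.

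The case where $K_{Y'}$ is pseudo-effective is handled by the identical argument after replacing $-K_{Y'}$ by $K_{Y'}$ throughout, using again $\mu_* K_{Y'} = K_Y$. I do not anticipate any substantive obstacle here: the proof is essentially formal once one is careful about the pushforward of Weil divisors on a normal variety and invokes Hartogs extension across the codimension-two locus $Y \setminus V$. The only mildly delicate point is ensuring the $\mathbb{Q}$-Cartier hypothesis on $K_Y$ is used consistently so that all pullbacks, pushforwards, and the notion of bigness of $-K_Y + \epsilon A$ make sense as stated.
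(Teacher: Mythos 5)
Your argument is correct and follows essentially the same route as the paper: perturb by a small ample class to get bigness (hence sections) upstairs, then descend the sections to $Y$ using that the exceptional locus is irrelevant. The only cosmetic difference is the descent mechanism — you restrict to the isomorphism locus and extend by normality/Hartogs across the codimension-two complement, whereas the paper multiplies by canonical sections of the effective exceptional divisors and pushes forward; these are the same fact in different packaging.
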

\begin{proof}
	For any sufficiently large $m\in \mathbb{N}$ such that $mK_{Y}$ is Cartier, there exists effective exceptional divisors $E$ and $F$ on $Y'$ such that 
	$$
	\mu^*(-mK_Y)=-mK_{Y'}+E -F.
	$$ 
	Take an ample divisor $A$ over $Y$ such that, for some effective exceptional divisors $G$, $\mu^*A-G$ is also ample over $Y'$. When $-K_{Y'}$  is pseudo-effective,  $-mK_{Y'}+\mu^*(A)-G$ is big, and thus for a sufficiently large $l\in \mathbb{N}$, there exists a non-zero section 
	$$
	s\in H^0\big(Y',l\mu^*(A-mK_Y)-lG+lF-lE\big).
	$$
	The non-zero section $s\cdot lG\cdot lE\in H^0\big(Y',l\mu^*(A-mK_Y)+lF\big)$ gives rise to a section
	$
s'\in	H^0(Y,lA-lmK_Y).
	$
	Since $m$ can be chosen arbitrarily large, we conclude that $-K_Y$ is pseudoeffective. The same proof also holds when $K_{Y'}$ is pseudoeffective. 
\end{proof}

If  $f$ is a smooth fibration, ${\rm Supp}(D)$ is a simple normal crossing divisor, and ${\rm Supp}(D)$ is relatively normal	crossing over $Y$, then the condition that $(X,D)$ is lc implies that $(X_y,D_{\upharpoonright X_y})$ is also lc for every $y\in Y$. Thus $Z_D=\emptyset$.  If $-K_X-D-f^*\Delta$ is nef, then  $\mathbf{B}_-(-K_X-D-f^*\Delta)=\emptyset$.  Thus from Theorem \ref{DPS01},  $\mathbf{B}_-(-K_Y-\Delta)$ is also empty which implies that  $-K_Y-\Delta$ is  nef. This completes our proof of Theorem \ref{FG2}.

By setting $D=0$ and $\Delta=0$ in Theorem \ref{FG2}, the following theorem by Miyaoka is a direct consequence.
\begin{thm}[Miyaoka]\label{Miyaoka}
	Let $f:X\rightarrow Y$
	be a smooth morphism between smooth projective manifolds $X$ and $Y$. If $-K_X$ is nef, then so is $-K_Y$.
\end{thm}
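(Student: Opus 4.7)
The plan is to derive this as an immediate specialization of Theorem \ref{FG2} (or equivalently of Theorem \ref{DPS01}\eqref{claim2}) by setting $D=0$ and $\Delta=0$. Since the heavy lifting has been done in proving Theorem \ref{DPS01} via the Ohsawa-Takegoshi extension machinery, there is no substantive obstacle here: the work is simply to check that the hypotheses of Theorem \ref{FG2} are satisfied trivially in this degenerate case.

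Concretely, I would proceed as follows. With $D=0$, the pair $(X,0)$ is automatically log canonical (in fact klt), and $\mathrm{Supp}(D) = \emptyset$ is vacuously a simple normal crossing divisor that is relatively normal crossing over $Y$. The morphism $f$ is a smooth fibration by assumption. With $\Delta=0$, the hypothesis that $-(K_X+D) - f^*\Delta = -K_X$ is nef is exactly what the theorem provides. Applying Theorem \ref{FG2} then yields that $-K_Y - \Delta = -K_Y$ is nef, which is the desired conclusion.

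Alternatively, one can read this straight off Theorem \ref{DPS01}\eqref{claim2}: since $-K_X$ is nef, we have $\mathrm{NNef}(-K_X) = \emptyset$, so the non-nef locus certainly does not dominate $Y$; since $f$ is smooth the subvariety $Z$ is empty; and since $D=0$, the at-most-countable union $Z_D$ is also empty. The theorem then gives $\mathrm{NNef}(-K_Y) \subset \emptyset$, i.e., $-K_Y$ is nef. The only mildly delicate point worth flagging is that the smoothness of $f$ ensures $Z = \emptyset$ (rather than merely a proper subvariety), so that no spurious locus appears on the right-hand side of the containment, and this is what upgrades pseudo-effectivity to nefness.
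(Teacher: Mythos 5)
Your proposal is correct and matches the paper exactly: Miyaoka's theorem is obtained there by setting $D=0$ and $\Delta=0$ in Theorem \ref{FG2}, and your alternative reading via Theorem \ref{DPS01}\eqref{claim2} (with ${\rm NNef}(-K_X)=\emptyset$, $Z=\emptyset$, $Z_D=\emptyset$) is precisely how the paper deduces Theorem \ref{FG2} itself. Nothing is missing.
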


\begin{rem}
	The original proof of Miyaoka \cite{Miy93} relies on the mod $p$ reduction arguments. There is also another  Hodge theoretic proof by Fujino and Gongyo without using the mod $p$ reduction arguments \cite{FG14}.
\end{rem}

\begin{rem}
	In \cite{CZ13}, M. Chen and Q. Zhang proved the similar result  as Claim \eqref{claim1} in Theorem \ref{DPS01}, under the stronger assumption that    $-(K_X+D)$ is nef. In a very recent preprint \cite{Ou17}, W. Ou extended the theorem by Chen-Zhang to the rational dominant maps, which was a crucial step in his proof of the \emph{generic nefness conjecture} for tangent sheaves by T. Peternell \cite[Conjecture 1.5]{Pet12}.
\end{rem}

\section{\sc On the Inheritance of the Image}

\subsection{On the Images of Weak KLT Fano Manifolds}
One says that a projective manifold $X$ is   weak Fano   if $-K_X$ is big and nef. In the series of articles \cite{FG12} and \cite{FG14}, Fujino and Gongyo studied the image of weak Fano manifolds. They proved the following theorem:
\begin{thm}[Fujino-Gongyo]\label{FG}
	Let $f:X\rightarrow Y$ be a smooth fibration between two smooth manifolds $X$ and $Y$. If $X$ is weak Fano, then so is $Y$.
\end{thm}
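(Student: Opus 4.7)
The plan is to deduce Theorem \ref{FG} as a direct combination of two results already established earlier in the excerpt: Miyaoka's theorem (Theorem \ref{Miyaoka}) for the nef half, and Claim \eqref{claimb} of Theorem \ref{main2} for the big half. The hypothesis ``$X$ is weak Fano'' packages two distinct pieces of information, namely that $-K_X$ is nef and that $-K_X$ is big, and I would feed them into separate machines rather than trying to treat them simultaneously.

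For the nef part, the smooth fibration $f:X\to Y$ together with $-K_X$ nef is precisely the input of Miyaoka's theorem, which in this paper is obtained as a consequence of Theorem \ref{FG2} specialized to $D=0$, $\Delta=0$. This immediately yields that $-K_Y$ is nef, so no further work is needed beyond citing the already-proven result.

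For the big part, I would apply Claim \eqref{claimb} of Theorem \ref{main2} with $D=0$. Because $-K_X$ is nef, the non-nef locus ${\rm NNef}(-K_X)$ is empty; because $D=0$, the non-klt locus ${\rm Nklt}(X,0)$ is also empty. Consequently the restriction of $f$ to ${\rm NNef}(-K_X)\cup {\rm Nklt}(X,0)=\emptyset$ vacuously fails to dominate $Y$. Since $-K_X$ is big by hypothesis, Claim \eqref{claimb} of Theorem \ref{main2} then delivers the bigness of $-K_Y$, completing the proof.

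The main ``obstacle'' has in fact been discharged elsewhere: all of the analytic content, in particular the Ohsawa-Takegoshi extensions and the multiplier ideal bookkeeping on general fibres, is hidden inside Theorems \ref{DPS01} and \ref{main2}. The only conceptual point worth flagging is that the two defining features of weak Fano are transported by genuinely different mechanisms, a pseudo-effectivity argument via the Ohsawa-Takegoshi machinery for nefness and its bigness variant (packaged into Theorem \ref{main2}\eqref{claimb}) for bigness, so both halves of the hypothesis are used in an essential way and neither conclusion can be extracted from the other.
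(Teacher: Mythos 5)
Your proof is correct and follows essentially the same route as the paper, which obtains Theorem \ref{FG} by setting $D=0$, $\Delta=0$ in Corollary \ref{main3} (itself the combination of Theorem \ref{DPS01} for nefness and Theorem \ref{main2} for bigness). The only cosmetic difference is that you invoke Miyaoka's theorem and Claim \eqref{claimb} of Theorem \ref{main2}, whereas the paper uses Theorem \ref{DPS01} directly and Claim \eqref{claima}; these are interchangeable here since with $D=0$ and $-K_X$ nef all the relevant exceptional loci are empty.
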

In this section, we will prove Claim \eqref{claima} in Theorem \ref{main2}:
\begin{proof}[Proof of Claim \eqref{claima} in Theorem \ref{main2}]
Take   a very ample line bundle $A_Y$ over $Y$ such that $A_Y$ generates $n+1$ jets everywhere. Since $-K_X-D-f^*\Delta$ is big, we can find a a sufficiently divisible $a\in \mathbb{N}$ such that $-a(K_X+D+f^*\Delta)-2f^*A_Y$ is an effective line bundle. Fix any effective divisor $E\in |-a(K_X+D+f^*\Delta)-2f^*A_Y|$. Since $(X,D)$ is klt,  then there exists a sufficiently divisible integer $m>a$ such that   the multiplier ideal sheaves 
	\begin{eqnarray}\label{multiplier 1}
	\jc(\frac{1}{m-1}E_{\upharpoonright X_y})=\jc(\frac{m}{m-1}D_{\upharpoonright X_y})=\oc_{X_y}
	\end{eqnarray} for the generic fiber $X_y$, and both $mD$ and $m\Delta$ are  $\mathbb{Z}$-divisors. We can also find a singular hermitian metric $h_1$ with neat singularities on $-(m^2-a)(K_X+D+f^*\Delta)$ such that $\ir \Theta_{h_1}\geq \tilde{\omega}$ for some K\"ahler metric $\tilde{\omega}$ on $X$. Take some small rational number $\epsilon>0$ such that  $\jc(h_{1\upharpoonright X_y}^{\epsilon})=\oc_{X_y}$ for the generic fiber $X_y$.
	
	On the other hand, since the non-nef locus ${\rm NNef}(-K_X-D-f^*\Delta)$ does not project onto $Y$, it follows from the proof of Theorem \ref{DPS01} in Section \ref{question dps} that, one can  find a singular hermitian metric $h_\epsilon$ over $-(m^2-a)(K_X+D+f^*\Delta)$ with neat singularities, such that $\ir \Theta_{h_\epsilon}\geq -\epsilon\tilde{\omega}$ and the singularities of $h_\epsilon$ does not project onto $Y$. Set $h:=h_1^\epsilon h_\epsilon^{1-\epsilon}$ which is also a hermitian metric on $-(m^2-a)(K_X+D+f^*\Delta)$, then we have $\ir \Theta_{h}\geq \epsilon^2\tilde{\omega}$  and the multiplier ideal sheaf
	\begin{eqnarray}\label{multiplier 2}
	\jc(h_{\upharpoonright X_y})=\oc_{X_y}
	\end{eqnarray} 
	for the generic fiber $X_y$. 	
	
	Take a generic fiber $X_y$ of $f$ such that $y$ is the regular value of $f$, and both (\ref{multiplier 1}) and (\ref{multiplier 2}) are satisfied. We equip the line bundle $-m^2(K_X+D+f^*\Delta)-2f^*A_Y+m^2 D$ with the singular hermitian metric $h_0:=h_Ehh_D^{\otimes m^2}$, where $h_E$ (resp. $h_D$) is the tautological singular hermitian metric on $-a(K_X+D+f^*\Delta)-2f^*A_Y$ ( resp. $D$) induced by the effective divisor $E$ (resp. $D$), such that
	$$
	\ir \Theta_{h_E}=[E] (\mbox{ resp. } \ir \Theta_{h_D}=[D]).
	$$ 
	Then we claim that the multiplier ideal sheaf $\jc(h_{0 \upharpoonright X_y}^{\frac{1}{m^2}})=\oc_{X_y}$. Indeed, for any $s\in \oc_{X_y,z}$, let $\varphi_E$, $\varphi_D$ and $\varphi$   be the weights of the metric $h_E$, $h_D$ and $h$ on a small neighborhood $U\subset X_y$ of a point $z\in X_y$. Then by the H\"older inequality we have
	$$
	\int_{U}|s|^2e^{-\frac{\varphi_E+\varphi}{m^2}+\varphi_D}\leq (\int_U |s|^2e^{-\varphi})^{\frac{1}{m^2}}\cdot (\int_U |s|^2e^{-\frac{\varphi_E}{m-1}})^{\frac{m-1}{m^2}}\cdot (\int_U |s|^2e^{-\frac{m}{m-1}\varphi_D})^{\frac{m-1}{m}}<+\infty.
	$$
	Here we use the conditions (\ref{multiplier 1}) and (\ref{multiplier 2}). By  applying Theorem \ref{extension2} with $L=-m^2(K_X+f^*\Delta)-2f^*A_Y$ endowed with the singular hermitian metric $h_0$, for general $y\in Y$, we obtain the desired surjectivity:
	$$H^0 \big(X, m^2 K_{X/Y} + (-m^2K_X-m^2f^*\Delta-2f^*A_Y) +f^* A_Y\big) \rightarrow H^0 \big(X_y ,  f^*(-m^2K_Y-m^2\Delta-A_Y)_{\upharpoonright X_y}\big)=\cb^{ l},$$
	where $l$ is the number of the connected components of $X_y$.
	In particular, we have the non-vanishing 
	$$
	H^0\big(X, f^*(-m^2K_Y-m^2\Delta-A_Y)\big)\neq 0.
	$$
	Now we claim that  $-m^2K_Y-m^2\Delta-A_Y$ is a pseudo-effective line bundle over $Y$. Indeed, we first take a Stein factorization of $f$
	$$
	X\xrightarrow{f'}Y'\xrightarrow{p}Y,
	$$
	where $p:Y'\to Y$ is a finite surjective morphism and the morphism $f':X\rightarrow Y'$ has connected fibers. Then we have an isomorphism
	$$
	f'_*:H^0\big(X, f^*(-m^2K_Y-m^2\Delta-A_Y)\big)\xrightarrow{\cong} H^0\big(Y',p^*(-m^2K_Y-m^2\Delta-A_Y)\big),
	$$
	which implies that the line bundle $p^*(-m^2K_Y-m^2\Delta-A_Y)$ is effective. Since $p:Y'\to Y$ is a finite surjective morphism, by a result of S. Boucksom \cite[Proposition 4.2]{Bou02}, $-m^2K_Y-m^2\Delta-A_Y$ is   a pseudo-effective line bundle,  which also shows that $-K_Y-\Delta$ is big.
\end{proof}

Therefore, we can extend Theorem \ref{FG} to the weak klt Fano cases:
\begin{proof}[Proof of Theorem \ref{main3}]
	Since $f$ is a smooth fibration, $(X,D)$ is klt, and $(X_y,D_{\upharpoonright X_y})$ is also klt for every $y\in Y$, from the very definition of $Z_D$ in Theorem \ref{DPS01} we see that  $Z_D=\emptyset$. By the nefness of $-(K_X+D)-f^*\Delta$, the set  $$\mathbf{B}_-\big(-(K_X+D)-f^*\Delta\big)=\emptyset.$$
	Thus from Theorem \ref{DPS01} we conclude that $-K_Y-\Delta$ is nef. The bigness of $-K_Y-\Delta$ follows from Theorem \ref{main3} directly. This completes the proof.
\end{proof}

By setting $D=0$ and $\Delta=0$ in Theorem \ref{main3},  we obtain Theorem \ref{FG} directly.

\begin{rem}
	If we only  assume that $-K_X$ is big, then the following example given in \cite{FG12} shows that, even if $f$ is smooth, $-K_Y$ is not big.
\end{rem}
\begin{example}
	Let $E\subset \pb^2$ be a smooth cubic curve. Consider
	$f:X=\pb_E(\oc_E\oplus \oc_E(1))\rightarrow E=Y$. Then, we see that $-K_X$ is big.	However, $-K_Y$ is not big since $E$ is a smooth elliptic curve.
\end{example}

\medskip

It is noticeable that, in \cite{Pau12} S. Boucksom pointed out that the following
theorem, which is a special case of Theorem 1.2 in \cite{Ber09}, implies    \cite[Theorem 4.1]{FG12} or  \cite[Corollary 2.9]{KMM92}:
\begin{thm}[Boucksom-P\u{a}un]
	Let $f:X\rightarrow Y$ be a smooth fibration between two smooth manifolds. If $-K_X$ is semi-positive (strictly positive), then $-K_Y$ is semi-positive (strictly positive).
\end{thm}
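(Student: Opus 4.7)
The plan is to derive this statement from Berndtsson's positivity theorem for direct images of twisted relative canonical bundles (Theorem~1.2 of \cite{Ber09}), which is exactly the result the paragraph above the statement cites. Recall that Berndtsson's theorem says: for a smooth proper K\"ahler submersion $f:X\to Y$ and a line bundle $L$ on $X$ endowed with a smooth Hermitian metric $h_L$ of semi-positive (resp.\ strictly positive) curvature, the $L^2$-metric on the direct image $f_*(K_{X/Y}\otimes L)$ is Nakano semi-positive (resp.\ strictly positive), provided the fiberwise cohomology $H^0\bigl(X_y,(K_{X/Y}+L)_{\upharpoonright X_y}\bigr)$ has constant rank.

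The first step is to realize $-K_Y$ itself as such a direct image. Using the relative canonical bundle formula $K_{X/Y}=K_X-f^*K_Y$, I would rewrite
\[
-f^*K_Y \;=\; K_{X/Y}+(-K_X),
\]
set $L:=-K_X$ equipped with the given metric, and apply $f_*$. The projection formula together with $f_*\oc_X=\oc_Y$ (valid because $f$ is a smooth fibration, hence has connected fibers) yields
\[
f_*\bigl(K_{X/Y}+L\bigr)\;=\;-K_Y.
\]

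The second step is to verify the constant-rank hypothesis on fibers. Because $f$ is a smooth submersion, $K_{X/Y\upharpoonright X_y}=K_{X_y}$ and $L_{\upharpoonright X_y}=-K_{X_y}$, so
\[
(K_{X/Y}+L)_{\upharpoonright X_y}\;=\;\oc_{X_y},
\]
which has $H^0\cong\cb$ for every $y$. Thus Berndtsson's theorem applies and endows the line bundle $-K_Y$ with a smooth Hermitian metric of semi-positive (resp.\ strictly positive) curvature; this is exactly the definition of $-K_Y$ being semi-positive (resp.\ strictly positive).

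The main obstacle is Berndtsson's theorem itself, invoked here as a black box. The only subtlety worth checking is that the $L^2$-metric on the rank-one direct image is genuinely smooth, rather than merely upper semi-continuous plurisubharmonic, but this follows from the smoothness of $h_L$ and of the submersion, combined with the triviality of the fiberwise bundle, which makes the $L^2$-norm of a local generator depend smoothly on $y$. An alternative and more elementary description is via fiber integration: a smooth semi-positive metric on $-K_X$ is the same as a smooth positive volume form on $X$; integrating it along the smooth fibers produces a smooth positive volume form on $Y$, i.e.\ a smooth Hermitian metric on $-K_Y$, and Berndtsson's theorem is then the statement that semi-positivity of curvature is preserved under this pushforward.
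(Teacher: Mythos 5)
Your derivation is correct and is exactly the route the paper intends: the paper states the result only as "a special case of Theorem 1.2 in \cite{Ber09}", and your argument simply makes that specialization explicit by writing $-f^*K_Y=K_{X/Y}+(-K_X)$, identifying $f_*\bigl(K_{X/Y}+(-K_X)\bigr)=-K_Y$ via the projection formula and connectedness of fibers, and reading off the curvature sign of the $L^2$-metric from Berndtsson's theorem. No discrepancy with the paper's approach.
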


\medskip

Finally, let us mention that, in \cite{FG12}, the authors raised the following conjecture, which was solved very recently by C. Birkar and Y. Chen \cite{BC16}:
\begin{thm}[Fujino-Gongyo, Birkar-Chen]
	Let $f:X\rightarrow Y$ be a smooth fibration between two smooth projective manifolds. If $-K_X$ is semi-ample, then so is $-K_Y$.
\end{thm}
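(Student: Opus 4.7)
The plan is to descend the semi-ample structure from $-K_X$ to $-K_Y$ in two stages: first establish nefness, and then lift to base-point-freeness of a power. By Miyaoka's theorem (Theorem \ref{Miyaoka}), which the paper reproves in the course of deriving Theorem \ref{FG2}, the nefness of $-K_Y$ is automatic, so the task reduces to the abundance-type statement that some $-mK_Y$ is globally generated.

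Fix $m_0$ sufficiently divisible so that $|-m_0K_X|$ is base-point-free, producing the Iitaka morphism $\phi:X\to Z$ with $-m_0K_X=\phi^{*}H$ for $H$ ample on a normal projective variety $Z$. Because $f$ is smooth, $K_X|_{X_y}=K_{X_y}$, so each fiber inherits a semi-ample anticanonical class, and the restrictions $\phi|_{X_y}$ form a family of Iitaka morphisms over $Y$. Consider the joint map $(f,\phi):X\to Y\times Z$ and take the Stein factorization of its composition with the first projection, giving $X\xrightarrow{g} W\xrightarrow{\pi} Y$, where $W$ is the normalization of the image. By construction, $g$ plays the role of the \emph{relative} Iitaka fibration of $-K_X$ over $Y$, and a generic fiber of $g$ is a smooth projective variety with numerically trivial anticanonical bundle.

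Apply the Ambro-Fujino-Mori canonical bundle formula to $g:X\to W$. Combining the $K$-triviality of the generic fibers of $g$ with the semi-ampleness of $-K_X$ yields, after passing to a suitable log-resolution, the semi-ampleness of $-(K_W+B_W+M_W)$, where $B_W$ is the discriminant divisor and $M_W$ is the moduli $b$-divisor of the fibration. Finally, descend along $\pi:W\to Y$: since $f$ is smooth, a general fiber of $\pi$ is the target of the Iitaka morphism of $-K_{X_y}$ and varies in a controlled way with $y$, so one can combine positivity of relative adjoint bundles (Berndtsson, P\u{a}un-Takayama) with the descent of pseudo-effectivity under finite surjective morphisms (the Boucksom argument used at the end of the proof of Theorem \ref{main2}\eqref{claima}) to transfer semi-ampleness from $W$ to $Y$.

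The main obstacle lies in establishing semi-ampleness, rather than merely nefness, of the moduli $b$-divisor $M_W$. Its nefness is a classical consequence of variations of Hodge structure, but semi-ampleness requires substantial additional input, including boundedness results for the Calabi-Yau-type general fibers of $g$; this is the essential contribution of Birkar-Chen. A secondary difficulty is that $\pi:W\to Y$ is typically not smooth, so the final descent step must deal with possible branching of $\pi$, which we handle by using the smoothness of $f$ to control the discriminant of $\pi$ and by passing to a finite cover of $Y$ if necessary.
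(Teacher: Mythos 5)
First, note that the paper does not prove this statement at all: it is quoted as the Birkar--Chen theorem \cite{BC16}, and immediately after stating it the author remarks that the known proof ``relies on very deep consequences of the minimal model program in birational geometry and of Hodge theory'' and poses the existence of a purely analytic proof as an open question. So there is no in-paper argument for you to have reproduced; your proposal has to stand on its own as a proof, and it does not.

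The decisive gap is that you black-box exactly the hard step. After setting up the relative Iitaka fibration $g:X\to W$ and invoking the canonical bundle formula, you state that semi-ampleness of the moduli $b$-divisor $M_W$ ``is the essential contribution of Birkar--Chen.'' Citing the main technical content of the only known proof of the theorem as an input is circular; everything preceding it is the routine part. There are also concrete defects in the surrounding construction. The map you propose to Stein-factorize --- $(f,\phi)$ composed with the first projection --- is just $f$ itself, whose Stein factorization returns $Y$ because $f$ has connected fibers; what you need is the normalized image of $(f,\phi)$ in $Y\times Z$, together with a separate argument that $\phi\upharpoonright_{X_y}$ really is the Iitaka map of $-K_{X_y}$ (semi-ampleness of $-K_X$ does not by itself identify the fiberwise Iitaka fibrations with the restriction of the global one). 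Finally, the descent from $W$ to $Y$ is not an argument: $\pi:W\to Y$ is in general not finite (its fibers are the Iitaka images of the fibers $X_y$), so Boucksom's descent of pseudo-effectivity under finite surjective morphisms, as used at the end of the proof of Theorem \ref{main2}\eqref{claima}, does not apply; and even where it does apply it yields only pseudo-effectivity, which is far weaker than semi-ampleness. What the analytic machinery of this paper genuinely delivers is nefness of $-K_Y$ (Theorems \ref{Miyaoka} and \ref{FG2}) and bigness under a bigness hypothesis (Theorem \ref{main2}); semi-ampleness lies strictly beyond those methods, which is precisely why the paper leaves this theorem as a citation rather than a proof.
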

The proof in \cite{BC16} relies on very deep consequences of the minimal model program in birational geometry and of Hodge theory. It is an interesting question to know  whether we can use pure analytic methods to give a new proof of this theorem.

\subsection{On the Rational Connectedness of the Image}
By Mori's bend-and-break, Fano varieties are uniruled; in fact by \cite{Cam92,KMM92} a stronger result
holds:
the projective Fano variety is rationally connected.
Later on Q. Zhang and Hacon-McKernan   proved that the same conclusion holds for
a klt pair $(X,D)$ such that $-(K_X+D)$ is big and nef \cite{Zha06,HM07}. This was generalized by Broustet and Pacienza \cite[Theorem 1.2]{BrP11}, who proved
that a klt pair $(X,D)$ with $-(K_X+D)$   big is rationally connected modulo the
non-nef locus of $-(K_X+D)$, that is, there
exists an irreducible 
component $V$ of ${\mathbf{B}_-}\big(-(K_X+D)\big)$ such that for any general point $x$ of $X$ there exists a rational 
curve $R_x$ passing through $X$ and intersecting
$V$. Moreover, they also proved the following result for the image:
\begin{thm}[Broustet-Pacienza]\label{BP}
	Let $(X,D)$ be a   pair such that $-(K_X+D)$ is  big. Let $f:X\dashrightarrow Y$ be
	a dominant rational map with 
	connected fibers such that the restriction of $f$  to  ${\rm NNef}\big(-(K_X+D)\big)\bigcup {\rm Nklt}(X,D)$ does not dominate $Y$, then $Y$ is uniruled.
\end{thm}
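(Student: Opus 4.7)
My plan is to reduce Theorem \ref{BP} to Claim~\eqref{claimb} of Theorem \ref{main2}, which yields the bigness of $-K_Y$, and then to conclude by the characterization of uniruled smooth projective manifolds due to Boucksom-Demailly-P\u{a}un-Peternell \cite{BDPP13}.

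First, I would reduce to the case where $X$ and $Y$ are smooth and $f$ is a morphism. Since uniruledness is a birational invariant, it suffices to establish it on some smooth projective birational model $Y'$ of $Y$. Choose a common log resolution $\mu : X' \to X$ of $(X, D)$ and a resolution $p : Y' \to Y$ such that $p^{-1}\circ f\circ \mu$ lifts to a morphism $f' : X' \to Y'$, by Hironaka. As in the proof of Theorem \ref{DPS01}, write
\[
K_{X'} + D' = \mu^{*}(K_X + D) + F,
\]
with $D'$ effective and log canonical and $F$ an effective $\mu$-exceptional divisor. Using the behaviour of non-nef loci under pullback (\cite[Lemma 2.6]{BBP13}, already invoked in Section \ref{question dps}) together with the compatibility of the non-klt locus with log resolutions, both $\mathrm{NNef}\bigl(-(K_{X'}+D')\bigr)$ and $\mathrm{Nklt}(X', D')$ lie in $\mu^{-1}\bigl(\mathrm{NNef}(-(K_X+D)) \cup \mathrm{Nklt}(X,D)\bigr) \cup \mathrm{Exc}(\mu)$, whose image in $Y'$ under $f'$ is a proper subvariety. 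A small perturbation of $D$ by an effective divisor drawn from a rational multiple of $|-(K_X+D)-\varepsilon A|$ with $A$ ample allows one to preserve both the bigness of $-(K_{X'}+D')$ and the log canonicity of the pair on the resolution.

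Second, I would apply Claim~\eqref{claimb} of Theorem \ref{main2} to $f' : X' \to Y'$ to conclude that $-K_{Y'}$ is big. In particular, $K_{Y'}$ is not pseudo-effective, for otherwise the sum $-K_{Y'}+K_{Y'}=0$ would be big, a contradiction. By the Boucksom-Demailly-P\u{a}un-Peternell characterization, a smooth projective manifold whose canonical class is not pseudo-effective is uniruled. Hence $Y'$, and therefore $Y$, is uniruled.

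The main obstacle lies in the first step. The relation $-(K_{X'}+D') = -\mu^{*}(K_X+D) - F$ does \emph{not} a priori preserve bigness, because subtracting an effective $\mu$-exceptional divisor can kill the volume (as the blow-up of $\mathbb{P}^2$ at a point with $\mu^{*}H - E$ having vanishing self-intersection illustrates). One must therefore choose the perturbed log canonical pair on $X'$ with genuine care, ensuring that the bigness of the new anti-log-canonical class, the log canonicity of the pair, and the non-dominance of the combined $\mathrm{NNef}\cup\mathrm{Nklt}$-locus all survive simultaneously. Once this bookkeeping is in place, the remainder of the argument is formal.
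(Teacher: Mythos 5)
The paper does not actually prove Theorem \ref{BP}: it is quoted from Broustet--Pacienza \cite{BrP11}, and what the paper proves is the refinement Theorem \ref{RC}, whose proof is precisely your second step --- apply Claim \eqref{claimb} of Theorem \ref{main2} to get $-K_Y$ big, then conclude (the paper invokes \cite[Theorem 1.2]{BrP11} to get rational connectedness modulo ${\rm NNef}(-K_Y)$, whereas you invoke \cite{BDPP13} to get mere uniruledness; both are legitimate for the respective conclusions). So for $f$ a \emph{morphism} your argument is correct and coincides with the paper's route; note that Claim \eqref{claimb} is already stated for $X$ normal and its proof performs the log resolution internally, so no preliminary reduction is needed in that case.

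Two remarks on your first step, which is where the only new content for a rational map lies. First, your worry about the bigness of $-(K_{X'}+D')$ is a red herring: the proof of Claim \eqref{claimb} never uses that divisor, but rather $-(K_{X'}+D'-F)=-\mu^*(K_X+D)$, which is automatically big as the pullback of a big class under a birational morphism; the effective exceptional $F$ is carried through the extension theorem as a twist by $m^2F$ and absorbed at the end via $H^0(X',\cdot)\cong H^0(X',\cdot+m^2F)$. Your proposed perturbation of $D$ is therefore unnecessary, and as described it is also dangerous, since adding an effective divisor to $D$ can destroy log canonicity. Second, and this is the genuine gap: you assert that ${\rm NNef}$ and ${\rm Nklt}$ upstairs lie in $\mu^{-1}\big({\rm NNef}(-(K_X+D))\cup{\rm Nklt}(X,D)\big)\cup{\rm Exc}(\mu)$ ``whose image in $Y'$ under $f'$ is a proper subvariety.'' When $f$ is only a rational map this is false in general: $f'\big({\rm Exc}(\mu)\big)$, and more generally $f'$ of the \emph{total} transform $\mu^{-1}(V)$ of a subvariety $V$ meeting the indeterminacy locus of $f$, can dominate $Y'$ even though the restriction of the rational map $f$ to $V$ does not dominate $Y$ (only the strict transform of $V$ is controlled by $f$). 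Since ${\rm Nklt}(X',D')$ and the relevant non-nef locus are a priori only known to sit inside total transforms, you must argue separately that they avoid the offending exceptional components --- e.g.\ by choosing the resolution so that the bad locus upstairs still fails to dominate, which is exactly the point one would need to extract from \cite{BrP11}. Until that is done, the reduction from the rational-map statement to the morphism statement is incomplete.
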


In this subsection, we will refine their results in a more general setting. First, we need to  prove Claim \eqref{claimb} in Theorem \ref{main2}:
\begin{proof}[Proof of Claim \eqref{claimb} in Theorem \ref{main2}]
	Let $p : Y'\rightarrow Y $ be a log-resolution of singularities of $Y$.
	Let $\pi : X'\rightarrow  X$ be a log resolution of $(X,D)$, such that the induced	rational map $f':X'\rightarrow Y'$
	is in fact a morphism. We have
	the following commutative diagram:
	\begin{displaymath}
	\xymatrix{ X' \ar[d]^{f'} \ar[dr]^{g} \ar[r]^{\pi} & X \ar[d]^-{f}\\
		Y' \ar[r]^{p} & Y.
	}
	\end{displaymath}
	Let $D'$ be an effective
	$\mathbb{Q}$-divisor on $X'$ such that $\pi_*(D')=D$ and	$K_{X'}+D'=\pi^*(K_X+D)+F$,
	with $F$ effective and not having common components with $D'$.  Note that
	$$
	\pi\big({\rm Nklt}(X',D') \big)\subset {\rm Nklt}(X,D).
	$$
	By \cite[Lemma 2.6]{BBP13}, we also have
	$$
	\pi\Big({\rm NNef}\big(-\pi^*(K_X+D)\big)\Big)\subset {\rm NNef}\big(-(K_X+D)\big).
	$$
It then follows from the assumption of the theorem that
	\begin{eqnarray}\label{notdomin}
	f'\big({\rm NNef}(-K_{X'}-D'+F)\bigcup {\rm Nklt}(X',D')\big)\subsetneq Y'
	\end{eqnarray} 
	Take a very ample line bundle $A_{Y'}$ over $Y'$ such that $A_{Y'}-K_{Y'}$ generates $n+1$ jets everywhere. We can take an ample line $A_{Y'}:=p^*A_{Y'}-E_Y$ over $Y'$, where $E_Y=\sum_j c_jE_j'$'s are exceptional divisors of $p$.  Since $-K_X-D$ is big, so is $-K_{X'}-D'+F$, and  we can find a sufficiently divisible $a$ such that $-a(K_{X'}+D'-F)-2f'^*A_{Y'}$ is an effective line bundle. Fix any effective divisor $E\in |-a(K_{X'}+D'-F)-2f'^*A_{Y'}|$. By \eqref{notdomin},  for any sufficiently divisible $m>a$,   the multiplier ideal sheaf 
	\begin{eqnarray}\label{multiplier 3}
	\jc(\frac{1}{m-1}E_{\upharpoonright X'_{y}})=\jc(\frac{m}{m-1}D'_{\upharpoonright X'_{y}})=\oc_{X'_{y}}
	\end{eqnarray} for the generic (smooth) fiber $X'_{y}$  of $f':X'\rightarrow Y'$. We can also find a singular hermitian metric $h_1$ with neat singularities on $-(m^2-a)(-K_{X'}-D'+F)$ such that $\ir \Theta_{h_1}\geq \tilde{\omega}$ for some K\"ahler metric $\tilde{\omega}$ on $X'$. Take some small rational number $\epsilon>0$ such that  $\jc(h_{1\upharpoonright X'_{y}}^{\epsilon})=\oc_{X'_{y}}$ for the generic fiber $X'_{y}$.
	
	On the other hand, it follows from \eqref{notdomin} that the non-nef locus ${\rm NNef}(-K_{X'}-D'+F)$ does not project onto $Y'$, and from the proof of Theorem \ref{DPS01} in Section \ref{question dps},    one can  find a singular hermitian metric $h_\epsilon$ over $-(m^2-a)(K_{X'}+D'-F)$ with neat singularities, such that $\ir \Theta_{h_\epsilon}\geq -\epsilon\tilde{\omega}$ and the singularities of $h_\epsilon$ does not project onto $Y$. Set $h:=h_1^\epsilon h_\epsilon^{1-\epsilon}$ which is also a hermitian metric on $-(m^2-a)(K_{X'}+D'-F)$, then we have $\ir \Theta_{h}\geq \epsilon^2\tilde{\omega}$  and the multiplier ideal sheaf
	\begin{eqnarray}\label{multiplier 4}
	\jc(h_{\upharpoonright X'_{y}})=\oc_{X'_{y}}
	\end{eqnarray} 
	for the generic fiber $X'_{y}$.	
	
	Take a generic regular value $y\in Y'$ of $f'$ such that the fiber $X'_{y}$  is reduced and smooth,  and both (\ref{multiplier 3}) and (\ref{multiplier 4}) are satisfied. We equip the line bundle $-m^2(K_{X'}+D'-F)-2f'^*A_{Y}+m^2 D'$ with the singular hermitian metric $h_0:=h_Ehh_{D'}$, where $h_E$ (resp. $h_{D'}$) is the tautological singular hermitian metric on $-a(K_{X'}+D'-F)-2f'^*A_{Y'}$ ( resp. $m^2D'$) induced by the effective divisor $E$ (resp. $m^2D'$), such that
$$
\ir \Theta_{h_E}=[E] (\mbox{ resp. } \ir \Theta_{h_{D'}}=m^2[D']).
$$ 
Then we claim that the multiplier ideal sheaf $\jc(h_{0 \upharpoonright X'_{y}}^{\frac{1}{m^2}})=\oc_{X'_{y}}$. Indeed, for any $s\in \oc_{X'_{y},z}$, let $\varphi_E$, $\varphi_{D'}$ and $\varphi$   be the weights of the metric $h_E$, $h_{D'}$ and $h$ on a small neighborhood $U\subset X'_{y}$ of a point $z\in X'_{y}$. Then by the H\"older inequality we have
$$
\int_{U}|s|^2e^{-\frac{\varphi_E+\varphi+\varphi_{D'}}{m^2}}\leq (\int_U |s|^2e^{-\varphi})^{\frac{1}{m^2}}\cdot (\int_U |s|^2e^{-\frac{\varphi_E}{m-1}})^{\frac{m-1}{m^2}}\cdot (\int_U |s|^2e^{-\frac{1}{m(m-1)}\varphi_{D'}})^{\frac{m-1}{m}}<+\infty.
$$	
	Here we use the conditions (\ref{multiplier 3}) and (\ref{multiplier 4}). By  applying Theorem \ref{extension2} to the surjective morphism $f':X'\rightarrow Y'$  with $L=-m^2(K_{X'}-F)-2f'^*A_{Y'}$ endowed with the singular hermitian metric $h_0$, we obtained the desired surjectivity:
	$$H^0 \big(X', m^2 K_{X'/Y'} + (-m^2K_{X'}-2f'^*A_{Y'}+m^2F) +f'^* A_{Y'}\big) \rightarrow H^0 \big(X'_{y} ,  (-m^2f'^*K_{Y'}-f'^*A_{Y'}+m^2F)_{\upharpoonright X'_{y}}\big)\neq 0$$
	for general fiber $X'_y$.
	In particular, we have the non-vanishing 
	$$
	H^0\big(X', f'^*(-m^2K_{Y'}-A_{Y'})+m^2F\big)\neq 0.
	$$
Since $X$ is normal with $F$ the exceptional divisors of the birational morphism $\pi:X'\rightarrow X$, the natural isomorphism
	$$
	H^0\big(X', f'^*(-m^2K_{Y'}-A_{Y'})\big)\xrightarrow{\cong} H^0\big(X', f'^*(-m^2K_{Y'}-A_{Y'})+m^2F\big) 
	$$
	is an isomorphism.
	 Take a Stein factorization of $f'$
	$$
	X'\xrightarrow{\tilde{f}}\tilde{Y}\xrightarrow{\mu}Y',
	$$
	where $\mu:\tilde{Y}\to Y'$ is a finite surjective morphism and the morphism $\tilde{f}:X'\rightarrow \tilde{Y}$ has connected fibers. Then we have an isomorphism
	$$
	\tilde{f}_*:H^0\big(X, f'^*(-m^2K_{Y'}-A_{Y'})\big)\xrightarrow{\cong} H^0\big(\tilde{Y},\mu^*(-m^2K_{Y'}-A_{Y'})\big),
	$$
	which implies that the line bundle $\mu^*(-m^2K_{Y'}-A_{Y'})$ is effective. Since $p:Y'\to Y$ is a finite surjective morphism, by a result of S. Boucksom \cite[Proposition 4.2]{Bou02}, $-m^2K_{Y'}-A_{Y'}$ is   a pseudo-effective line bundle. Take $l$ sufficiently large such that $lK_{Y'}$ is Cartier and there exists a section
	$$
	s\in H^0(Y',-lK_Y'-A_{Y'}).
	$$
	Since 
	$$
	-p^*(lK_Y)=-lK_{Y'}+E_1-E_2
	$$
	with both $E_1$ an $E_2$ effective and exceptional, by $A_{Y'}:=p^*A_Y-E_Y$ with $E_Y$ effective exceptional divisor,  one has
	$$
	s\cdot E_1 \in H^0\big(Y',	-p^*(lK_Y+A_Y)+E_2+E_Y\big),
	$$
which in turn gives rise to a non-zero section $s'\in H^0(Y,	-lK_Y-A_Y)$. Thus $-K_Y$ is big.
\end{proof}

\begin{proof}[Proof of Theorem \ref{RC}]	The proof is more or less direct. By  Claim \eqref{claimb} in Theorem \ref{main2} we see that $-K_Y$ is big. By Broustet-Pacienza's Theorem \cite[Theorem 1.2]{BrP11},  $Y$ is rationally connected modulo the non-nef locus ${\rm NNef}(-K_Y)$. The theorem is thus proved.
\end{proof}

\bigskip
\noindent
\textbf{Acknowledgements.}
The first version of this work was finished during my PhD studies. I would like to warmly thank my former  advisor Professor J.-P. Demailly for his numerous suggestions on this work, and also for his constant encouragements and supports. 
I thank Professors S\'ebastien Boucksom, Gianluca Pacienza and Mihnea Popa   for  the discussions and improvements they suggested. I also thank Junyan Cao for his interests and  kindly sharing his ideas to me,   Chen Jiang for the fruitful discussions and improvements he suggested, and Yajnaseni Dutta for the patient explanations of her work. The work is supported by ERC ALKAGE project and the postdoctoral project funded by Labex IRMIA.

\end{document}